\newtheorem{theorem}{Theorem}[section]
\newtheorem{lemma}[theorem]{Lemma}
\newtheorem{corollary}[theorem]{Corollary}
\theoremstyle{definition}
\newtheorem{definition}[theorem]{Definition}
\theoremstyle{remark}
\newtheorem{remark}[theorem]{Remark}
\numberwithin{equation}{section}
\newcommand{\norm}[1]{\lVert#1\rVert}
\newcommand{\cL}{\mathcal{L}}
\newcommand{\R}{\mathbb{R}}
\newcommand{\e}{\varepsilon}
\newcommand{\tdiv}{\text{div}}
\newcommand{\rdiv}{\text{\rm div}}
\newcommand{\rtxt}[1]{\text{\rm #1}}
\newcommand{\osc}[2]{\underset{#1}{\rtxt{osc}}\, [#2]}
\def\Xint#1{\mathchoice
  {\XXint\displaystyle\textstyle{#1}}%
  {\XXint\textstyle\scriptstyle{#1}}%
  {\XXint\scriptstyle\scriptscriptstyle{#1}}%
  {\XXint\scriptscriptstyle\scriptscriptstyle{#1}}%
  \!\int}
\def\XXint#1#2#3{{\setbox0=\hbox{$#1{#2#3}{\int}$}
  \vcenter{\hbox{$#2#3$}}\kern-.5\wd0}}
\def\average{\Xint-}
\begin{document}

\title{Periodic homogenization of Green's functions \\for Stokes systems}



\author{Shu Gu\qquad\qquad Jinping Zhuge}




\subjclass[2010]{35B27, 76M50, 35C20.}

\keywords{Green's Functions; Homogenization; Stokes System; Uniform Regularity}

\begin{abstract}

This paper is devoted to establishing the uniform estimates and asymptotic behaviors of the Green's functions $(G_\e,\Pi_\e)$ (and fundamental solutions $(\Gamma_\e, Q_\e)$) for the Stokes system with periodically oscillating coefficients (including a system of linear incompressible elasticity). Particular emphasis will be placed on the new oscillation estimates for the pressure component $\Pi_\e$.  Also, for the first time we prove the \textit{adjustable} uniform estimates (i.e., Lipschitz estimate for velocity and oscillation estimate for pressure) by making full use of the Green's functions. Via these estimates, we establish the asymptotic expansions of $G_\e,\nabla G_\e, \Pi_\e$ and more, with a tiny loss on the errors. Some estimates obtained in this paper are new even for Stokes system with constant coefficients, and possess potential applications in homogenization of Stokes or elasticity system.

\end{abstract}
\maketitle

\pagestyle{plain}
\tableofcontents

\section{Introduction and Main Results}

\setcounter{equation}{0}
\quad\  The primary purpose of this paper is to study the asymptotic behavior of the Green's functions and their derivatives for Stokes systems with rapidly oscillating periodic coefficients. Precisely, we consider the following Dirichlet problem for Stokes systems in a bounded domain $\Omega\subset\mathbb{R}^d,\, d\ge 3$,
\begin{equation}\label{def.Dirichlet.Stokes}
\left\{
\begin{aligned}
\mathcal{L}_\varepsilon(u_\varepsilon)+\nabla p_\varepsilon &= F + \text{div}(h) &\qquad \text{ in }\Omega,\\
\text{div}(u_\varepsilon) &=g &\qquad\text{ in }\Omega,\\
u_\varepsilon& =f &\qquad\text{ on }\partial\Omega,
\end{aligned}
\right.	
\end{equation}
with the compatibility condition
\begin{equation}\label{cond.compatibility}
\int_\Omega g -\int_{\partial \Omega}f\cdot n=0,
\end{equation}
where $n$ denotes the outward unit normal to $\partial \Omega$. The elliptic operator $\mathcal{L}_\varepsilon$ is defined by
\begin{equation}\label{def.operator}
\mathcal{L}_\varepsilon= -\text{div}(A(x/\varepsilon)\nabla)=-\frac{\partial}{\partial x_i}\bigg[a_{ij}^{\alpha\beta}\Big(\frac{x}{\varepsilon}\Big)\frac{\partial}{\partial x_j}\bigg],
\end{equation} where $1\le i,j,\alpha,\beta \le d$ (the Einstein's summation convention is used throughout) and $\e>0$ is assumed to be a small parameter. We assume that the coefficient matrix  $A(y)=(a_{ij}^{\alpha\beta}(y))$ is real and satisfies the following conditions:
\begin{itemize}
	\item Strong ellipticity: there exists some $\mu>0$ such that
	\begin{equation}\label{cond.ellipticity}
	\mu |\xi|^2 \le a_{ij}^{\alpha\beta}(y)\xi_i^\alpha\xi_j^\beta \le \frac{1}{\mu} |\xi|^2 \qquad \text{for }y\in \mathbb{R}^d \text{ and }\xi=(\xi_i^\alpha) \in \mathbb{R}^{d\times d}.
	\end{equation}
	
	\item Periodicity:
	\begin{equation}\label{cond.periodicity}
	A(y+z)=A(y) \qquad \text{ for }y\in \mathbb{R}^d\text{ and }z\in \mathbb{Z}^d.
	\end{equation}
	
	\item H\"{o}lder continuity: there exist $\lambda\in(0,1] $ and $\tau\ge 0$ such that
	\begin{equation}\label{cond.holder}
	|A(x)-A(y)|\le \tau|x-y|^\lambda.
	\end{equation}
\end{itemize}

The homogenization of system (\ref{def.Dirichlet.Stokes}) has been introduced and studied in  the remarkable monographs \cite{Lions} and \cite{JikovKozlovOleinik94}. The mechanics in the model (\ref{def.Dirichlet.Stokes}) may be interpreted as an approximation of the stationary Newtonian flow in a porous medium (e.g., sponge), with the understanding that viscosity varies spatially at $\e$-scale as the fluid contacts and infiltrates the medium.\footnote{An alternative model for such medium is the flow in perforated domains.} In this case, the vector function $u_\e = (u_\e^1,u_\e^2,\cdots, u_\e^d)$ is the velocity field in a fixed material body $\Omega$ and the scalar function $p_\e$ is the pressure. 

Another more practical problem related to (\ref{def.Dirichlet.Stokes}) is the linearized incompressible elasticity for composite materials (e.g., rubber), which can be described as (see \cite{AllaireGhoshVanninathan17,Briane03,Gurtin})
\begin{equation}\label{def.elasticity}
	\left\{
	\begin{aligned}
		-\text{div}\big( B(x/\e) E(v_\e) \big)+\nabla p_\varepsilon &= F &\qquad \text{ in }\Omega,\\
		\text{div}(v_\varepsilon) &=0 &\qquad\text{ in }\Omega,\\
		v_\varepsilon& =f &\qquad\text{ on }\partial\Omega,
	\end{aligned}
	\right.	
\end{equation}
where $v_\e$ is the displacement, $E(v_\e):=\frac{1}{2} (\nabla v_\e + (\nabla v_\e)^T )$ is the infinitesimal strain tensor and $p_\e$ is the effective pressure. 
Instead of the strong ellipticity condition (\ref{cond.ellipticity}), the coefficient tensor $B(y) = (b_{ij}^{\alpha\beta}(y))$ satisfies the elasticity condition
\begin{equation}\label{cond.elasticity}
	\left\{
	\begin{aligned}
		& b_{ij}^{\alpha\beta} = b_{ji}^{\beta\alpha} = b_{\alpha j}^{i\beta} \\
		& \mu |\xi|^2 \le b_{ij}^{\alpha\beta}(y)\xi_i^\alpha\xi_j^\beta \le \frac{1}{\mu} |\xi|^2 \qquad \text{for }y\in \mathbb{R}^d \text{ and symmetric }\xi=(\xi_i^\alpha) \in \mathbb{R}^{d\times d}.
	\end{aligned}
	\right.	
\end{equation}
For Dirichlet problem, by a well-known trick (see, e.g., \cite{ShenLec17}), system (\ref{def.elasticity}) can be reduced to (\ref{def.Dirichlet.Stokes}) without changing the solution. Actually, if we define $\widetilde{B} = (\tilde{b}_{ij}^{\alpha\beta})$ and
\begin{equation}
	\tilde{b}_{ij}^{\alpha\beta} = b_{ij}^{\alpha\beta} + \frac{\mu}{2}\delta_{i\alpha} \delta_{j\beta} - \frac{\mu}{2} \delta_{i\beta}\delta_{j\alpha},
\end{equation}
where $\delta_{i\alpha}$ is the Kronecker delta, then the solution of (\ref{def.elasticity}), $(v_\e, p_\e)$, satisfies
\begin{equation}
	-\text{div}\big( \widetilde{B}(x/\e) \nabla v_\e(x) \big)+\nabla p_\varepsilon(x) = F(x),
\end{equation}
and $\widetilde{B}$ satisfies the strong ellipticity condition (\ref{cond.ellipticity}) (possibly with a different ellipticity constant). Note that the above reduction will not change the (linearized) incompressibility condition and Dirichlet boundary value. Therefore, system (\ref{def.elasticity}) is reduced to (\ref{def.Dirichlet.Stokes}) and we only need to focus on the latter.

Recently, notable progress has been made towards the theory of convergence rates and uniform regularity in homogenization of Stokes system (\ref{def.Dirichlet.Stokes}); see \cite{GuShen15,Gu16,GuXu17,Xu17,AllaireGhoshVanninathan17,Briane03}. In the present paper, among others, we are particularly interested in the asymptotic behavior of the Green's functions and their derivatives for the Stokes systems. It is well-known that the estimates of Green's functions (or fundamental solutions) is a central problem in partial differential equations, as many properties of the solutions can be derived essentially from the Green's functions. Historically, the asymptotic expansions of the Green's functions (and fundamental solutions) for elliptic systems with $\cL_\e$ have been studied comprehensively. In \cite{Sevostjanova82} and \cite{terElstRobinsonSikora01}, the method of Bloch waves was used to study the asymptotic expansions of the fundamental solutions and heat kernels, respectively. In \cite{AL91}, the asymptotic expansions of the fundamental solutions for elliptic operator $\cL_\e$ were obtained via the uniform regularity theory established in \cite{AL8701,AL89}; and most recently, the results were extended to the higher order elliptic systems in \cite{NiuShenXu17} and to parabolic equations in \cite{GengShen1702}. For elliptic systems in a bounded domain, the asymptotic expansion for the Poisson kernel was obtained in \cite{AL8902}, using the Dirichlet correctors. Recently, C. E. Kenig, F. Lin and Z. Shen carried out a comprehensive study in \cite{KenigLinShen14} on the asymptotic expansions for both the Green's functions and Neumann functions. These motivate us to investigate the Green's functions for the Stokes systems.

To describe our main theorem, we introduce the definition of the Green's functions, which was proposed, for example, in \cite{ChoiLee15} for Stokes system with variable coefficients (adapted in our situation with $\e$).

\begin{definition}\label{def.GreenFunc}
	We call a pair $(G_\e(x,y), \Pi_\e(x,y)) = (G_\e^{\beta}(x,y), \Pi_\e^\beta(x,y))_{1\le \beta\le d}$ the Green's functions for Stokes system (\ref{def.Dirichlet.Stokes}), if it satisfies the following properties:
	
	\begin{enumerate}
	\item[(i)] For each $y \in \Omega$, $G_\e(\cdot,y) \in W^{1,1}_0(\Omega;\R^{d\times d}) \cap H^2_{\rtxt{loc}}(\Omega\setminus\{y\}; \R^{d\times d})$ and $\Pi_\e(\cdot,y) \in L_0^1(\Omega;\R^d)\cap L^2_{\rtxt{loc}}(\Omega\setminus\{y\};\R^d)$.
	
	\item[(ii)] For each $y$, $(G_\e(\cdot,y), \Pi_\e(\cdot,y))$ satisfies the following system in the sense of distribution,
	\begin{equation}\label{def.Green}
	\left\{
	\begin{aligned}
	\cL_\e(G_\e(\cdot,y))+\nabla \Pi_\e(\cdot,y) &= \delta_y I \qquad &\text{ in }& \Omega,\\
	\text{div}(G_\e(\cdot,y)) &= 0 \qquad & \text{ in }& \Omega,\\
	G_\e(\cdot,y)& = 0 \qquad & \text{ on }&\partial\Omega.
	\end{aligned}
	\right.	
	\end{equation}
	
	\item[(iii)] If $(u_\e,p_\e)\in H^1_0(\Omega;\R^d)\times L^2_0(\Omega)$ is the weak solution of
	\begin{equation}
	\left\{
	\begin{aligned}
	\cL_\e^*(u_\e)+\nabla p_\e &= F + \rtxt{div}(h) \qquad &\text{ in }& \Omega,\\
	\text{div}(u_\e) &= g \qquad & \text{ in }& \Omega,\\
	u_\e & = 0 \qquad & \text{ on }&\partial\Omega,
	\end{aligned}
	\right.	
	\end{equation}
	with smooth $F,h$ and $g$ satisfying $\int_{\Omega} g = 0$, then
	\begin{equation}\label{eq.ueLe*}
	u_\e(x) = \int_{\Omega} G_\e(y,x)^T F(y) dy - \int_{\Omega} \nabla G_\e(y,x)^T h(y) dy - \int_{\Omega} \Pi_\e(x,y) g(y) dy.
	\end{equation}
\end{enumerate}
\end{definition}
Similarly, we can define the adjoint Green's functions, denoted by $(G_\e^*(x,y),\Pi_\e^*(x,y))$, for the adjoint Stokes system (replacing $\cL_\e$ by $\cL_\e^*$ in (\ref{def.Dirichlet.Stokes})). We also use $(G_0(x,y), \Pi_0(x,y))$ to denote the Green's functions for the homogenized Stokes system; see (\ref{def.Dirichlet.Stokes0}).

Let $P_j^\beta(x)=x_j e^\beta$ for $1\le j,\beta\le d$ and $e^\beta$ be the $\beta$th unit coordinate vector. We define the Dirichlet correctors $(\Phi_{\varepsilon,j}^\beta,\Lambda_{\varepsilon,j}^\beta)$ as the solution of the following system
\begin{equation}\label{def.Dirichlet-corrector}
\left\{
\begin{aligned}
\mathcal{L}_\varepsilon(\Phi_{\varepsilon,j}^\beta)+\nabla \Lambda_{\varepsilon,j}^\beta &=0  &\qquad \text{ in }\Omega,\\
\text{div}(\Phi_{\varepsilon,j}^\beta) &=\text{div}(P_j^\beta) &\qquad\text{ in }\Omega,\\
\Phi_{\varepsilon,j}^\beta & =P_j^\beta &\qquad\text{ on }\partial\Omega.
\end{aligned}
\right.	
\end{equation}
Recall that the Dirichlet correctors were first introduced in \cite{AL8701} for elliptic systems as a replacement of the usual correctors (see (\ref{def.corrector})), in order to modify the boundary effect. Our Dirichlet correctors invented for Stokes systems serve the same role.

The following are the main results of the paper.

\begin{theorem}\label{thm.main}
	Let $A$ satisfy (\ref{cond.ellipticity}), (\ref{cond.periodicity}) and (\ref{cond.holder}). Then
	\begin{enumerate}
	\item[(i)] If $\Omega$ is a bounded $C^{1,1}$ domain, then for any $x,y\in \Omega$,
	\begin{equation}\label{est.mainG}
	|G_\e(x,y) - G_0(x,y)| \le \frac{C\e}{|x-y|^{d-1}}.
	\end{equation}
	
	\item[(ii)] If $\Omega$ is a bounded $C^{2,\eta}$ domain for some $\eta\in (0,1)$, then for any $x,y\in \Omega$,
	\begin{equation}\label{est.mainDG}
	\begin{aligned}
	\left|\frac{\partial}{\partial x_i}\{G_\varepsilon^{\alpha\beta}(x,y)\}-\frac{\partial}{\partial x_i}\{\Phi_{\varepsilon,j}^{\alpha\gamma}(x)\}\cdot\frac{\partial}{\partial x_j}\{G_0^{\gamma\beta}(x,y)\}\right| \qquad \qquad \quad \\
	\le \frac{C\varepsilon(\ln[\varepsilon^{-1}|x-y|+2])^2}{|x-y|^d},
	\end{aligned}
	\end{equation}
	and for any $x,z,y\in \Omega$,
	\begin{equation}\label{est.mainPi}
	\begin{aligned}
	&\bigg|\Big[\Pi^\beta_\varepsilon(x,y)-\Pi^\beta_0(x,y)-\Lambda_{\varepsilon,j}^\gamma(x)\frac{\partial}{\partial x_j}\{G^{\gamma\beta}_0(x,y)\}\Big]\\
	&\qquad\qquad-\Big[\Pi^\beta_\varepsilon(z,y)-\Pi^\beta_0(z,y)-\Lambda_{\varepsilon,j}^\gamma(z)\frac{\partial}{\partial z_j}\{G^{\gamma\beta}_0(z,y)\}\Big]\bigg|\\
	&\qquad\le \frac{C\varepsilon(\ln[\varepsilon^{-1}|x-y|+2])^2}{|x-y|^d}+\frac{C\varepsilon(\ln[\varepsilon^{-1}|z-y|+2])^2}{|z-y|^d},
	\end{aligned}
	\end{equation}
	where $\Lambda_{\varepsilon,j}^\gamma$ is uniquely specified by $\Lambda_{\varepsilon,j}^\gamma(x_0) = \chi_j^\gamma(x_0/\e)$ for some fixed $x_0\in \Omega$ (see Lemma \ref{lem.Dirichlet-corrector.estimate}). The constant $C$ depends only on $d,m,A$ and $\Omega$.
	\end{enumerate}

\end{theorem}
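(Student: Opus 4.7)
The plan is to adapt the Avellaneda--Lin / Kenig--Lin--Shen duality framework to the Stokes setting, using the uniform Lipschitz estimate for the velocity component together with the new pressure oscillation estimates (and their \emph{adjustable} versions) established in the earlier sections of the paper, plus standard convergence-rate bounds for the Dirichlet problem.

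For part (i), I fix $x_0, y_0 \in \Omega$ and set $r = |x_0 - y_0|$. The case $r \le C\e$ is immediate from the a priori size bound $|G_\e| + |G_0| \le C|x-y|^{2-d}$, so I assume $r \ge C\e$. Choose $F \in C_c^\infty(B(x_0, r/8); \R^d)$ with $\int F\, dx = e^\alpha$ and $\|F\|_{L^\infty} \le C r^{-d}$, and let $(u_\e, p_\e)$ solve the adjoint Stokes problem with source $F$, with homogenized counterpart $(u_0, p_0)$. Formula (\ref{eq.ueLe*}) then reads
\begin{equation*}
u_\e(y_0) - u_0(y_0) = \int_\Omega \bigl(G_\e(x, y_0) - G_0(x, y_0)\bigr)^T F(x)\, dx,
\end{equation*}
so concentrating $F$ at $x_0$ reduces (\ref{est.mainG}) to the pointwise bound $|u_\e(y_0) - u_0(y_0)| \le C\e\, r^{1-d}$. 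This in turn follows by introducing the two-scale error $w_\e = u_\e - u_0 - \e\chi(\cdot/\e)\nabla u_0$ on $B(y_0, r/2)$, controlling $\|w_\e\|_{L^\infty(B(y_0, r/4))}$ via an $L^2$ convergence estimate on the surrounding ball combined with the uniform Lipschitz regularity of $u_\e$, and observing that the discarded corrector contribution is itself of size $\e|\nabla u_0(y_0)| \lesssim \e\, r^{1-d}$.

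For part (ii), the pointwise behavior of $\nabla_x G_\e$ near $\partial\Omega$ must be captured using the Dirichlet correctors $\Phi_{\e,j}^\beta$ rather than the plain correctors $\chi_j^\beta$. Direct differentiation of the identity in the previous step is not legitimate; instead I take a source of the form $F = \rtxt{div}(h)$ with $h$ smooth, supported in $B(x_0, r/8)$ and of size $r^{-d}$, so that (\ref{eq.ueLe*}) provides a representation of $\int_\Omega \nabla G_\e(x, y_0)^T h(x)\, dx$. Comparing this with the corresponding expression for $\nabla \Phi_{\e,j}^{\alpha\gamma}(\cdot)\,\partial_{x_j} G_0^{\gamma\beta}(\cdot, y_0)$ via the adjustable Lipschitz estimate applied to the difference between $u_\e$ and its two-scale approximation $\Phi_{\e,j}^{\alpha\gamma}\,\partial_j u_0^\gamma$, integrating by parts, and summing dyadically in scales between $\e$ and $r$ produces the claimed bound. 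One factor of $\log[\e^{-1}r + 2]$ arises from the dyadic summation, and the second comes from a Meyers-type $L^p$ improvement used to absorb a lower-order error.

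For part (iii), $\Pi_\e(\cdot, y)$ is only determined up to an additive constant, so the estimate is phrased as an oscillation between two base points $x$ and $z$. Running the analogue of the previous step but reading off the pressure component of the adjoint solution yields an identity controlling $q_\e(x_0) - q_\e(z_0)$ by $\int_\Omega \bigl(\Pi_\e(x_0, y) - \Pi_\e(z_0, y)\bigr)\,\rtxt{div}(h)(y)\,dy$ plus lower-order terms. The two-scale expansion of the pressure takes the form $\Pi_\e(x, y) \approx \Pi_0(x, y) + \Lambda_{\e,j}^\gamma(x)\,\partial_{x_j} G_0^{\gamma\beta}(x, y)$, with $\Lambda_\e$ normalized at the fixed point $x_0$ exactly as in the theorem's hypothesis. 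Inserting this expansion and invoking the new adjustable pressure oscillation estimate then yields (\ref{est.mainPi}) after another dyadic summation. The hardest step throughout is the pressure: there is no pointwise regularity at the level of $\Pi_\e$, only oscillation control, so matching the additive ambiguities of $\Pi_\e$ and $\Lambda_\e$ consistently across the duality identity while preserving the polylogarithmic rate is the delicate point, and is precisely what forces the use of the \emph{adjustable} oscillation estimates advertised in the abstract.
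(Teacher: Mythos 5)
Your overall strategy (duality plus two-scale expansion plus the adjustable estimates) is in the right family, but as written it has genuine gaps at the decisive steps. For part (i): with $F$ supported in $B(x_0,r/8)$, $\int F=e^\alpha$, $\|F\|_{L^\infty}\le Cr^{-d}$, the identity from (\ref{eq.ueLe*}) only controls a weighted \emph{average} of $G_\e(\cdot,y_0)-G_0(\cdot,y_0)$ over a ball of radius comparable to $r$; ``concentrating $F$ at $x_0$'' does not produce the pointwise bound, because shrinking the support to scale $\rho\ll r$ forces $\|F\|_{L^p}$ (and hence $\|\nabla^2 u_0\|_{L^p}$, $\|\nabla u_0\|_{L^\infty}$, and any $L^2$ convergence-rate input you invoke for $w_\e$) to blow up, and the uniform Lipschitz bound $|\nabla_x G_\e|\le C r^{1-d}$ is far too weak to upgrade an average over a scale-$r$ ball to a pointwise value with error $O(\e r^{1-d})$. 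The missing idea is precisely the second homogenization step in the paper: after duality yields $(\average_{D_r(y_0)}|G_\e(x_0,y)-G_0(x_0,y)|^{p'}dy)^{1/p'}\le C\e r^{1-d}$, one applies the local $L^\infty$ two-scale lemma (Lemma \ref{lem.ue-u0.infty}) to the pair $G_\e(x_0,\cdot)^T$, $G_0(x_0,\cdot)^T$, which solve the adjoint $\e$- and homogenized systems with zero data near $y_0$, to convert the averaged bound into the pointwise one. Without this (or its mirror image in the $x$-variable) your argument stops at an averaged estimate.

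For parts (ii)--(iii) the same issue recurs and the mechanism you describe does not match what actually closes the argument. Testing against $F=\rtxt{div}(h)$ again only gives averaged information on $\nabla_x G_\e$; the paper instead takes $(u_\e,p_\e)=(G_\e(\cdot,y_0),\Pi_\e(\cdot,y_0))$ directly, forms $(w_\e,\tau_\e)$ with the Dirichlet correctors via Lemma \ref{lem.w-system}, corrects the divergence data by Theorem \ref{thm.divergence.modifiedC1}, and applies the adjustable estimate of Theorem \ref{thm.modified-boundary-estimate} with $t=\e$, using part (i) to control the term $\frac1r\average_{D_{5r}}|u_\e-u_0|$ and Lemma \ref{lem.Dirichlet-corrector.estimate} for the corrector bounds. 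Your accounting of the two logarithms is incorrect: one $\ln[\e^{-1}r+2]$ comes from the adjustable estimate itself (the $L^\infty$ terms of $h_\e$ and $\mathcal{M}_{D_{5r},\e}(F_\e\delta)$), and the second comes from the divergence correction $W$ in (\ref{ineq.W}) feeding into the $A^\e\nabla W$ term; there is no dyadic-summation-plus-Meyers mechanism, and invoking a Meyers improvement here has no basis. Finally, for (\ref{est.mainPi}) you only sketch control of the pressure difference at two specific points at distance $\sim r$ from $y$; passing to arbitrary $x,z\in\Omega$ requires the covering/chaining argument through balls avoiding the singularity (as in Theorem \ref{thm.green-function.pointwise}), which your proposal does not supply. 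The normalization of $\Lambda_\e$ at an interior point is stated correctly, but it is the $\e$-oscillatory nature of the data $F_\e,h_\e,g_\e$, not the matching of additive constants, that forces the adjustable estimates.
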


In the above theorem, estimates (\ref{est.mainG}) and (\ref{est.mainDG}) should be compared to the corresponding estimates for the Green's functions of elliptic system in \cite[Theorem 1.1]{KenigLinShen14}, whereas estimate (\ref{est.mainPi}) is completely new and exhibits distinct feature of the Stokes system. It is worth noting that the left-hand side of (\ref{est.mainPi}) adopts a form of difference in the first variable. In addition, by integrating (\ref{est.mainPi}) with respect to $z$ and using the normalized assumption $\int_{\Omega} \Pi_\e(z,y)\, dz = 0$, one can show an expansion in non-difference form with slightly worse error (see Corollary \ref{coro.PiExp}))
\begin{equation}\label{est.mainPi1}
\begin{aligned}
\bigg|\Pi_\varepsilon(x,y)-\Pi_0(x,y)-\Big[\Lambda_{\varepsilon,j}^\beta(x)\frac{\partial}{\partial x_j}\{G^{\beta}_0(x,y)\}&-\average_{\Omega}\Lambda_{\varepsilon,j}^\beta(z)\frac{\partial}{\partial z_j}\{G^{\beta}_0(z,y)\} dz\Big]\bigg|\\
&\le \frac{C\varepsilon(\ln[\varepsilon^{-1}|x-y|+2])^3}{|x-y|^d}.
\end{aligned}
\end{equation}

We point out in advance that the proof of (\ref{est.mainG}) follows the same idea in \cite{KenigLinShen14}; yet a very different and more difficult part for Stokes systems is the strategy to deal with (\ref{est.mainDG}) and (\ref{est.mainPi}), beacuse the pressure is inevitably involved. Following the proof of Theorem \ref{thm.main}, the asymptotic expansions for $\nabla_y \nabla_x G_\e(x,y)$ and $\nabla_y \Pi_\e(x,y)$ are established in Theorem \ref{thm.asymp-expansion.ddGe}, and the corresponding estimates and expansions for the fundamental solutions $(\Gamma_\e,Q_\e)$ of Stokes systems are presented in Theorem \ref{thm.FundSol} and Theorem \ref{thm.Fund.Exp}.

As aforementioned, the estimates of the Green's functions (or fundamental solutions) have various crucial applications. A typical application of Theorem \ref{thm.main} is the rates of convergence in homogenization theory. Let $(u_\e,p_\e)$ be the solution of (\ref{def.Dirichlet.Stokes}) with $h = 0, \, g = 0$ and $f = 0$ and $(u_0,p_0)$ be the corresponding homogenized solution, then it follows from (\ref{est.mainG}), (\ref{est.mainDG}),  (\ref{est.mainPi1}) and the solution representation that
\begin{equation*}
\norm{u_\e - u_0}_{L^q(\Omega)} \le C \e \norm{F}_{L^p(\Omega)},
\end{equation*}
for $1\le p<d$ and $1/q=1/p-1/d$, or $p>d$ and $q=\infty$, and
\begin{equation*}
\begin{aligned}
\norm{u_\e - u_0 - (\Phi_{\e,j}^\beta - P_j^\beta ) \frac{\partial u_0^\beta}{\partial x_j}}_{W^{1,p}_0(\Omega)} 
& + \norm{p_\e - p_0 - \Lambda_{\e,j}^\beta \frac{\partial u_0^\beta}{\partial x_j}}_{L^p_0(\Omega)} \\
& \qquad \le C\e (\ln(\e^{-1} R_0 + 2))^{8|\frac{1}{2}-\frac{1}{p}|} \norm{F}_{L^p(\Omega)},
\end{aligned}
\end{equation*}
for any $1\le p\le \infty$. See Theorem \ref{thm.convergence-rates.Lq} and Theorem \ref{thm.convergence-rates.W1p} for more details. 

Beyond the straightforward application in the rates of convergence, many other significant applications may be found in the literature, such as $L^p$ boundedness of Riesz transform \cite{AL91,terElstRobinsonSikora01}, asymptotic expansion of the Dirichlet-to-Neumann map \cite{KenigLinShen14}, layer potential methods \cite{KenigShen1101}, quantitative analysis of boundary layers \cite{Gerard-VaretMasmoudi12,ArmstrongKuusiMourratPrange17,ShenZhuge1702,Zhuge16}, etc. We plan to conduct research on some of these topics in other lines in the future.

We now describe the outline of the proof of Theorem \ref{thm.main} and explain some new ideas in the proof. As we know, the existence and estimates of the Green's functions essentially rely on the uniform regularity estimates for the solutions of (\ref{def.Dirichlet.Stokes}). We mention that the interior uniform Lipschitz estimate for velocity and $L^\infty$ estimate for pressure were established by the first author of this paper and Z. Shen in \cite{GuShen15}. More recently, the boundary uniform estimates were obtained by the first author of this paper and Q. Xu in \cite{GuXu17} with data in certain spaces. Precisely, they proved that for any $x\in \Omega$,
\begin{equation}\label{est.oldunif}
\begin{aligned}
&|\nabla u_\e(x)| + \bigg|p_\e(x) - \average_{D_1(x)} p_\e(y) dy \bigg| \\
&\qquad \le C\Big\{ \norm{u_\e}_{L^2(D_2(x))} + \norm{h}_{W^{1,p}(D_2(x))} + \norm{g}_{W^{1,p}(D_2(x))} + \norm{f}_{C^{1,\eta}(\Delta_2(x))} \Big\},
\end{aligned}
\end{equation}
where $D_r(x) = B_r(x) \cap \Omega, \Delta_r = B_r(x) \cap \partial\Omega$ and $\average_E$ denotes the average integral over $E$. In this paper, we first improve the estimate above by assuming the minimal regularity for the data (which is required even for the systems with constant coefficients), i.e., $F \in L^p(\Omega;\R^d)$ for some $p>d$, $h\in C^{0,\eta}(\Omega;\R^{d\times d}), g\in C^{0,\eta}(\Omega)$  and $f\in C^{1,\eta}(\partial\Omega;\R^d)$ for some $\eta \in (0,1)$. Actually, in Theorem \ref{thm.boundary-estimate}, we show that for any $0<r\le \text{diam}(\Omega)$,
\begin{align}\label{est.unif}
\begin{aligned}
&\norm{\nabla u_\e}_{L^\infty(D_r)} + \osc{D_r}{p_\e}\\
&\qquad \le C \bigg\{ \frac{1}{r} \bigg( \average_{D_{4r} } |u_\e|^2 \bigg)^{1/2} + r\bigg( \average_{D_{4r} } |F|^p \bigg)^{1/p}+  r^\eta [h]_{C^{0,\eta}(D_{4r})}+ \norm{g}_{L^\infty(D_{4r})}\\ 
&\qquad \qquad   +  r^\eta [g]_{C^{0,\eta}(D_{4r})}+ \frac{1}{r}\norm{f}_{L^\infty(\Delta_{4r})} + \norm{\nabla_{\rtxt{tan}} f}_{L^\infty(\Delta_{4r})} + r^\eta [\nabla_{\rtxt{tan}} f]_{C^{0,\eta}(\Delta_{4r})} \bigg\},
\end{aligned}
\end{align}
where $\text{osc}_E[v] = \text{esssup}_{x,y\in E}|v(x) - v(y)| $ denotes the maximal oscillation of $v$ on the set $E$. We point out that estimate (\ref{est.unif}) is an improved version of (\ref{est.oldunif}). Apart from the obvious improvement on the regularity of $h$ and $g$, the novelty of (\ref{est.unif}) is the uniform oscillation estimate of $p_\e$, which has apparent meaning in physics, i.e., the pressure difference. More importantly, it allows us to control the pressure difference for any two points in the material body $\Omega$ by connecting the points through a sequence of balls (avoiding the singular point when dealing with the Green's functions). This new idea of concerning the pressure difference via uniform oscillation estimate plays a crucial role in several places of this paper and goes a long way to explain why the right scheme of the asymptotic expansion (\ref{est.mainPi}) should be in form of difference. We mention that the estimate of velocity $u_\e$ in (\ref{est.unif}) follows from the line of \cite{Shen17}, while the oscillation estimate of $p_\e$ absorbs some useful ideas in \cite{GuXu17}.

The uniform estimate (\ref{est.unif}), together with the result of \cite{ChoiLee15}, guarantees the existence of the Green's functions in a bounded $C^{1,\eta}$ domain. Meanwhile, by the same method for elliptic systems, it is not hard to establish the uniform global pointwise estimates for $G_\e(x,y)$ and its derivatives, which are exactly the same as the Green's functions of elliptic systems; see Theorem \ref{thm.green-function.pointwise}, (i) - (iii). On the other hand, by the new idea mentioned before regarding the pressure difference, we are able to show the uniform oscillation estimate for $\Pi_\e(x,y)$, i.e., for any $y\in \Omega, r>0$
\begin{equation}\label{est.PieOsc}
\osc{\Omega \setminus B(y,r)}{\Pi_\e(\cdot,y)} \le C\min\bigg\{\frac{\delta(y)}{r^{d}},\, \frac{1}{r^{d-1}} \bigg\},
\end{equation}
where $\delta(y) = \text{dist}(y,\partial\Omega)$. Obviously, the above estimate can be written equivalently in a form of difference, which can be compared, as we expected, with the estimate of $\nabla_x G_\e(x,y)$. Similarly, we also prove that for any $y\in \Omega, r>0$
\begin{equation}\label{est.DPieOsc}
\osc{\Omega \setminus B(y,r)}{\nabla_y \Pi_\e(\cdot,y)} \le \frac{C}{r^d},
\end{equation}
which can be compared with the estimate of $\nabla_x \nabla_y G_\e(x,y)$. We point out that there is no similar estimate for $\nabla_x \Pi_\e(x,y)$, since it behaves more like $\nabla_x^2 G_\e(x,y)$, which does not possess any uniform estimate. As far as we know, (\ref{est.PieOsc}) and (\ref{est.DPieOsc}) are new and will play a significant role in the proof of our main theorem.

Now we would like to describe the core ideas of the proof of Theorem \ref{thm.main}. The estimate (\ref{est.mainG}) is obtained through the Miranda-Agmon maximum principle and a similar approach of \cite{KenigLinShen14} with substantial modifications according to the Stokes systems. The Miranda-Agmon maximum principle for Stokes system is established in Theorem \ref{thm.Lpqr} with the help of the uniform estimates for the Green's functions. However, the proof of (\ref{est.mainDG}) and (\ref{est.mainPi}) is much more involved. The key of the proof is the following adjustable uniform estimates, which seems new even for elliptic systems with constant coefficients.

\begin{theorem}\label{thm.modified-boundary-estimate}
	Assume $\Omega$ is a bounded $C^{1,\eta}$ domain. Let $(u_\varepsilon,p_\varepsilon)$ be a solution of 
	\begin{equation}\label{def.general-system-e-4r.f=g=0}
	\left\{
	\begin{aligned}
	\cL_\e(u_\e)+\nabla p_\e &= F+\rdiv{(h)} &\qquad \text{ in }D_{5r},\\
	\rdiv(u_\e) &=0 &\qquad\text{ in }D_{5r},\\
	u_\e& =0 &\qquad\text{ on }\Delta_{5r}.
	\end{aligned}
	\right.	
	\end{equation}
	Then for any $0<t\le r \ (\le \rtxt{diam}(\Omega))$,
	\begin{equation}\label{ineq.modified-boundary-estimate}
	\begin{aligned}
	\norm{\nabla u_\e}_{L^\infty(D_r)} +\osc{D_{r}}{p_\varepsilon} & \le C \bigg\{ \frac{1}{r} \average_{D_{5r} } |u_\e| +\ln[t^{-1}r+2]\|\mathcal{M}_{D_{5r},t}(F\delta)\|_{L^\infty(D_{5r})} \\  
	&\quad +t\bigg(\average_{D_{5r}}|F|^p\bigg)^{1/p}+\ln[t^{-1}r+2]\|h\|_{L^\infty(D_{5r})}+t^{\eta}[h]_{C^{0,\eta}(D_{5r})}\bigg\},
	\end{aligned}
	\end{equation}
	where $\delta(x)=\text{\rm dist}(x,\partial D_{5r})$, and $\mathcal{M}_{E,t}(\varphi)$ is the truncated Hardy-Littlewood maximal function defined by
	\begin{equation}\label{def.truncated-Hardy-Littlewood}
	\mathcal{M}_{E,t}(\varphi)(x)=\sup_{s>t}\average_{B(x,s)\cap E} |\varphi|,
	\end{equation}
	and the constant $C$ above depends only on  $d, \,\eta,\, A$ and $\Omega$.
\end{theorem}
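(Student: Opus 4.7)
The plan is to derive the adjustable estimate (\ref{ineq.modified-boundary-estimate}) by combining the baseline Lipschitz-and-oscillation estimate (\ref{est.unif}) applied at the \emph{fine} scale $t$ with the Green's function representation (\ref{eq.ueLe*}) on $\Omega$, the latter used to control the contribution of forcing at intermediate scales between $t$ and $r$. The logarithmic factor $\ln[t^{-1}r+2]$ will emerge from summing a Green's-function integral over dyadic annuli going from scale $t$ to scale $r$.

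Concretely, for $x_0\in D_r$ introduce a smooth cutoff $\chi\in C_c^\infty(B(x_0,2t))$ with $\chi\equiv 1$ on $B(x_0,t)$ and $|\nabla\chi|\le C/t$, and split the data as $F=F\chi+F(1-\chi)$, $h=h\chi+h(1-\chi)$. Let $(v,q)$ solve the Stokes system on $\Omega$ with forcing $(F\chi, h\chi)$ and zero Dirichlet data, and set $(w,\pi)=(u_\e-v,\,p_\e-q)$, so $(w,\pi)$ has forcing supported in $\Omega\setminus B(x_0,t)$. For $(v,q)$, one applies (\ref{est.unif}) on $D_t(x_0)$; the localized forcing gives the terms $t(\average_{D_{5r}}|F|^p)^{1/p}$ and $t^\eta[h]_{C^{0,\eta}(D_{5r})}$, while the bound $[h\chi]_{C^{0,\eta}}\le C(\|h\|_{L^\infty}t^{-\eta}+[h]_{C^{0,\eta}})$ also produces a $\|h\|_{L^\infty}$ contribution, and the $L^2$-average of $v$ is absorbed via a standard Stokes energy estimate against the compactly supported forcing.

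For $(w,\pi)$, whose forcing vanishes on $B(x_0,t)$, I would use (\ref{eq.ueLe*}) on $\Omega$ to write
\begin{equation*}
\nabla w(x_0) = \int_{\Omega}\nabla_{x}G_\e(x_0,z)\,F(z)(1-\chi(z))\,dz \;-\; \int_{\Omega}\nabla_{x}\nabla_{z}G_\e(x_0,z)\,h(z)(1-\chi(z))\,dz,
\end{equation*}
and decompose the integration region into dyadic annuli $A_k=\{2^k t<|x_0-z|\le 2^{k+1}t\}$, $k=0,\ldots,K$, with $K\sim\log_2(r/t)$. Using the boundary-improved pointwise bound $|\nabla_{x}G_\e(x_0,z)|\le C\delta(z)/|x_0-z|^d$ (established via the first author's earlier work and the $C^{1,1}$ hypothesis), the $F$-integral on $A_k$ is dominated by $C\average_{B(x_0,2^{k+1}t)\cap\Omega}|F|\delta \le C\|\mathcal{M}_{D_{5r},t}(F\delta)\|_{L^\infty(D_{5r})}$; summing over $k$ produces the $\ln[t^{-1}r+2]$ factor. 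The $h$-part is handled analogously using $|\nabla_{x}\nabla_{z}G_\e(x_0,z)|\le C/|x_0-z|^d$, which yields the $\ln[t^{-1}r+2]\|h\|_{L^\infty}$ term (the Hölder remainder having already been absorbed into the $(v,q)$ estimate).

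The main obstacle is the pressure oscillation: unlike $\nabla u_\e$, the pressure admits no scale-invariant pointwise bound via $\Pi_\e$, so the above scheme cannot be applied directly to $\pi(x_1)$ or $\pi(x_2)$ separately. Instead, one has to keep everything in difference form, expressing $\pi(x_1)-\pi(x_2)$ via (\ref{eq.ueLe*}) and using the oscillation estimates (\ref{est.PieOsc}) and (\ref{est.DPieOsc}) for $\Pi_\e(\cdot,z)$ and $\nabla_z\Pi_\e(\cdot,z)$ — these being precisely the difference-style estimates that play the role of $|\nabla_x G_\e|\le C\delta/|x_0-z|^d$ and $|\nabla_x\nabla_z G_\e|\le C/|x_0-z|^d$ above — to run the dyadic annular decomposition for $\osc{D_r}{\pi}$ in exact parallel with the one for $\nabla w$. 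This is the technical crux of the proof, and it is exactly what the paper's new pressure-oscillation estimates are designed to enable.
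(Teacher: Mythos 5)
Your scheme has two genuine gaps. First, the representation step: you apply (\ref{eq.ueLe*}) on $\Omega$ to $w=u_\e-v$, but $(u_\e,p_\e)$ is only given as a solution in $D_{5r}$ with zero data on $\Delta_{5r}$; $w$ is neither defined on all of $\Omega$ nor in $H^1_0(\Omega;\R^d)$, so the global Green's function formula is not available for it, and the contribution of the unknown values of $u_\e$ on the lateral boundary $\partial B_{5r}\cap\Omega$ is never accounted for (this is exactly the part that produces the term $\frac{1}{r}\average_{D_{5r}}|u_\e|$ in (\ref{ineq.modified-boundary-estimate}), which your argument cannot generate). The paper instead fixes an intermediate $C^{1,\eta}$ domain $\widetilde{D}$ with $D_{4r}\subset\widetilde{D}\subset D_{5r}$, splits $u_\e=u_\e^{(1)}+u_\e^{(2)}$ with $u_\e^{(1)}$ carrying the lateral data (estimated by Theorem \ref{thm.boundary-estimate} and a convexity argument) and represents only $(u_\e^{(2)},p_\e^{(2)})$ through the Green's functions $(\widetilde{G}_\e,\widetilde{\Pi}_\e)$ of $\widetilde{D}$. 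The localization also matters quantitatively: the $\delta$ in the statement is $\rtxt{dist}(\cdot,\partial D_{5r})$, whereas the kernel bound for the Green's function of $\Omega$ only involves $\rtxt{dist}(\cdot,\partial\Omega)$, which can be far larger (e.g.\ when $B_{5r}(x_0)$ lies deep inside $\Omega$); so even granting your formula, the dyadic-annulus computation yields a weaker inequality than the one claimed. These defects are repairable: once you localize to $\widetilde{D}$, your treatment of $\nabla u_\e$ (cut the data at scale $t$, use $|\nabla_x\widetilde{G}_\e(x,y)|\le C\delta(y)|x-y|^{-d}$ and $|\nabla_x\nabla_y\widetilde{G}_\e(x,y)|\le C|x-y|^{-d}$, sum over annuli from $t$ to $r$) is essentially the paper's Part (i).

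The unrepaired gap is the pressure. You assert that the oscillation of $p_\e$ can be handled by running the dyadic decomposition ``in exact parallel'' using (\ref{est.PieOsc}) and (\ref{est.DPieOsc}); it cannot, for the $\rtxt{div}(h)$ part. In difference form that term has kernel $\nabla_y\widetilde{\Pi}_\e(x,y)-\nabla_y\widetilde{\Pi}_\e(z,y)$, bounded only by $C(|x-y|^{-d}+|z-y|^{-d})$, i.e.\ with non-integrable singularities at \emph{both} base points; a cutoff centered at one point $x_0$, or subtracting the single value $h(x)$ as in the velocity case, tames only one of them, and since $x,z$ range over all of $D_r$ no one-point regularization works. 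The paper needs an additional idea here: the two-point interpolant $\zeta^{x,z}_h(y)=h(x)|z-y|^\eta|x-z|^{-\eta}+h(z)|x-y|^\eta|x-z|^{-\eta}$, which agrees with $h$ at both $x$ and $z$, together with an integration by parts moving a derivative from $\widetilde{\Pi}_\e$ onto $\zeta^{x,z}_h$ (using that $\widetilde{\Pi}_\e(x,\cdot)$ vanishes on $\partial\widetilde{D}$, Remark \ref{rmk.Pi.Bdry}), the bounds (\ref{prop.zeta.Linfty})--(\ref{prop.zeta-h}), and a reduction to the case $|x-z|\ge c_1 r$ via an intermediate point. Without this device (or an equivalent mechanism), the pressure half of (\ref{ineq.modified-boundary-estimate}) does not follow from the ingredients you list; this is precisely the step you label the ``technical crux'' but leave unproved.
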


We call (\ref{ineq.modified-boundary-estimate}) the adjustable uniform estimate, since it allows us to choose the adjustable parameter $t$ flexibly to minimize the right-hand side of (\ref{ineq.modified-boundary-estimate}), according to certain norms of $F$ and $h$. The special function $\mathcal{M}_{D_{5r},t}(F\delta)$ is introduced here due to a technical reason that may be seen in the proof Theorem \ref{thm.main} (ii). Observe that (\ref{ineq.modified-boundary-estimate}) recovers (\ref{est.unif}) if we simply set $t = r$; see Remark \ref{rmk.recover}. To see the usefulness of (\ref{ineq.modified-boundary-estimate}), we assume $F = 0$ and $h$ is highly oscillatory in a form of $h(x) = \tilde{h}(x/\e)$. Then the usual uniform estimate (\ref{est.unif}) gives $\norm{\nabla u_\e}_{L^\infty(D_r)} + \text{osc}_{D_r}[p_\varepsilon] \lesssim O(\e^{-\eta})$, while (\ref{ineq.modified-boundary-estimate}) with $t = \e$ leads to an obviously improved bound $O(|\ln \e|)$. The loss of $O(|\ln \e|)$, leading to the extra logarithm in (\ref{est.mainDG}), is expected and essentially cannot be avoided, since by the singular integral theory, the mapping $h \mapsto \nabla u_\e$ is not bounded in $L^\infty$, even for the equation with Laplace operator.

Notice that in Theorem \ref{thm.modified-boundary-estimate}, we have assumed $g = 0$ and $f = 0$. This is due to the lack of the integral representation for the pressure $p_\e$ in the most general setting. Actually, similar adjustable estimate is still valid for non-trivial data $g$ and $f$, if only $\norm{\nabla u_\e}_{L^\infty(D_r)}$ is concerned. The strategy to deal with the divergence data $g$ is contained in Theorem \ref{thm.divergence.modifiedC1} (the theorem itself is critical for proving the Theorem \ref{thm.main}). The strategy to handle the boundary data $f$ is contained in Theorem \ref{thm.elliptic.adj}, where, of independent interest, we obtain an analog for elliptic system. Precisely, we prove that if $u_\e$ is the solution of $\cL_\e(u_\e) = \text{div}(h)$ in $\Omega$ and $u_\e = f$ on $\partial\Omega$, then there exists a constant $C$ such that for any $0<t\le R_0 = \text{diam}(\Omega)$,
\begin{equation*}
\norm{\nabla u_\e}_{L^\infty(\Omega)} \le C\ln[t^{-1}R_0+2] \big(\norm{h}_{L^\infty(\Omega)} + \norm{\nabla f}_{L^\infty(\Omega)} \big) + Ct^\eta \big([h]_{C^\eta(\Omega)} + [\nabla f]_{C^\eta(\Omega)} \big).
\end{equation*}
The last estimate is new and can be viewed as a refined version of the uniform Lipschitz estimate proved in \cite{AL8701}.

Finally, we point out that the proof of the adjustable estimate in Theorem \ref{thm.modified-boundary-estimate} (also in Theorem \ref{thm.divergence.modifiedC1} and \ref{thm.elliptic.adj}) essentially relies on the uniform pointwise estimates of the Green's functions established in Theorem \ref{thm.green-function.pointwise} and Lemma \ref{lem.Pi-difference}.

The organization of the paper is the following. In section 2, we provide some preliminaries in periodic homogenization of Stokes systems. In section 3, we establish the uniform estimates, i.e., Lipschitz estimate for the velocity and oscillation for the pressure. In section 4, we obtain the uniform estimates for the Green's functions. Estimate (\ref{est.mainG}) is proved in Section 5, (\ref{est.mainDG}) and (\ref{est.mainPi}) are proved in Section 6, and the expansions for $\nabla_y \nabla_x G_\e(x,y), \nabla_y \Pi_\e(x,y)$ and the fundamental solutions are given in Section 7.

\section{Preliminaries}

In this section, we give a review of homogenization theory of the Stokes systems with periodic coefficients. We refer the reader to \cite[pp.76-81]{Lions} and \cite{GuShen15,Gu16} for more details. We begin with the solvability of Stokes system (\ref{def.Dirichlet.Stokes}) and the energy estimate.
\begin{theorem}[\cite{GuShen15}, Theorem 2.1]\label{thm.energy-estimate}
Let $\Omega$ be a bounded Lipschitz domain and $A$ satisfy the ellipticity condition (\ref{cond.ellipticity}). Assume $F\in H^{-1}(\Omega;\mathbb{R}^d), h\in L^2(\Omega;\R^{d\times d})$, $g\in L^2(\Omega)$ and $f\in H^{1/2}(\partial\Omega;\mathbb{R}^d)$ satisfy the compatibility condition (\ref{cond.compatibility}). Then there exists a unique (up to a constant) weak solution $(u_\varepsilon,p_\varepsilon) \in H^1(\Omega;\mathbb{R}^d)\times L^2(\Omega)$ of Stokes system (\ref{def.Dirichlet.Stokes}) such that
\begin{equation}\label{ineq.energy}
\|u_\varepsilon\|_{H^1(\Omega)}+\|p_\varepsilon-\average_\Omega p_\varepsilon\|_{L^2(\Omega)} 
\le C\Big\{\|F\|_{H^{-1}(\Omega)}+\norm{h}_{L^2(\Omega)}+\|g\|_{L^2(\Omega)}+\|f\|_{H^{1/2}(\partial\Omega)}\Big\},
\end{equation}
where $C$ depends only on $d$, $\mu$ and $\Omega$.
\end{theorem}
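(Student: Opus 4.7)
The plan is to follow the classical variational approach for Stokes-type systems: reduce to homogeneous Dirichlet and divergence-free data via lifting, solve a coercive problem on the subspace of divergence-free $H^1_0$ fields, and recover the pressure through the de Rham / inf-sup framework. The strong ellipticity (\ref{cond.ellipticity}) already implies coercivity of the bilinear form $a(u,v) = \int_\Omega A(x/\e)\nabla u\cdot\nabla v$ on $H^1_0(\Omega;\R^d)$ via Poincaré, so there is no need to invoke Korn's inequality.

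Concretely, I would first use the $H^{1/2}\to H^1$ extension to pick $F_f\in H^1(\Omega;\R^d)$ with $F_f|_{\partial\Omega}=f$ and $\|F_f\|_{H^1}\le C\|f\|_{H^{1/2}(\partial\Omega)}$. Setting $\tilde g := g-\rdiv(F_f)\in L^2(\Omega)$, the compatibility condition (\ref{cond.compatibility}) yields $\int_\Omega \tilde g =0$. Since $\Omega$ is Lipschitz, the Bogovski\u{\i} solvability of $\rdiv w=\tilde g$ on $H^1_0$ produces $w\in H^1_0(\Omega;\R^d)$ with $\|w\|_{H^1}\le C\|\tilde g\|_{L^2}$. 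Writing $u_\e = v + w + F_f$ reduces the problem to finding $v\in V:=\{\varphi\in H^1_0(\Omega;\R^d):\rdiv \varphi=0\}$ satisfying
\begin{equation*}
\int_\Omega A(x/\e)\nabla v\cdot\nabla \varphi\,dx = \langle F,\varphi\rangle - \int_\Omega h\cdot\nabla\varphi\,dx - \int_\Omega A(x/\e)\nabla(w+F_f)\cdot\nabla\varphi\,dx
\end{equation*}
for all $\varphi\in V$. The Lax--Milgram lemma on $V$ (coercivity from (\ref{cond.ellipticity}) plus Poincaré, boundedness immediate) produces a unique such $v$ together with the bound $\|v\|_{H^1}\le C\{\|F\|_{H^{-1}}+\|h\|_{L^2}+\|g\|_{L^2}+\|f\|_{H^{1/2}}\}$.

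To recover the pressure, I would invoke the de Rham theorem in the Lipschitz setting: the distribution $\mathcal{L}_\e u_\e - F - \rdiv(h)\in H^{-1}(\Omega;\R^d)$ annihilates every $\varphi\in V$, hence equals $-\nabla p_\e$ for some $p_\e\in L^2(\Omega)$, unique up to additive constants. The $L^2$ bound on $p_\e - \average_\Omega p_\e$ then follows from the Ne\v{c}as inf--sup inequality on Lipschitz domains,
\begin{equation*}
\|q\|_{L^2(\Omega)} \le C\sup_{\varphi\in H^1_0(\Omega;\R^d)\setminus\{0\}}\frac{\int_\Omega q\,\rdiv \varphi\,dx}{\|\nabla \varphi\|_{L^2(\Omega)}}\qquad \text{for all } q\in L^2_0(\Omega),
\end{equation*}
applied to $q=p_\e-\average_\Omega p_\e$ and tested against $\varphi\in H^1_0(\Omega;\R^d)$ via the equation $\nabla p_\e = F+\rdiv(h) - \mathcal{L}_\e u_\e$. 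Combining with the bound on $\nabla u_\e$ produces (\ref{ineq.energy}). Uniqueness of $u_\e$ and of $p_\e$ (modulo constants) follows by testing the homogeneous problem against $u_\e$ itself inside $V$ and using the same inf--sup argument.

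The only non-routine ingredients are the Bogovski\u{\i} right inverse of $\rdiv$ on $H^1_0$ and the equivalent Ne\v{c}as inf--sup condition, both of which are standard on bounded Lipschitz domains (see, e.g., Galdi's monograph); these are the points where the Lipschitz regularity of $\partial\Omega$ actually enters. Everything else is insensitive to the oscillating coefficient $A(x/\e)$ because the ellipticity constant $\mu$ in (\ref{cond.ellipticity}) is uniform in $\e$, so the constant $C$ in (\ref{ineq.energy}) depends only on $d$, $\mu$, and $\Omega$ as claimed.
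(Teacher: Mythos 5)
Your proposal is correct, and the paper itself offers no proof of this statement: it is quoted verbatim from \cite{GuShen15}, where essentially the same classical variational scheme (lifting of the boundary data, a Bogovski\u{\i}-type right inverse of the divergence, Lax--Milgram on the divergence-free subspace, and recovery of the pressure via de Rham together with the Ne\v{c}as inf--sup inequality) is used. Your observation that the strong ellipticity (\ref{cond.ellipticity}), being imposed on all matrices $\xi\in\R^{d\times d}$, gives coercivity directly without Korn's inequality is exactly the point that makes the argument uniform in $\e$, so the constant depends only on $d$, $\mu$ and $\Omega$ as claimed.
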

In view of (\ref{ineq.energy}), for a bounded measurable set $E$, we define the space $L_0^p(E)$ as
\begin{equation}\label{def.L20space}
L_0^p(E)=\bigg\{\varphi\in L^p(E) : \int_{E} \varphi=0\bigg\}.
\end{equation}
For any $\varphi\in L^p(E)$, the $L_0^p$-norm is defined by
$$
\|\varphi\|_{L^p_0(E)}=\|\varphi-\average_E \varphi\|_{L^p(E)}.
$$

Let $Y=(0,1]^d$. Suppose $A$ satisfies (\ref{cond.ellipticity}) and (\ref{cond.periodicity}). For each $1\le j,\beta \le d$, there exist 1-periodic functions $(\chi_j^\beta,\pi_j^\beta)\in H^1_{\text{loc}}(\mathbb{R}^d;\mathbb{R}^d)\times L^2_{\text{loc}}(\mathbb{R}^d)$, which are called the correctors of the Stokes system (\ref{def.Dirichlet.Stokes}),
such that the following cell problem holds
\begin{equation}\label{def.corrector}
\left\{
\begin{aligned}
\mathcal{L}_1(\chi_j^\beta+P_j^\beta)+\nabla \pi_j^\beta &= 0 \quad \text{in }\mathbb{R}^d, \\
\text{ div}(\chi_j^\beta) &=0 \quad \text{in }\mathbb{R}^d,\\
\int_Y \pi_j^\beta=0,\text{ and} \int_Y \chi_j^\beta &=0.\\
\end{aligned}
\right.
\end{equation}
Recall that the homogenized operator is given by $\mathcal{L}_0=-\text{div}(\widehat{A}\nabla)$, where $\widehat{A}=(\widehat{a}_{ij}^{\alpha\beta})$ is the homogenized (effective) matrix defined by
\begin{equation}\label{def.effective-matrix}
\widehat{a}_{ij}^{\alpha\beta}=\average_Y \left[a_{ij}^{\alpha\beta}(y)+a_{ik}^{\alpha\gamma}(y)\frac{\partial}{\partial y_k}(\chi_j^{\gamma\beta})(y)\right]dy.
\end{equation}
By the homogenization theory of Stokes systems (see \cite{Lions,GuShen15}), we know that, as $\varepsilon\rightarrow 0$, 
$$
u_\varepsilon\rightharpoonup u_0 \text{ weakly in }H^1(\Omega;\mathbb{R}^d)\quad\text{ and }\quad p_\varepsilon-\average_{\Omega} p_\varepsilon\rightharpoonup p_0-\average_{\Omega} p_0 \text{ weakly in }L^2(\Omega),
$$
where $(u_0,p_0)$ is a solution of the following homogenized (effective) Stokes system
\begin{equation}\label{def.Dirichlet.Stokes0}
\left\{
\begin{aligned}
\mathcal{L}_0(u_0)+\nabla p_0 &= F + \text{div}(h) &\qquad \text{ in }\Omega,\\
\text{div}(u_0) &=g &\qquad\text{ in }\Omega,\\
u_0& =f &\qquad\text{ on }\partial\Omega.
\end{aligned}
\right.	
\end{equation}

Then, we introduce the dual correctors $(\phi_{kij}^{\alpha\beta},q_{ij}^\beta)$ of Stokes system (\ref{def.Dirichlet.Stokes}); details may be found in \cite{Gu16}.
For $1\le i,j,\alpha,\beta \le d$, let
\begin{equation}\label{def.b}
b_{ij}^{\alpha\beta}(y)=a_{ij}^{\alpha\beta}(y)+a_{ik}^{\alpha\gamma}(y)\frac{\partial}{\partial y_k}(\chi_{j}^{\gamma\beta}(y))-\widehat{a}_{ij}^{\alpha\beta}.
\end{equation}
It is worth noting that $b_{ij}^{\alpha\beta}\in L^\infty(Y)$ is $1$-periodic with 
$\int_Y b_{ij}^{\alpha\beta}(y)dy=0$.
\begin{lemma}[\cite{Gu16}, Lemma 3.1 and Remark 3.2]\label{lem.dual-correctors}
For fixed $1\le i,j,\beta\le d$, there exists $(\phi_{kij}^{\alpha\beta},q_{ij}^\beta) \in H_{\text{per}}^1(Y)\times H_{\text{per}}^1(Y)$ such that
\begin{equation}\label{eq.dual-correctors}
b_{ij}^{\alpha\beta}=\frac{\partial}{\partial y_k}(\phi_{kij}^{\alpha\beta})+\frac{\partial}{\partial y_\alpha}(q_{ij}^\beta) \quad \text{\rm and }\quad \phi_{kij}^{\alpha\beta}=-\phi_{ikj}^{\alpha\beta}.
\end{equation}
Moreover,
\begin{equation}\label{ineq.dual-corrector-energy}
\|\phi_{kij}^{\alpha\beta}\|_{L^\infty(Y)}+\|q_{ij}^{\beta}\|_{L^\infty(Y)} \le C,
\end{equation}
and
\begin{equation}\label{eq.q-pi}
\pi_j^\beta=\frac{\partial q_{ij}^\beta}{\partial y_i},
\end{equation}
where $C$ depends only on $d$ and $\mu$.
\end{lemma}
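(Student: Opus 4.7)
The plan is to build the pair $(\phi_{kij}^{\alpha\beta}, q_{ij}^\beta)$ via two periodic Poisson problems, and to reduce the desired flux-pressure decomposition of $b_{ij}^{\alpha\beta}$ to the elliptic-systems construction once the pressure-like piece $\partial_{y_\alpha} q_{ij}^\beta$ has been peeled off.

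First I would verify two preparatory facts directly from (\ref{def.corrector}) and (\ref{def.b}): that $b_{ij}^{\alpha\beta}$ is $1$-periodic with $\int_Y b_{ij}^{\alpha\beta}\,dy = 0$ (by periodicity of $\chi_j^\beta$ and the definition (\ref{def.effective-matrix}) of $\widehat{A}$), and the key identity
\begin{equation*}
\frac{\partial}{\partial y_i} b_{ij}^{\alpha\beta}(y) = \frac{\partial}{\partial y_\alpha}\pi_j^\beta(y)\qquad\text{in }\R^d.
\end{equation*}
The latter comes from expanding $\mathcal{L}_1(\chi_j^\beta + P_j^\beta) + \nabla \pi_j^\beta = 0$ componentwise, using $\partial_{y_k} P_j^{\gamma\beta} = \delta_{kj}\delta^{\gamma\beta}$, and noting that the constant $\widehat{a}_{ij}^{\alpha\beta}$ contributes nothing to the $y_i$-derivative.

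Next I would construct $q_{ij}^\beta$. Since $\pi_j^\beta \in L^2_{\rm per}(Y)$ has zero mean, the periodic Poisson problem $\Delta_y E_j^\beta = \pi_j^\beta$ in $Y$ admits a (unique, zero-mean) periodic solution $E_j^\beta \in H^2_{\rm per}(Y)$. Setting $q_{ij}^\beta := \partial_{y_i} E_j^\beta$ immediately gives $\partial_{y_i} q_{ij}^\beta = \Delta_y E_j^\beta = \pi_j^\beta$, i.e.\ (\ref{eq.q-pi}). Then I introduce the modified flux
\begin{equation*}
\tilde{b}_{ij}^{\alpha\beta} := b_{ij}^{\alpha\beta} - \frac{\partial}{\partial y_\alpha} q_{ij}^\beta,
\end{equation*}
which is periodic, mean zero, and, thanks to the identity above, \emph{divergence-free in~$i$}: $\partial_{y_i}\tilde{b}_{ij}^{\alpha\beta} = \partial_{y_\alpha}\pi_j^\beta - \partial_{y_\alpha}\partial_{y_i}q_{ij}^\beta = 0$.

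For $\phi$ I would then mimic the classical flux correction for divergence-form elliptic systems: solve the periodic problem $\Delta_y F_{ij}^{\alpha\beta} = \tilde{b}_{ij}^{\alpha\beta}$ with zero mean, and define
\begin{equation*}
\phi_{kij}^{\alpha\beta} := \frac{\partial F_{ij}^{\alpha\beta}}{\partial y_k} - \frac{\partial F_{kj}^{\alpha\beta}}{\partial y_i}.
\end{equation*}
The antisymmetry $\phi_{kij}^{\alpha\beta} = -\phi_{ikj}^{\alpha\beta}$ is built in, and $\partial_{y_k}\phi_{kij}^{\alpha\beta} = \Delta_y F_{ij}^{\alpha\beta} - \partial_{y_i}\partial_{y_k}F_{kj}^{\alpha\beta}$; the second term vanishes because $\partial_{y_k} F_{kj}^{\alpha\beta}$ is a periodic harmonic function (its Laplacian equals $\partial_{y_k}\tilde{b}_{kj}^{\alpha\beta} = 0$), hence constant. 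Combined with the definition of $\tilde b$ this yields the decomposition (\ref{eq.dual-correctors}). Finally, for the uniform bound (\ref{ineq.dual-corrector-energy}) I would invoke the regularity of $(\chi_j^\beta,\pi_j^\beta)$ under the Hölder hypothesis (\ref{cond.holder}) — standard Schauder theory gives $\chi_j^\beta \in C^{1,\lambda}$ and $\pi_j^\beta \in C^{0,\lambda}$, so $b_{ij}^{\alpha\beta} \in C^{0,\lambda}$; then Schauder/$W^{2,p}$ estimates for the periodic Laplacian upgrade $E_j^\beta$ and $F_{ij}^{\alpha\beta}$ to $C^{2,\lambda}$, and taking one derivative produces the required $L^\infty$ bounds on $q_{ij}^\beta$ and $\phi_{kij}^{\alpha\beta}$. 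The main subtlety I expect is not any single step but bookkeeping the indices in the decomposition so that $q$ only carries the $\alpha$-gradient (the pressure direction) while $\phi$ absorbs the divergence-free remainder, which is precisely what makes the Stokes version differ from the classical elliptic one.
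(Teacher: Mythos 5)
Your construction is correct and is essentially the same as that of the cited source \cite{Gu16} (the paper itself does not prove this lemma, it only quotes it): you peel off the pressure part by solving $\Delta E_j^\beta=\pi_j^\beta$ and setting $q_{ij}^\beta=\partial_{y_i}E_j^\beta$, then apply the classical antisymmetric flux-corrector construction to the divergence-free remainder $\tilde b_{ij}^{\alpha\beta}=b_{ij}^{\alpha\beta}-\partial_{y_\alpha}q_{ij}^\beta$, which is exactly the argument behind \cite{Gu16}, Lemma 3.1 and Remark 3.2 (there one solves $\Delta f_{ij}^{\alpha\beta}=b_{ij}^{\alpha\beta}$ directly and identifies $\partial_{y_k}f_{kj}^{\alpha\beta}$ with $\partial_{y_\alpha}E_j^\beta$ up to a constant, a trivially rearranged version of your steps). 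The only caveat is the constant in (\ref{ineq.dual-corrector-energy}): your Schauder-based $L^\infty$ bounds necessarily depend on the H\"older data $(\lambda,\tau)$ of $A$ in (\ref{cond.holder}) and not merely on $d$ and $\mu$, which reflects the actual hypotheses needed (already used for $b_{ij}^{\alpha\beta}\in L^\infty(Y)$) rather than any gap in your argument.
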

For fixed $1\le j,\beta\le d$, let $(V_{\varepsilon,j}^\beta(x),T_{\varepsilon,j}^\beta(x))\in H^1(\Omega;\mathbb{R}^d)\times L^2(\Omega)$ satisfy
\begin{equation}\label{def.V}
\left\{
\begin{aligned}
\mathcal{L}_\varepsilon(V_{\varepsilon,j}^\beta)+\nabla T_{\varepsilon,j}^\beta&=0 &\quad\text{ in }\Omega,\\
\text{div}(V_{\varepsilon,j}^\beta)&=\text{ div}(P_j^\beta) &\quad\text{ in }\Omega.
\end{aligned}
\right.
\end{equation}
The following lemma plays a vital part in deriving asymptotic expansions of Green's functions and their derivatives. For future applications, we choose either $(\Phi^\beta_{\varepsilon,j}(x),\Lambda_j^\beta(x))$ or $(\varepsilon\chi_j^\beta(x/\varepsilon)+P_j^\beta(x), \pi_j^\beta(x/\varepsilon))$ in the place of $(V_{\varepsilon,j}^\beta(x),T_{\varepsilon,j}^\beta(x))$, whichever is convenient. 
\begin{lemma}\label{lem.w-system}
Suppose that $(u_\varepsilon,p_\varepsilon)\in H^1(\Omega;\mathbb{R}^d)\times L^2(\Omega)$ and $(u_0,p_0)\in H^2(\Omega;\mathbb{R}^d)\times H^1(\Omega)$ satisfy
$$
\left\{
\begin{aligned}
\mathcal{L}_\varepsilon(u_\varepsilon)+\nabla p_\varepsilon&= \mathcal{L}_0(u_0)+\nabla p_0 &\quad\text{ in }\Omega,\\
\text{\rm div}(u_\varepsilon)&=\text{\rm div}(u_0)  &\quad\text{ in }\Omega.
\end{aligned}
\right.
$$
Let
\begin{equation}\label{def.w}
w_\varepsilon(x)=u_\varepsilon(x)-u_0(x)-\big\{V_{\varepsilon,j}^\beta(x)-P_j^\beta(x)\big\}\cdot\frac{\partial u_0^\beta}{\partial x_j},
\end{equation}
where $(V_{\varepsilon,j}^\beta,T_{\varepsilon,j}^\beta)$ is defined in (\ref{def.V}) for each $1\le j,\beta \le d$. Then
\begin{equation}\label{eq.w-system}
\begin{aligned}
&(\mathcal{L}_\varepsilon(w_\varepsilon))^\alpha+\frac{\partial}{\partial x_\alpha}\bigg\{p_\varepsilon-p_0-T_{\varepsilon,j}^\beta\frac{\partial u_0^\beta}{\partial x_j}-\varepsilon q_{ij}^\beta(x/\varepsilon)\frac{\partial^2 u_0^\beta}{\partial x_i \partial x_j}\bigg\}\\
&=-\frac{\partial}{\partial x_i}\bigg\{ \Big[\varepsilon \phi_{kij}^{\alpha\beta}(x/\varepsilon)-a_{ik}^{\alpha\gamma}(x/\varepsilon)\big(V_{\varepsilon,j}^{\gamma\beta}-P_j^{\gamma\beta}\big)\Big]\frac{\partial^2 u_0^\beta}{\partial x_k\partial x_j}\bigg\}-\varepsilon\frac{\partial}{\partial x_i}\bigg\{ q_{ij}^\beta(x/\varepsilon)\frac{\partial^2 u_0^\beta}{\partial x_\alpha\partial x_j}\bigg\}\\
&\quad+a_{ik}^{\alpha\gamma}(x/\varepsilon)\frac{\partial}{\partial x_k}\Big[V_{\varepsilon,j}^{\gamma\beta}-P_j^{\gamma\beta}-\varepsilon\chi_j^{\gamma\beta}(x/\varepsilon)\Big]\frac{\partial^2 u_0^\beta}{\partial x_i \partial x_j}+\Big[\pi_j^\beta(x/\varepsilon)-T_{\varepsilon,j}^\beta\Big]\frac{\partial^2 u_0^\beta}{\partial x_\alpha\partial x_j}.
\end{aligned}
\end{equation}
\end{lemma}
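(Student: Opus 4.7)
My plan is a direct but careful computation. I would first expand $\mathcal{L}_\varepsilon(w_\varepsilon)$ by linearity into three pieces corresponding to the three summands in \eqref{def.w}. Using the hypothesis $\mathcal{L}_\varepsilon(u_\varepsilon)+\nabla p_\varepsilon=\mathcal{L}_0(u_0)+\nabla p_0$ to replace $\mathcal{L}_\varepsilon(u_\varepsilon)$, the first two summands contribute $\partial_\alpha(p_0-p_\varepsilon)$ (matching the $\partial_\alpha(p_\varepsilon-p_0)$ pressure term on the LHS) plus the homogenization error $\partial_i[(a_{ij}^{\alpha\beta}(x/\varepsilon)-\widehat{a}_{ij}^{\alpha\beta})\partial_j u_0^\beta]$. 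For the third summand I apply the Leibniz rule to $\mathcal{L}_\varepsilon[(V_{\varepsilon,j}^\beta-P_j^\beta)\partial_j u_0^\beta]$; substituting \eqref{def.V} for $\mathcal{L}_\varepsilon(V_{\varepsilon,j}^\beta)$ produces a term $\partial_\alpha[T_{\varepsilon,j}^\beta\partial_j u_0^\beta]-T_{\varepsilon,j}^\beta\partial_\alpha\partial_j u_0^\beta$ (the first part joining the pressure correction $\partial_\alpha\{-T_{\varepsilon,j}^\beta\partial_j u_0^\beta\}$ on the LHS and the second surviving as part of term (d) on the RHS), and leaves behind a cross term $a_{ik}^{\alpha\gamma}(x/\varepsilon)\partial_k(V_{\varepsilon,j}^{\gamma\beta}-P_j^{\gamma\beta})\,\partial_i\partial_j u_0^\beta$ together with a divergence $\partial_i[a_{ik}^{\alpha\gamma}(x/\varepsilon)(V_{\varepsilon,j}^{\gamma\beta}-P_j^{\gamma\beta})\partial_k\partial_j u_0^\beta]$ (which becomes the $a_{ik}(V-P)$ half of term (a) on the RHS).

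The decisive move is to reorganize the homogenization error by invoking Lemma \ref{lem.dual-correctors}. Writing the bracket as $b_{ij}^{\alpha\beta}(x/\varepsilon)-a_{ik}^{\alpha\gamma}(x/\varepsilon)(\partial_{y_k}\chi_j^{\gamma\beta})(x/\varepsilon)$ via \eqref{def.b}, the second piece combines with the leftover cross term above and, after noting $(\partial_{y_k}\chi_j^{\gamma\beta})(x/\varepsilon)=\partial_{x_k}[\varepsilon\chi_j^{\gamma\beta}(x/\varepsilon)]$, reproduces exactly term (c) on the right. For the $b$-piece, the factorization $b_{ij}^{\alpha\beta}=\partial_{y_k}\phi_{kij}^{\alpha\beta}+\partial_{y_\alpha}q_{ij}^\beta$ from \eqref{eq.dual-correctors} yields four subterms upon expansion of $\partial_i[b_{ij}^{\alpha\beta}(x/\varepsilon)\partial_j u_0^\beta]$: a double-divergence piece $\varepsilon\partial_i\partial_k[\phi_{kij}^{\alpha\beta}(x/\varepsilon)\partial_j u_0^\beta]$ that vanishes by the skew-symmetry $\phi_{kij}^{\alpha\beta}=-\phi_{ikj}^{\alpha\beta}$; a piece $-\varepsilon\partial_i[\phi_{kij}^{\alpha\beta}(x/\varepsilon)\partial_k\partial_j u_0^\beta]$ that supplies the $\phi$-half of term (a); a $\partial_\alpha$-piece that, after using \eqref{eq.q-pi} to identify $\pi_j^\beta=\partial_{y_i}q_{ij}^\beta$, splits as $\partial_\alpha\{\varepsilon q_{ij}^\beta(x/\varepsilon)\partial_i\partial_j u_0^\beta\}+\partial_\alpha[\pi_j^\beta(x/\varepsilon)\partial_j u_0^\beta]$, whose first half cancels against the $-\partial_\alpha\{\varepsilon q_{ij}^\beta(x/\varepsilon)\partial_i\partial_j u_0^\beta\}$ built into the LHS and whose second half contributes $\pi_j^\beta(x/\varepsilon)\partial_\alpha\partial_j u_0^\beta$ to term (d); and a remainder $-\varepsilon\partial_i[q_{ij}^\beta(x/\varepsilon)\partial_\alpha\partial_j u_0^\beta]$ that is precisely term (b).

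The argument uses no deep analytical input: everything reduces to the defining equations \eqref{def.V} and \eqref{def.corrector}, the dual-corrector factorization together with the skew-symmetry of $\phi_{kij}^{\alpha\beta}$, and the identity $\pi_j^\beta=\partial_{y_i}q_{ij}^\beta$. The main obstacle is therefore purely bookkeeping: one must carefully track which summands migrate under $\partial_\alpha$ into the pressure correction on the LHS versus which remain as divergence or non-divergence contributions on the RHS, and one must verify that the transient third-order derivatives of $u_0$ appearing in both $\partial_\alpha\{\varepsilon q_{ij}^\beta(x/\varepsilon)\partial_i\partial_j u_0^\beta\}$ and $-\varepsilon\partial_i[q_{ij}^\beta(x/\varepsilon)\partial_\alpha\partial_j u_0^\beta]$ cancel against each other, so that only the displayed second-order derivative terms survive.
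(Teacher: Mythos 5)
Your plan is correct and is essentially the paper's own proof: expand $\mathcal{L}_\varepsilon(w_\varepsilon)$, substitute the defining systems \eqref{def.V} and \eqref{def.corrector}, and finish with the dual-corrector factorization \eqref{eq.dual-correctors}, the skew-symmetry of $\phi_{kij}^{\alpha\beta}$, and the identity \eqref{eq.q-pi}, which is exactly the route taken in the paper. The only caveat is bookkeeping you gloss over: substituting \eqref{def.V} also leaves the piece $[\mathcal{L}_\varepsilon(P_j^\beta)]^\alpha\frac{\partial u_0^\beta}{\partial x_j}=-\frac{\partial}{\partial x_i}\big[a_{ij}^{\alpha\beta}(x/\varepsilon)\big]\frac{\partial u_0^\beta}{\partial x_j}$, your ``reproduces exactly term (c)'' step actually leaves over $[\mathcal{L}_\varepsilon(\varepsilon\chi_j^{\beta}(x/\varepsilon))]^\alpha\frac{\partial u_0^\beta}{\partial x_j}$, and the term $\frac{\partial}{\partial x_\alpha}\big\{\pi_j^\beta(x/\varepsilon)\frac{\partial u_0^\beta}{\partial x_j}\big\}$ sheds the singular piece $\frac{\partial}{\partial x_\alpha}\big[\pi_j^\beta(x/\varepsilon)\big]\frac{\partial u_0^\beta}{\partial x_j}$; these three unlisted pieces cancel one another precisely via the rescaled cell problem \eqref{def.corrector} (the paper packages this as $\mathcal{L}_\varepsilon(V_{\varepsilon,j}^\beta-P_j^\beta)=\mathcal{L}_\varepsilon(\varepsilon\chi_j^\beta(x/\varepsilon))+\nabla\big(\pi_j^\beta(x/\varepsilon)-T_{\varepsilon,j}^\beta\big)$), so your identity closes as claimed once this cancellation is recorded.
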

\begin{proof}
Since $\mathcal{L}_\varepsilon(u_\varepsilon)+\nabla p_\varepsilon=\mathcal{L}_0(u_0)+\nabla p_0$, a direct calculation shows that
\begin{equation*}
\begin{aligned}
(\mathcal{L}_\varepsilon(w_\varepsilon))^\alpha &= \bigg\{[\mathcal{L}_0-\mathcal{L}_\varepsilon](u_0)\bigg\}^\alpha-\frac{\partial}{\partial x_\alpha}(p_\varepsilon-p_0)-\bigg\{\mathcal{L}_\varepsilon\Big(\big[V_{\varepsilon,j}^\beta-P_j^\beta\big]\frac{\partial u_0^\beta}{\partial x_j}\Big)\bigg\}^\alpha\\
&=-\frac{\partial}{\partial x_i}\bigg\{\Big[\widehat{a}_{ij}^{\alpha\beta}-a_{ij}^{\alpha\beta}(x/\varepsilon)\Big]\frac{\partial u_0^\beta}{\partial x_j}\bigg\}-\frac{\partial}{\partial x_\alpha}(p_\varepsilon-p_0)\\
&\quad+\frac{\partial}{\partial x_i}\bigg\{a_{ik}^{\alpha\gamma}(x/\varepsilon)\frac{\partial}{\partial x_k}\Big[V_{\varepsilon,j}^{\gamma\beta}-P_j^{\gamma\beta}\Big]\bigg\}\frac{\partial u_0^\beta}{\partial x_j}\\
&\quad+a_{ik}^{\alpha\gamma}(x/\varepsilon)\frac{\partial}{\partial x_k}\Big[V_{\varepsilon,j}^{\gamma\beta}-P_j^{\gamma\beta}\Big]\frac{\partial^2 u_0^\beta}{\partial x_i \partial x_j}\\
&\quad +\frac{\partial}{\partial x_i}\bigg\{a_{ik}^{\alpha\gamma}(x/\varepsilon)\Big[V_{\varepsilon,j}^{\gamma\beta}-P_j^{\gamma\beta}\Big]\frac{\partial^2 u_0^\beta}{\partial x_k\partial x_j}\bigg\}.
\end{aligned}
\end{equation*}
By definitions (\ref{def.V}) and (\ref{def.corrector}), we obtain that
\begin{equation*}
\mathcal{L}_\varepsilon(V_{\varepsilon,j}^\beta-P_j^\beta)=-\mathcal{L}_\varepsilon(P_j^\beta)-\nabla T_{\varepsilon,j}^\beta=\mathcal{L}_\varepsilon(\varepsilon\chi_j^\beta(x/\varepsilon))+\nabla (\pi_j^\beta(x/\varepsilon)-T_{\varepsilon,j}^\beta).
\end{equation*}
Hence,
\begin{equation}\label{eq.w-system-in-b}
\begin{aligned}
(\mathcal{L}_\varepsilon(w_\varepsilon))^\alpha & =\frac{\partial}{\partial x_i}\bigg\{b_{ij}^{\alpha\beta}(x/\varepsilon)\frac{\partial u_0^\beta}{\partial x_j}\bigg\}-\frac{\partial}{\partial x_\alpha}\bigg\{p_\varepsilon-p_0+\Big[\pi_j^\beta(x/\varepsilon)-T_{\varepsilon,j}^\beta\Big]\frac{\partial u_0^\beta}{\partial x_j}\bigg\}\\
&+a_{ik}^{\alpha\gamma}(x/\varepsilon)\frac{\partial}{\partial x_k}\Big[V_{\varepsilon,j}^{\gamma\beta}-P_j^{\gamma\beta}-\chi_j^{\gamma\beta}(x/\varepsilon)\Big]\frac{\partial^2 u_0^\beta}{\partial x_i \partial x_j}+\Big[\pi_j^\beta(x/\varepsilon)-T_{\varepsilon,j}^\beta\Big]\frac{\partial^2 u_0^\beta}{\partial x_\alpha\partial x_j}\\
& +\frac{\partial}{\partial x_i}\bigg\{a_{ik}^{\alpha\gamma}(x/\varepsilon)\Big[V_{\varepsilon,j}^{\gamma\beta}-P_j^{\gamma\beta}\Big]\frac{\partial^2 u_0^\beta}{\partial x_k\partial x_j}\bigg\},
\end{aligned}
\end{equation}
where $b_{ij}^{\alpha\beta}(y)$ is defined by (\ref{def.b}). By using Lemma \ref{lem.dual-correctors}, the first term on the right-hand side of (\ref{eq.w-system-in-b}) may be rewritten as
\begin{equation*}
\frac{\partial}{\partial x_i}\bigg\{\frac{\partial}{\partial x_k}\Big[\varepsilon \phi_{kij}^{\alpha\beta}(x/\varepsilon)\Big]\frac{\partial u_0^\beta}{\partial x_j}\bigg\}+\frac{\partial}{\partial x_i}\bigg\{\frac{\partial}{\partial x_\alpha}\Big[\varepsilon q_{ij}^{\beta}(x/\varepsilon)\Big]\frac{\partial u_0^\beta}{\partial x_j}\bigg\}:=I_1+I_2.
\end{equation*}
Thanks to the anti-symmetry condition in (\ref{eq.dual-correctors}), we have
\begin{equation*}
I_1=-\varepsilon\frac{\partial}{\partial x_i}\bigg\{ \phi_{kij}^{\alpha\beta}(x/\varepsilon)\frac{\partial^2 u_0^\beta}{\partial x_k\partial x_j}\bigg\}, 
\end{equation*}
and
\begin{equation*}
I_2=\frac{\partial}{\partial x_\alpha}\bigg\{\frac{\partial}{\partial x_i}\bigg[\varepsilon q_{ij}^\beta(x/\varepsilon)\frac{\partial u_0^\beta}{\partial x_j}\bigg]\bigg\}-\frac{\partial}{\partial x_i}\bigg\{\varepsilon q_{ij}^\beta(x/\varepsilon)\frac{\partial^2 u_0^\beta}{\partial x_\alpha\partial x_j}\bigg\}.
\end{equation*}
Combining these with (\ref{eq.q-pi}), we obtain (\ref{eq.w-system}) as desired. 
\end{proof}

Finally, we provide the Cacciopoli's inequality of Stokes systems. For any $x\in \overline{\Omega}$, we define $D_r = D_r(x) := \Omega \cap B_r(x)$, where $B_r(x)$ is the ball centered at $x$ with radius $r$. We also denote by $\Delta_r = \Delta_r(x): = \partial\Omega \cap B_r(x)$ the surface ball on $\partial\Omega$. These notations will be used throughout.

\begin{theorem}[\cite{GuShen15}, Theorem 6.2]\label{thm.Cacciopoli}
Suppose that $A$ satisfies the ellipticity condition (\ref{cond.ellipticity}). Let $(u_\varepsilon,p_\varepsilon) \in H^1(D_r;\mathbb{R}^d)\times L^2(D_r)$ be a weak solution of
$$
\left\{
\aligned
\mathcal{L}_\varepsilon (u_\varepsilon)+\nabla p_\varepsilon&=F +\text{\rm div}(h) &\ \text{ in } D_r,\\
\text{\rm div} (u_\varepsilon)&= g &\ \text{ in } D_r,\\
u_\varepsilon &=f &\ \text{ on } \Delta_r.
\endaligned
\right.
$$
Then
\begin{equation}\label{ineq.Cacciopoli}
\int_{D_{r/2}} |\nabla u_\varepsilon|^2  \le C\left\{\frac{1}{r^2} \int_{D_r} |u_\e|^2 +r^2 \int_{D_r} |F|^2+\int_{D_r} |h|^2 +\int_{D_r} |g|^2  +\|f\|^2_{H^{1/2}(\Delta_r)} \right\},
\end{equation}
where $C$ depends only on $d$, $\mu$ and $\Omega$.
\end{theorem}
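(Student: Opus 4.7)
The strategy is the classical Cacciopoli approach adapted to the Stokes system, the crucial extra ingredient being to construct a divergence-free test function so that the pressure drops out entirely.

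First I would reduce to homogeneous boundary data using the standard $H^{1/2}\!\to\!H^1$ trace extension on the Lipschitz domain $D_r$: pick $\bar f\in H^1(D_r;\R^d)$ with $\bar f|_{\Delta_r}=f$ and $\|\bar f\|_{L^2(D_r)}+r\|\nabla\bar f\|_{L^2(D_r)}\le C\|f\|_{H^{1/2}(\Delta_r)}$ (with constant uniform in $r$, verified by rescaling $D_r\to D_1$), and set $w_\e=u_\e-\bar f$. Then $w_\e=0$ on $\Delta_r$ and $(w_\e,p_\e)$ satisfies the same Stokes system in $D_r$ with modified divergence $\tilde g=g-\text{div}(\bar f)$. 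Next pick a cutoff $\varphi\in C_c^\infty(B_r)$ with $\varphi\equiv 1$ on $B_{r/2}$ and $|\nabla\varphi|\le C/r$; then $w_\e\varphi^2\in H^1_0(D_r;\R^d)$, and its divergence $\varphi^2\tilde g+2\varphi\, w_\e\cdot\nabla\varphi$ has zero mean over $D_r$ (integration by parts). By Bogovskii's lemma on $D_r$ there exists $\psi\in H^1_0(D_r;\R^d)$ with $\text{div}(\psi)=\text{div}(w_\e\varphi^2)$ and
\[
\|\nabla\psi\|_{L^2(D_r)}\le C\bigl(\|g\|_{L^2(D_r)}+\|\nabla\bar f\|_{L^2(D_r)}+r^{-1}\|w_\e\|_{L^2(D_r)}\bigr).
\]
The vector field $v:=w_\e\varphi^2-\psi$ then lies in $H^1_0(D_r;\R^d)$ with $\text{div}(v)=0$.

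Testing the Stokes system against $v$ makes the pressure term $-\int_{D_r}p_\e\,\text{div}(v)$ vanish, leaving
\[
\int_{D_r}A(x/\e)\nabla u_\e:\nabla v=\int_{D_r}F\cdot v-\int_{D_r}h:\nabla v.
\]
Expanding $\nabla v=\varphi^2\nabla u_\e-\varphi^2\nabla\bar f+(u_\e-\bar f)\otimes\nabla\varphi^2-\nabla\psi$ and invoking the strong ellipticity \eqref{cond.ellipticity} isolates $\mu\int_{D_r}\varphi^2|\nabla u_\e|^2$ as the principal quantity on the left. The remaining cross terms are controlled via Cauchy-Schwarz, Poincar\'e's inequality on $D_r$ applied to $\psi$ (admissible since $\psi\in H^1_0$), the Bogovskii bound above, the trace-extension estimate for $\bar f$, and $|\nabla\varphi|\le C/r$. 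Standard $\delta$-absorption of $\delta\int_{D_r}\varphi^2|\nabla u_\e|^2$ into the left side, combined with $\varphi\equiv 1$ on $D_{r/2}$, delivers \eqref{ineq.Cacciopoli}.

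The hard part is ensuring the Bogovskii constant is uniform in $r$: it depends on the Lipschitz character of $D_r$, which in principle could degenerate as $r$ shrinks. The standard remedy is to rescale $D_r\to D_1$, invoke Bogovskii on the unit-scale reference domain (whose Lipschitz character is inherited from $\Omega$), and scale back; the other norms scale consistently. In the interior case $B_r(x)\subset\Omega$ the surface ball $\Delta_r$ is empty, $f$ is absent, and one applies Bogovskii directly on the ball $B_r$, which is classical.
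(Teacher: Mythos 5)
This theorem is not proved in the paper at all --- it is quoted verbatim from \cite{GuShen15}, so the comparison is with the standard proof there. Your overall strategy (extend the boundary data, cut off, and correct the test function by a Bogovskii field so that it is divergence free and the pressure disappears) is a legitimate and standard route, and the preliminary steps (mean-zero of $\mathrm{div}(w_\varepsilon\varphi^2)$, uniformity of the extension and Bogovskii constants by rescaling) are fine. But there is a genuine gap at the final absorption step. The corrector $\psi$ produced by Bogovskii's lemma lies in $H^1_0(D_r)$ and is \emph{not} supported in $\{\varphi\neq 0\}$, so testing the equation with $v=w_\varepsilon\varphi^2-\psi$ produces the cross term $\int_{D_r}A(x/\varepsilon)\nabla u_\varepsilon:\nabla\psi$, which after Cauchy--Schwarz is $\delta\int_{D_r}|\nabla u_\varepsilon|^2+C_\delta\|\nabla\psi\|_{L^2(D_r)}^2$ --- note the full gradient over $D_r$, with no factor $\varphi^2$. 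This term cannot be absorbed into $\mu\int_{D_r}\varphi^2|\nabla u_\varepsilon|^2$, since $\varphi$ vanishes near $\partial B_r\cap\Omega$; your "standard $\delta$-absorption of $\delta\int\varphi^2|\nabla u_\varepsilon|^2$" only applies to the terms that carry a cutoff. What your argument actually yields is $\int_{D_{r/2}}|\nabla u_\varepsilon|^2\le\delta\int_{D_r}|\nabla u_\varepsilon|^2+C_\delta\{\dots\}$, which is weaker than (\ref{ineq.Cacciopoli}).

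The missing ingredient is the standard iteration over radii: run the same computation for every pair $r/2\le t<s\le r$, with $\varphi=1$ on $B_t$, $\operatorname{supp}\varphi\subset B_s$, $|\nabla\varphi|\le C/(s-t)$, and Bogovskii applied on $D_s$ (whose constant is uniform for $s\in[r/2,r]$ after rescaling), to get $\int_{D_t}|\nabla u_\varepsilon|^2\le\delta\int_{D_s}|\nabla u_\varepsilon|^2+C(s-t)^{-2}\int_{D_r}|u_\varepsilon|^2+C\{\text{data terms}\}$; then the classical absorption lemma for such families (e.g.\ Giaquinta's Lemma V.3.1) gives (\ref{ineq.Cacciopoli}). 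This nonlocal difficulty coming from the incompressibility constraint is exactly what distinguishes the Stokes Caccioppoli inequality from the elliptic one, and the same device (local pressure or divergence correction plus iteration in the radius) is how the cited proof in \cite{GuShen15} handles it. A minor further point: your trace extension gives $\|\nabla\bar f\|_{L^2(D_r)}^2\lesssim r^{-1}\|f\|_{L^2(\Delta_r)}^2+[f]_{H^{1/2}(\Delta_r)}^2$, so matching the exact $\|f\|^2_{H^{1/2}(\Delta_r)}$ term on the right of (\ref{ineq.Cacciopoli}) requires the (scale-adapted) convention for that norm; this is a convention issue rather than a mathematical one, but it should be stated.
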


\section{Uniform Regularity}
The goal of this section is to establish the uniform estimate for the solution $(u_\e,p_\e)$ of the general Stokes system (\ref{def.Dirichlet.Stokes}) with data in certain spaces. The following theorem is our main result of this section, which provides the local (both boundary and interior) Lipschtiz estimate for the velocity $u_\varepsilon$ and oscillation estimate for the pressure $p_\varepsilon$. To state the theorem, let $x_0\in \overline{\Omega}$ be fixed and set $D_r = D_r(x_0)$ and $\Delta_r=\Delta_r(x_0)$ for $0<r<\text{diam}(\Omega)$.

\begin{theorem}\label{thm.boundary-estimate}
	Suppose that $\Omega$ is a bounded $C^{1,\eta}$ domain and $A$ satisfies (\ref{cond.ellipticity}), (\ref{cond.periodicity}) and (\ref{cond.holder}). Assume $F \in L^p(\Omega;\R^d)$ for some $p>d$, $h\in C^{0,\eta}(\Omega;\R^{d\times d})$, $g\in C^{0,\eta}(\Omega)$ and $f\in C^{1,\eta}(\partial\Omega;\R^d)$ for some $\eta\in (0,1)$, satisfying compatibility condition (\ref{cond.compatibility}). Let $(u_\e,p_\e)$ be the weak solution of system (\ref{def.Dirichlet.Stokes}). Then for any $0<R<\rtxt{diam}(\Omega)$,
	\begin{align*}
	&\norm{\nabla u_\e}_{L^\infty(D_R)} + \osc{D_R}{p_\e}\\
	&\qquad \le C \bigg\{ \frac{1}{R} \bigg( \average_{D_{4R} } |u_\e|^2 \bigg)^{1/2} + R\bigg( \average_{D_{4R} } |F|^p \bigg)^{1/p}+  R^\eta [h]_{C^{0,\eta}(D_{4R})}+ \norm{g}_{L^\infty(D_{4R})}\\ 
	&\qquad \qquad   +  R^\eta [g]_{C^{0,\eta}(D_{4R})}+ \frac{1}{R}\norm{f}_{L^\infty(\Delta_{4R})} + \norm{\nabla_{\rtxt{tan}} f}_{L^\infty(\Delta_{4R})} + R^\eta [\nabla_{\rtxt{tan}} f]_{C^{0,\eta}(\Delta_{4R})} \bigg\},
	\end{align*}
	where $C$ depends only on $d,\, p, \, \eta, A$ and $\Omega$.
\end{theorem}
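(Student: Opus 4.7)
The plan is to follow the compactness method of Shen (see \cite{Shen17}) for the velocity Lipschitz bound, while incorporating the pressure-oscillation ideas of \cite{GuXu17}. First I would reduce to the case $g=0$, $f=0$: construct a Bogovskii-type corrector $\phi \in C^{1,\eta}(D_{4R};\R^d)$ with $\rdiv(\phi) = g - \average_{D_{4R}} g$, $\phi|_{\partial D_{4R}} = 0$, and $\norm{\phi}_{C^{1,\eta}} \lesssim \norm{g}_{L^\infty(D_{4R})} + R^\eta [g]_{C^{0,\eta}(D_{4R})}$ (with the mean of $g$ absorbed via a compatible affine field using (\ref{cond.compatibility})); then extend $f$ to a $C^{1,\eta}$ field in a collar neighborhood of $\partial\Omega$ by a bounded $C^{1,\eta}$ extension operator guaranteed by the regularity of $\partial\Omega$. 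Subtracting these auxiliary fields from $u_\e$ reduces the system to zero divergence and zero boundary data, at the cost of absorbing their contributions into modified sources $\tilde F$, $\tilde h$ of the same strength as the right-hand side of the target inequality.

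With these reductions in place, the core step is a Campanato-type excess-decay lemma. Define
\begin{equation*}
H(r) := \inf_{M \in \R^{d\times d}} \bigg\{ \frac{1}{r}\bigg(\average_{D_r}|u_\e - Mx|^2\bigg)^{1/2} + \osc{D_r}{p_\e - \pi_{\e,M}} \bigg\} + \Phi(r),
\end{equation*}
where $\pi_{\e,M}$ is the pressure produced by a linear velocity profile $Mx$ (built from correctors as in (\ref{def.corrector})) and $\Phi(r)$ collects the scale-$r$ data terms. I would prove that there exist $\theta\in (0,1/4)$ and $\sigma>0$ such that $H(\theta r) \le \theta^\sigma H(r) + C\Phi(r)$ whenever $\e \le r \le c R$, via contradiction/compactness: a failing normalized and rescaled sequence would converge (weakly in $H^1\times L^2$ thanks to Theorem \ref{thm.energy-estimate}) through the H-convergence for Stokes systems reviewed in Section 2 to a solution of the constant-coefficient homogenized system, whose $C^{1,\eta}$ regularity forces a decay contradicting the failure. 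For scales $r<\e$, rescaling reduces matters to the classical $C^{1,\eta}$ estimate for the $\e$-free operator $\cL_1$ applied to Stokes systems with data in (\ref{cond.holder}).

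Iterating the excess decay from $R$ down to $\e$ and summing the resulting geometric series yields both $\norm{\nabla u_\e}_{L^\infty(D_R)}$ and $\osc{D_R}{p_\e}$, controlled by the stated right-hand side. The main obstacle will be the pressure: to obtain an honest oscillation bound (rather than a weaker $BMO$ or $L^\infty$-of-mean-zero-part bound) one must track pressure differences at every scale, since mean-value normalizations do not telescope cleanly under the iteration. To handle this I would, at each scale, compare $p_\e$ to a single reference value chosen in a fixed interior subball by integrating $\nabla p_\e = F + \rdiv(h) - \cL_\e(u_\e)$ along chained short paths, while carefully accounting for the corrector part $\pi_{\e,M}$. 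The Hölder hypothesis $h\in C^{0,\eta}$ (rather than merely $L^\infty$) is essential here, because only then does the $\rdiv(h)$ contribution yield an oscillation of order $r^\eta [h]_{C^{0,\eta}}$ rather than an uncontrolled $L^\infty$ mass; likewise the $C^{0,\eta}$ hypothesis on $g$ is needed so that the Bogovskii reduction itself contributes an oscillation-type rather than merely $L^\infty$-type error. This pressure bookkeeping is the technically delicate step, and sets up the oscillation-based Green's function estimates (\ref{est.PieOsc})--(\ref{est.DPieOsc}) used later.
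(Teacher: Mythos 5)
There are two genuine gaps. First, the reduction to $g=0$, $f=0$ does not work as claimed. If you subtract a Bogovskii-type field $\phi$ (or a $C^{1,\eta}$ extension of $f$) from $u_\e$, the new right-hand side is $\tilde h = h + A(x/\e)\nabla\phi$, and since $A(x/\e)$ oscillates at scale $\e$ one has $[\tilde h]_{C^{0,\eta}} \sim \e^{-\eta}\norm{\nabla\phi}_{L^\infty} + [\nabla\phi]_{C^{0,\eta}}$. Thus $\tilde h$ is \emph{not} ``of the same strength as the right-hand side of the target inequality'': feeding it back into the desired estimate produces a factor $(R/\e)^{\eta}$, so the constants are no longer uniform in $\e$ (at best one recovers a logarithmically degraded bound of the type in Theorem \ref{thm.modified-boundary-estimate}, whose proof in turn needs the Green's function bounds that rest on this very theorem). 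The only way to salvage the extra term is to exploit its special structure $A^\e\nabla(\text{smooth field}) = -$flux of $\cL_\e(\phi)$, which amounts to not doing the reduction at all; this is why the paper keeps $g$ and $f$ inside the excess quantities $H(t;v)$ and $\Theta(t)$ (subtracting $\tdiv(Mx)$ and $Mx+q$) throughout the iteration rather than removing them at the start.

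Second, the pressure part is not viable as described. Integrating $\nabla p_\e = F + \rdiv(h) - \cL_\e(u_\e)$ along chained paths requires pointwise control of $\tdiv(A^\e\nabla u_\e)$, i.e.\ essentially of $\nabla^2 u_\e$, which is of size $\e^{-1}$ and admits no uniform estimate; moreover an oscillation (sup-type) term inside a compactness excess-decay does not pass to weak $H^1\times L^2$ limits, so the contradiction argument cannot close for $\osc{D_r}{p_\e-\pi_{\e,M}}$. The paper's mechanism is different and quantitative: at each scale it approximates $p_\e$ in $L^2$ by $\tau + \pi^\e\nabla v$ with an $(\e/s)^\gamma$ rate (Theorem \ref{thm.convergence-rates}), controls the averages of $\pi^\e\nabla v$ using $\int_Y \pi = 0$ and interior Schauder bounds for the homogenized solution, sums the dyadic differences of averages $K(r;p_\e)$ with a carefully chosen intermediate scale $s(r,\e)$, and handles scales below $\e$ by blow-up. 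Your velocity argument by Avellaneda--Lin-type compactness (instead of the paper's convergence-rate iteration via \cite[Lemma 8.5]{Shen17}) is a legitimate alternative in principle, but as written it inherits the flawed reduction above, and the pressure bookkeeping — which is the genuinely new and delicate point of this theorem — is missing a workable mechanism.
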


To prove the above theorem, we follow a line of \cite{Shen17} with some useful techniques from \cite{GuXu17}, which relies essentially on a (positive power) rate of convergence for system (\ref{def.Dirichlet.Stokes}). To focus on our main result, we will just state the necessary theorem below and postpone its technical proof to Appendix.

\begin{theorem}\label{thm.convergence-rates}
	Suppose $\Omega$ is a bounded $C^1$ domain and $A$ satisfies (\ref{cond.ellipticity}), (\ref{cond.periodicity}) and (\ref{cond.holder}). Assume $F \in L^p(\Omega;\R^d)$ for some $p>d$, $h\in C^{0,\eta}(\Omega;\R^{d\times d})$, $g\in C^{0,\eta}(\Omega)$ for some $\eta \in (0,1/2]$ and $f\in H^1(\partial\Omega;\R^d)$, satisfies the compatibility condition (\ref{cond.compatibility}). Let $\varepsilon\rightarrow 0$, the system (\ref{def.Dirichlet.Stokes}) homogenizes to the following effective system
	\begin{equation}\label{eq.generalL0}
	\left\{
	\begin{aligned}
	\cL_0(u_0)+\nabla p_0 &= F + \rdiv(h) &\qquad \text{ in }\Omega,\\
	\rdiv(u_0) &=g &\qquad\text{ in }\Omega,\\
	u_0& =f &\qquad\text{ on }\partial\Omega.
	\end{aligned}
	\right.	
	\end{equation}
	Moreover, there exists some $\gamma = \gamma(d,\eta)>0$ such that,
	\begin{equation}\label{ineq.convergence-rates}
	\begin{aligned}
	&\norm{u_\e - u_0}_{L^2(\Omega)} + \norm{p_\e - p_0 - \pi^\e \nabla u_0}_{L^2_0(\Omega)} \\
	&\qquad \le C\e^\gamma \Big(\norm{F}_{L^p(\Omega)} + \norm{h}_{C^{0,\eta}(\Omega)} + \norm{g}_{C^{0,\eta}(\Omega)} + \norm{f}_{H^1(\partial\Omega)} \Big),
	\end{aligned}
	\end{equation}
	where $C$ depends only on $d, p, \eta, \gamma, A$ and $\Omega$.
\end{theorem}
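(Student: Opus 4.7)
\emph{Plan.} I follow the two-scale expansion approach (see \cite{JikovKozlovOleinik94}), adapted to the Stokes setting via Lemma \ref{lem.w-system}. Theorem \ref{thm.energy-estimate} gives existence and uniqueness of $(u_0,p_0)$ for the constant-coefficient system (\ref{eq.generalL0}); since $\widehat{A}$ is constant, interior elliptic regularity yields $u_0 \in H^2_{\rtxt{loc}}(\Omega)$ and $p_0 \in H^1_{\rtxt{loc}}(\Omega)$. The datum $f \in H^1(\partial\Omega)$ (not $H^{3/2}$) prevents $u_0 \in H^2(\Omega)$, so the main substitute will be a weighted second-derivative bound
\begin{equation*}
\int_\Omega \rtxt{dist}(x,\partial\Omega)^{\kappa}\left(|\nabla^2 u_0|^2 + |\nabla p_0|^2\right) dx \le C\,(\rtxt{data})^2, \qquad \kappa \in (0,1),
\end{equation*}
obtained by Caccioppoli estimates on boundary annuli combined with interior $H^2$ theory for the constant-coefficient Stokes system.

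Next I set
\begin{equation*}
w_\e(x) = u_\e(x) - u_0(x) - \e\,\chi_j^\beta(x/\e)\,\eta_\e(x)\,S_\e\!\left(\frac{\partial u_0^\beta}{\partial x_j}\right)(x),
\end{equation*}
where $\eta_\e \in C_c^\infty(\Omega)$ vanishes within distance $\e^\theta$ of $\partial\Omega$ and $S_\e$ is a Steklov smoothing at scale $\e^\theta$, for some $\theta \in (0,1)$ to be optimized. Taking $V_{\e,j}^\beta = \e\,\chi_j^\beta(\cdot/\e) + P_j^\beta$ and $T_{\e,j}^\beta = \pi_j^\beta(\cdot/\e)$ in Lemma \ref{lem.w-system} annihilates the last two summands of (\ref{eq.w-system}); consequently $w_\e$, paired with the pressure corrector
\begin{equation*}
\Pi_\e := p_\e - p_0 - \pi_j^\beta(\cdot/\e)\,\frac{\partial u_0^\beta}{\partial x_j} - \e\,q_{ij}^\beta(\cdot/\e)\,\frac{\partial^2 u_0^\beta}{\partial x_i\partial x_j},
\end{equation*}
solves a Stokes system whose right-hand side decomposes into (a) a divergence of size $\e\,|\nabla^2 u_0|$ controlled in weighted $L^2$, (b) a commutator from $\nabla\eta_\e$ supported in a tube of width $\e^\theta$ around $\partial\Omega$, and (c) a smoothing error $\|S_\e\varphi - \varphi\|_{L^2} \le C\e^\theta \|\varphi\|_{H^1}$. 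After correcting the trace and divergence of $w_\e$ by a Bogovskii-type construction (itself of size $O(\e)$), Theorem \ref{thm.energy-estimate} applied to the system for $(w_\e,\Pi_\e)$, combined with the weighted bound, yields
\begin{equation*}
\|w_\e\|_{H^1(\Omega)} + \|\Pi_\e\|_{L^2_0(\Omega)} \le C\,\e^\gamma\,(\rtxt{data})
\end{equation*}
upon balancing $\theta$ against $\kappa$. Absorbing the explicit $O(\e)$ corrector in the $L^2$ norm and identifying $\pi^\e\nabla u_0 = \pi_j^\beta(\cdot/\e)\,\partial_j u_0^\beta$ modulo terms controlled by $S_\e$ produces the desired inequality.

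\emph{Main obstacle.} The principal difficulty is the low boundary regularity forced by $f \in H^1(\partial\Omega)$: without $u_0 \in H^2(\Omega)$ the nominal $O(\e)$ corrector error cannot be absorbed directly in $L^2$, and the cutoff/weight trade-off is precisely what produces $\gamma < 1$. Two further subtleties are specific to the Stokes case. First, the pressure corrector contains the singular term $\e\,q_{ij}^\beta(x/\e)\partial_i\partial_j u_0^\beta$, controlled only in weighted $L^2$, so recovering the $L^2_0(\Omega)$ bound requires pairing the Ne\v{c}as inequality with a weighted Hardy bound. Second, the divergence-free constraint forces a Bogovskii correction to $w_\e$ which must itself be shown to be $O(\e)$. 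Tracking these three errors simultaneously pins down the exponent $\gamma = \gamma(d,\eta) > 0$, which is not optimal but suffices for the applications later in the paper.
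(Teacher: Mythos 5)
Your route is genuinely different from the paper's. The paper does not re-derive a corrector estimate at all: it takes the already-established $C^1$-domain convergence rate of \cite{GuXu17} (restated as Theorem \ref{thm.C1Rate}, which requires the smoother data $g\in W^{1,p}$, $f\in C^{0,1}$), mollifies $h,g,f$ at a scale $r$, applies that theorem to the mollified problem (whose norms blow up like $r^{\eta-1}$ and $r^{(1-d)/2}$), controls the perturbation of the data by the energy estimate (\ref{ineq.energy}) (gaining $r^{\eta}$ and $r^{1/2}$), and optimizes $r=\e^{2\sigma/d}$ to get $\gamma=2\sigma\eta/d$. Your proposal instead re-proves a rate from scratch via the two-scale expansion of Lemma \ref{lem.w-system} with a boundary cutoff at scale $\e^{\theta}$, Steklov smoothing, and a Bogovskii correction. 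That scheme is a legitimate alternative in principle, but as written it contains a genuine gap.

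The gap is the weighted second-derivative bound $\int_\Omega \delta^{\kappa}\big(|\nabla^2u_0|^2+|\nabla p_0|^2\big)\le C(\text{data})^2$ with $\kappa\in(0,1)$, which you claim follows from ``Caccioppoli estimates on boundary annuli combined with interior $H^2$ theory.'' Caccioppoli on a dyadic layer $\Sigma_j=\{2^{-j-1}<\delta\le 2^{-j}\}$ gives $\int_{\Sigma_j}|\nabla^2u_0|^2\lesssim 2^{2j}\int_{\widetilde\Sigma_j}|\nabla u_0|^2+\dots$, and with only the energy bound $\nabla u_0\in L^2(\Omega)$ this sums to a weighted estimate only for $\kappa\ge 2$; even with nontangential-maximal control of $\nabla u_0$ (which itself requires solvability of the $L^2$ regularity problem for the constant-coefficient Stokes system in $C^1$ domains, an ingredient you never invoke) one only reaches $\kappa>1$, and $\kappa=1$ is the genuine square-function estimate. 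This matters because $\kappa=2$ is exactly the borderline at which several of your error terms lose their positive power: e.g.\ the smoothing error $\|S_\e\nabla u_0-\nabla u_0\|_{L^2(\{\delta>\e^{\theta}\})}\lesssim \e^{\theta}\|\nabla^2u_0\|_{L^2(\{\delta>\e^{\theta}/2\})}\lesssim\e^{\theta(1-\kappa/2)}$, and the pressure comparison $\|\pi^\e(\nabla u_0-\eta_\e S_\e\nabla u_0)\|_{L^2}$ requires an algebraic decay of $\int_{\{\delta<\e^{\theta}\}}|\nabla u_0|^2$, which plain energy bounds do not provide. So to close the argument you must either prove or quote boundary estimates (nontangential maximal function or square function bounds) for the constant-coefficient Stokes system with $H^1(\partial\Omega)$ data on $C^1$ domains; the vague appeal to ``Ne\v{c}as plus weighted Hardy'' does not supply this. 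The paper's mollification-plus-stability argument sidesteps all of these boundary-regularity issues, at the cost of accepting a non-explicit, suboptimal $\gamma$.
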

\begin{proof}
	The theorem is a corollary of \cite[Theorem 3.1]{GuXu17} and a general interpolation argument. See Appendix for details.
\end{proof}

Now we are ready to prove Theorem \ref{thm.boundary-estimate}.

\begin{proof}[Proof of Theorem \ref{thm.boundary-estimate}]

Part (i): Lipschitz estimate for the velocity $v_\varepsilon$.

Step 1: Set-up. Without loss of generality, we may assume $h(x_0)=0$ and $R> 10\e$. We define the following auxiliary quantities:
\begin{equation}\label{def.H}
\begin{aligned}
H(t;v)&=\frac{1}{t}\inf\limits_{\substack{M\in \mathbb{R}^{d\times d}\\ q\in\mathbb{R}^d}}\bigg\{\bigg( \average_{D_{t} } |v-Mx-q|^2 \bigg)^{1/2} + t^2\bigg( \average_{D_{t} } |F|^p \bigg)^{1/p}+  t^{1+\eta} [h]_{C^{0,\eta}(D_{t})}\\
&\quad   + t\norm{g-\text{div}(Mx)}_{L^\infty(D_{t})} +  t^{1+\eta} [g-\text{div}(Mx)]_{C^{0,\eta}(D_{t})} +\|f-Mx-q\|_{L^\infty(\Delta_t)}\\
&\quad+ t\norm{\nabla_{\rtxt{tan}} (f-Mx-q)}_{L^\infty(\Delta_{t})} + t^{1+\eta} [\nabla_{\rtxt{tan}} (f-Mx-q)]_{C^{0,\eta}(\Delta_{t})} \bigg\},
\end{aligned}
\end{equation}
and
\begin{equation}\label{def.Theta}
\begin{aligned}
\Theta(t)&=\frac{1}{t}\inf\limits_{q\in\mathbb{R}^d}\bigg\{\bigg( \average_{D_{t} } |u_\varepsilon-q|^2 \bigg)^{1/2} + t^2\bigg( \average_{D_{t} } |F|^p \bigg)^{1/p}+  t^{1+\eta} [h]_{C^{0,\eta}(D_{t})}+ t\norm{g}_{L^\infty(D_{t})}\\
&\quad\quad    +  t^{1+\eta} [g]_{C^{0,\eta}(D_{t})}+\|f-q\|_{L^\infty(\Delta_t)} + t\norm{\nabla_{\rtxt{tan}} (f)}_{L^\infty(\Delta_{t})} + t^{1+\eta} [\nabla_{\rtxt{tan}} (f)]_{C^{0,\eta}(\Delta_{t})} \bigg\}.
\end{aligned}
\end{equation}

Step 2: For any fixed $r\in [\e,R]$, we show that there exists a weak solution $(v,\tau)$ of 
\begin{equation}\label{def.v-system-Dr}
\left\{
\begin{aligned}
\mathcal{L}_0(v)+\nabla \tau &= F +\text{div}(h)&\qquad \text{ in }D_r,\\
\text{div}(v) &=g &\qquad\text{ in }D_r,\\
v& =f &\qquad\text{ on }\Delta_r,
\end{aligned}
\right.	
\end{equation}
such that 
\begin{equation}\label{ineq.step2}
\bigg(\average_{D_r} |u_\varepsilon-v|^2\bigg)^{1/2}\le C\varepsilon^\gamma r^{1-\gamma}\Theta(2r).
\end{equation}
Actually, by an argument of rescaling, it suffices to show (\ref{ineq.step2}) with $r = 1$. First, since $(u_\varepsilon,p_\varepsilon)$ satisfies (\ref{def.Dirichlet.Stokes}) in $D_2$, it follows from the Cacciopoli's inequality (\ref{ineq.Cacciopoli}) that
\begin{equation*}
\int_{D_{3/2}} |\nabla u_\varepsilon|^2 \le C\left\{\int_{D_2} |u_\varepsilon|^2+\int_{D_2} |F|^2+\int_{D_2} |h|^2+\int_{D_2} |g|^2+\|f\|^2_{H^{1/2}(\Delta_2)}\right\}.
\end{equation*}
By the co-area formula, there exists\footnote{The construction of $\widetilde{D}$ should preserve the $C^1$ character of $\Delta_2$, as do all the later constructions of the intermediate domains.} some $C^1$ domain $\widetilde{D}$ such that $D_{3/2}\subset \widetilde{D}\subset D_2$ and
\begin{equation*}
\int_{\partial{\widetilde{D}}\backslash \Delta_2} (|\nabla u_\varepsilon|^2+|u_\varepsilon|^2) \le C\left\{\int_{D_2} |u_\varepsilon|^2+\int_{D_2} |F|^2+\int_{D_2} |h|^2+\int_{D_2} |g|^2+\|f\|^2_{H^{1/2}(\Delta_2)}\right\}.
\end{equation*}
Let $(v,\tau)$ be a weak solution of 
\begin{equation*}
\left\{
\begin{aligned}
\mathcal{L}_0(v)+\nabla \tau &= F +\text{div}(h)&\qquad \text{ in }\widetilde{D},\\
\text{div}(v) &=g &\qquad\text{ in }\widetilde{D},\\
v& =u_\varepsilon &\qquad\text{ on }\partial \widetilde{D}.
\end{aligned}
\right.	
\end{equation*}
Now by applying Theorem \ref{thm.convergence-rates}, we obtain that
\begin{equation}\label{ineq.proof.Step2}
\begin{aligned}
\|u_\varepsilon-v\|_{L^2(D_1)}&\le \|u_\varepsilon-v\|_{L^2(\widetilde{D})} \\
&\le C\varepsilon^\gamma \big(\|u_\varepsilon\|_{H^1(\partial \widetilde{D})}+\|F\|_{L^p(\widetilde{D})}+[h]_{C^{0,\eta}(\widetilde{D}_t)}+\|g\|_{C^{0,\eta}(\widetilde{D})}\big)\\
&\le C\varepsilon^\gamma \big(\|u_\varepsilon\|_{L^2(D_2)}+\|F\|_{L^p(D_2)}+[h]_{C^{0,\eta}(D_2)}+\|g\|_{C^{0,\eta}(D_2)}+\|f\|_{C^{1,\eta}(\Delta_2)}\big).
\end{aligned}
\end{equation}
Then (\ref{ineq.step2}) holds true by applying (\ref{ineq.proof.Step2}) to $u_\varepsilon-q$ for $q\in \mathbb{R}^d$ (because $u_\e - q$ is also a solution) and taking minimum over all $q\in \R^d$.\\

Step 3: We prove that there exists $\theta\in (0,1/4)$, depending only on $\mu$, $d$, $\eta$ and $\Omega$, such that
\begin{equation}\label{ineq.step3}
H(\theta r;v)\le (1/2)H(r;v), \quad\text{ for any } r\in(0,R).
\end{equation}
Again, by rescaling, we may assume $r=1$. For any $\theta\in (0,1/4)$, by choosing $q=v(x_0)$ and $M=\nabla v(x_0)$, it is easy to see that 
\begin{equation*}
H(\theta;v)\le C\theta^{\eta_1}\bigg\{ \|v\|_{C^{1,\eta}(D_\theta)} +\bigg( \average_{D_{1} } |F|^p \bigg)^{1/p}+[h]_{C^{0,\eta}(D_1)}\bigg\},
\end{equation*}
where $\eta_1=\min\{\eta,1-d/p\}$. By the boundary $C^{1,\eta}$ estimate for Stokes system with constant coefficients \cite{GiaquintaModica82}, we have
\begin{equation*}
\begin{aligned}
\|v\|_{C^{1,\eta}(D_\theta)}&\le C\bigg\{\bigg( \average_{D_{1} } |v|^2 \bigg)^{1/2} + \bigg( \average_{D_{1} } |F|^p \bigg)^{1/p}+  [h]_{C^{0,\eta}(D_{1})}+ \norm{g}_{L^\infty(D_{1})} +  [g]_{C^{0,\eta}(D_{1})}\\
&\quad\qquad\qquad\qquad   +\|f\|_{L^\infty(\Delta_1)} + \norm{\nabla_{\rtxt{tan}} (f)}_{L^\infty(\Delta_{1})} + [\nabla_{\rtxt{tan}} (f)]_{C^{0,\eta}(\Delta_{1})} \bigg\}.
\end{aligned}
\end{equation*}
It follows that
\begin{equation}\label{ineq.H-growth-intermediate}
\begin{aligned}
H(\theta;v)&\le  C\theta^{\eta_1}\bigg\{\bigg( \average_{D_{1} } |v|^2 \bigg)^{1/2} + \bigg( \average_{D_{1} } |F|^p \bigg)^{1/p}+  [h]_{C^{0,\eta}(D_{1})}+ \norm{g}_{L^\infty(D_{1})} +  [g]_{C^{0,\eta}(D_{1})}\\
&\qquad\qquad\qquad\qquad  +\|f\|_{L^\infty(\Delta_1)} + \norm{\nabla_{\rtxt{tan}} (f)}_{L^\infty(\Delta_{1})} + [\nabla_{\rtxt{tan}} (f)]_{C^{0,\eta}(\Delta_{1})} \bigg\}.
\end{aligned}
\end{equation}
Now for any $q\in\mathbb{R}^d$ and $M\in \mathbb{R}^{d\times d}$, $(v-Mx-q,\tau)$ also satisfies the main system in (\ref{def.v-system-Dr}). Thus, applying (\ref{ineq.H-growth-intermediate}) to $v-Mx-q$, we obtain
\begin{equation*}
H(\theta;v)\le C\theta^{\eta_1}H(1;v),
\end{equation*}
which leads to our desired result (\ref{ineq.step3}) by fixing $\theta\in (0,1/4)$ so small that $C\theta^{\eta_1}\le 1/2$.\\

Step 4: Next, we show that for any $r \in [\e,R/2]$,
\begin{equation}\label{ineq.step4}
H(\theta r;u_\varepsilon) \le \frac{1}{2}H(r;u_\varepsilon)+C\Big(\frac{\varepsilon}{r}\Big)^{\gamma}\Theta(2r).
\end{equation}
To see this, we fix $r\in [\varepsilon, R/2]$, and let $(v,\tau)$ be the solution given in Step 1.
Then,
\begin{equation*}
\begin{aligned}
H(\theta r; u_\varepsilon) &\le \frac{1}{\theta r}\bigg(\average_{D_{\theta r}}|u_\varepsilon-v|^2\bigg)^{1/2}+H(\theta r;v)\\
&\le \frac{C}{r}\bigg(\average_{D_{r}}|u_\varepsilon-v|^2\bigg)^{1/2}+\frac{1}{2}H(r;v)\\
&\le \frac{1}{2}H(r;u_\varepsilon)+\frac{C}{r}\bigg(\average_{D_{r}}|u_\varepsilon-v|^2\bigg)^{1/2}\\
&\le \frac{1}{2}H(r;u_\varepsilon)+ C\Big(\frac{\varepsilon}{r}\Big)^\gamma\Theta(2r),
\end{aligned}
\end{equation*}
where we have used (\ref{ineq.step3}) and (\ref{ineq.step2}) in the second and last inequality, respectively.\\

Step 5: Now we proceed to prove the local Lipschitz estimate for the velocity $u_\e$. We claim that it suffices to show that for any $0<r \le R/2$,
\begin{equation}\label{ineq.step5}
	\Theta(r) \le C \Theta(R).
\end{equation}
Indeed, if (\ref{ineq.step5}) holds true, by the Cacciopoli's inequality (\ref{ineq.Cacciopoli}), we have
\begin{equation*}
\bigg(\average_{D_r}|\nabla u_\varepsilon|^2\bigg)^{1/2} \le C \Theta(R),
\end{equation*}
which, by letting $r\rightarrow 0$, implies $|\nabla u_\varepsilon(x_0)|\le C\Theta(R)$, for the given $x_0\in D_R$. Following by the same argument, we can further show that for any $x\in D_R$, $|\nabla u_\varepsilon(x)|\le C\Theta(4R)$. Hence, $\norm{\nabla u_\e}_{L^\infty(D_R)} \le C\Theta(4R)$ is true as desired. 

The proof of (\ref{ineq.step5}) is based on \cite[Lemma 8.5]{Shen17}, of which the conditions need to be verified. We may assume $0<\varepsilon<1/4$ and recall the definitions of $H(t;u_\e)$ and $\Theta(t)$. Let $M_{t,\varepsilon}$ be the $d\times d$ matrix such that,
\begin{equation*}
\begin{aligned}
H(t;u_\varepsilon)&=\frac{1}{t}\inf_{q\in\mathbb{R}^d}\bigg\{\bigg( \average_{D_{t} } |u_\varepsilon-M_{t,\varepsilon}x-q|^2 \bigg)^{1/2} + t^2\bigg( \average_{D_{t} } |F|^p \bigg)^{1/p}+  t^{1+\eta} [h]_{C^{0,\eta}(D_{t})}\\
&\qquad\qquad + t\norm{g-\text{div}(M_{t,\varepsilon}x)}_{L^\infty(D_{t})} +  t^{1+\eta} [g-\text{div}(M_{t,\varepsilon}x)]_{C^{0,\eta}(D_{t})} \\
&\qquad\qquad+\|f-M_{t,\varepsilon}x-q\|_{L^\infty(\Delta_t)}+ r\norm{\nabla_{\rtxt{tan}} (f-M_{t,\varepsilon}x-q)}_{L^\infty(\Delta_{t})}\\
&\qquad\qquad + t^{1+\eta} [\nabla_{\rtxt{tan}} (f-M_{t,\varepsilon}x-q)]_{C^{0,\eta}(\Delta_{t})} \bigg\}.
\end{aligned}
\end{equation*}
Observe that $\Theta(t)\le H(t;u_\varepsilon)+C|M_{t,\varepsilon}|$. Hence, (\ref{ineq.step4}) can be written as
\begin{equation}\label{ineq.step5-condition3}
H(\theta r;u_\varepsilon)\le \frac{1}{2}H(r;u_\varepsilon)+C\omega(\varepsilon/r)\left\{H(2r;u_\varepsilon)+|M_{2r,\varepsilon}|\right\},
\end{equation}
for any $r\in[\varepsilon,R/2]$, where $\omega(t)=t^\gamma$.

Now, to apply \cite[Lemma 8.5]{Shen17}, we need to verify
\begin{equation}\label{ineq.step5-condition}
\max\limits_{r\le t \le 2r} H(t;u_\varepsilon) + \max\limits_{r\le t,s \le 2r} \big||M_{t,\varepsilon}|-|M_{s,\varepsilon}|\big| \le C H(2r;u_\varepsilon).
\end{equation}
The first part of (\ref{ineq.step5-condition}) is obvious. For any $t,s\in [r,2r]$, the second part follows by
\begin{equation*}
\begin{aligned}
|M_{t,\varepsilon}-M_{s,\varepsilon}|& \le \inf_{q\in\mathbb{R}^d}\frac{C}{r}\bigg(\average_{D_r}|(M_{t,\varepsilon}-M_{s,\varepsilon})x-q|^2 \bigg)^{1/2}\\
&\le \frac{C}{t}\inf_{q\in\mathbb{R}^d}\bigg(\average_{D_t}|u_\varepsilon -M_{t,\varepsilon}x-q|^2 \bigg)^{1/2}+\frac{C}{s}\inf_{q\in\mathbb{R}^d}\bigg(\average_{D_s}|u_\varepsilon-M_{s,\varepsilon}x-q|^2 \bigg)^{1/2}\\
&\le C\{H(t;u_\varepsilon)+H(s;u_\varepsilon)\}\\
&\le C H(2r;u_\varepsilon).
\end{aligned}
\end{equation*}
Consequently, by applying \cite[Lemma 8.5]{Shen17}, we obtain that for $\varepsilon\le r\le R/2$, 
\begin{equation}\label{ineq.Theta.rR}
\begin{aligned}
\Theta(r) \le H(r;u_\varepsilon)+C|M_{r,\varepsilon}| \le C\{H(R;u_\varepsilon)+|M_{R,\varepsilon}|\}.
\end{aligned}
\end{equation}

Finally, observe that $H(R;u_\e) \le C\Theta(R)$ and
\begin{equation*}
\begin{aligned}
|M_{R,\e}| &\le \frac{C}{R} \inf_{q\in \R^d} \bigg( \average_{D_{R} } |M_{R,\e}x+q|^2 \bigg)^{1/2} \\
& \le \frac{C}{R} \bigg\{ \inf_{q\in \R^d} \bigg( \average_{D_{R} } |u_\e - M_{R,\e}x - q|^2 \bigg)^{1/2} + \inf_{q\in \R^d} \bigg( \average_{D_{R} } |u_\e-q|^2 \bigg)^{1/2} \bigg\} \\
& \le CH(R;u_\e) + C\Theta(R) \\
&\le C\Theta(R).
\end{aligned}
\end{equation*}
These, combined with (\ref{ineq.Theta.rR}), gives (\ref{ineq.step5}) for $\e \le r\le R/2$. The case $0<r<\varepsilon$ can be handled by a blow-up argument. The proof of (\ref{ineq.step5}) is complete.\\

Part (ii): Oscillation estimate for the pressure $p_\varepsilon$.

Step 1: Since $\Omega$ is a $C^1$ domain, we can construct a sequence of $C^1$ domains $\{\widetilde{D}_t\}$ such that $D_t\subset \widetilde{D}_t\subset D_{2t}$. For any $\varepsilon \le r \le R/4$, let $(v,\tau)$ be the solution of
$$
\left\{
\begin{aligned}
\mathcal{L}_0(v)+\nabla \tau &= F +\text{div}(h)&\qquad \text{ in }\widetilde{D}_s,\\
\text{div}(v) &=g &\qquad\text{ in }\widetilde{D}_s,\\
v& =u_\varepsilon &\qquad\text{ on }\partial \widetilde{D}_s,
\end{aligned}
\right.	
$$
for some $s=s(r,\varepsilon) \in (r,R/4)$ to be chosen later. Then, we show that
\begin{equation}\label{ineq.p.step1}
\|p_\varepsilon-\tau-\pi^\varepsilon\nabla v\|_{L^2_0(D_{r})}\le Cs^{d/2}\Big(\frac{\varepsilon}{s}\Big)^\gamma \Theta(R).
\end{equation}
To this end, note that by Part (i), we know $u_\e \in C^{0,1}(D_R)$ and hence $u_\varepsilon\in H^1(\partial \widetilde{D}_s)$. It follows from Theorem \ref{thm.convergence-rates} and a similar argument as in the proof of (\ref{ineq.step2}) that
\begin{equation*}
\begin{aligned}
\|p_\varepsilon-\tau-\pi^\varepsilon\nabla v\|_{L^2_0(D_{r})}&\le \|p_\varepsilon-\tau-\pi^\varepsilon\nabla v\|_{L^2_0(\widetilde{D}_s)}\\
&\le C\Big(\frac{\varepsilon}{s}\Big)^\gamma \big(s^{d/2}\Theta(2s)\big)\\
&\le Cs^{d/2}\Big(\frac{\varepsilon}{s}\Big)^\gamma \Theta(R),
\end{aligned}
\end{equation*}
where we have used (\ref{ineq.step5}) in the last inequality.\\

Step 2: We prove that, for any $\varepsilon \le r \le R/4$,
\begin{equation}\label{ineq.p.step2}
K(r;p_\varepsilon)\le 
\left\{
\begin{aligned}
&C\Big(\frac{\varepsilon}{r}\Big)^{\sigma}\Theta(R), &\text{ if } r<s_0,\\
&C\bigg\{\Big(\frac{R}{r}\Big)^{d/2}\Big(\frac{\varepsilon}{R}\Big)^{\gamma}+\Big(\frac{r}{R}\Big)^{\eta}+\Big(\frac{\varepsilon}{r}\Big)\bigg\}\Theta(R), &\text{ if } r\ge s_0,
\end{aligned}
\right.
\end{equation}
where $\sigma=2\gamma\eta/(d-2\gamma+2\eta)$, $s_0=R^{\frac{d-2\gamma+2\eta}{d+2\eta}}\varepsilon^{\frac{2\gamma}{d+2\eta}}$ and $K$ is defined by
\begin{equation}\label{def.K}
K(r;\varphi)=\sup_{r\le t\le 2r}\left|\average_{{D}_t} \varphi-\average_{{D}_{2r}} \varphi\right|.
\end{equation}
By (\ref{ineq.p.step1}) and the local $C^{0,\eta}$ estimate for the pressure of Stokes systems with constant coefficients (see \cite{GiaquintaModica82}), we have
\begin{equation}\label{ineq.triangle-K}
\begin{aligned}
K(r;p_\varepsilon) & \le K(r;p_\varepsilon-\tau-\pi^\varepsilon\nabla v)+K(r;\tau)+K(r;\pi^\varepsilon\nabla v)\\
& \le Cr^{-d/2}\|p_\varepsilon-\tau-\pi^\varepsilon\nabla v\|_{L^2_0(\widetilde{D}_{2r})} + K(r;\tau-\tau(x_0)) + K(r;\pi^\varepsilon\nabla v) \\
& \le C\Big(\frac{s}{r}\Big)^{d/2}\Big(\frac{\varepsilon}{s}\Big)^\gamma\Theta(R) + Cr^\eta[\tau]_{C^{0,\eta}({D}_{2r})} + K(r;\pi^\varepsilon\nabla v).
\end{aligned}
\end{equation}
To estimate $K(r;\pi^\varepsilon\nabla v)$, we let $ S_t=\{z\in\mathbb{Z}^d : \varepsilon\{Y+z\}\subset {D}_{t}\}$ and observe that
\begin{equation}\label{decompose.Dr}
{D}_{t}\backslash\bigcup_{z\in S_t} \varepsilon \{Y+ z\}\subset E_{t,\varepsilon}:=\big\{ x\in {D}_{t}| \text{ dist}(x,\partial {D}_{t})\le \sqrt{d}\varepsilon \big\}.
\end{equation}
Recall that $\int_Y \pi(y)\, dy=0$, then one has
\begin{equation*}
\begin{aligned}
\left|\average_{{D}_{t}}\pi^\varepsilon\nabla v\right|&\le \frac{1}{|{D}_t|}\bigg\{\sum_{z\in S_t}\int_{\varepsilon\{Y+z\}}\big|\pi^\varepsilon\big[\nabla v-\nabla v(z)\big]\big|+\int_{E_{t,\varepsilon}}|\pi^\varepsilon\nabla v|\bigg\}\\
&\le C\left\{\varepsilon^\eta[\nabla v]_{C^{0,\eta}({D}_{t})}+\frac{\varepsilon}{t}\|\nabla v\|_{L^\infty({D}_{t})}\right\}.
\end{aligned}
\end{equation*}
Therefore
\begin{equation}\label{ineq.K3}
\begin{aligned}
K(r;\pi^\varepsilon\nabla v)&\le \sup_{r\le t\le 2r} \left|\average_{{D}_{t}}\pi^\varepsilon\nabla v\right|+\left|\average_{{D}_{2r}}\pi^\varepsilon\nabla v\right|\\
&\le C\left\{\varepsilon^\eta[\nabla v]_{C^{0,\eta}({D}_{2r})}+\frac{\varepsilon}{r}\|\nabla v\|_{L^\infty({D}_{2r})}\right\}.
\end{aligned}
\end{equation}
Combining (\ref{ineq.triangle-K}) and (\ref{ineq.K3}), we obtain
\begin{equation}\label{ineq.K.rpe}
K(r;p_\varepsilon)\le C\bigg\{\Big(\frac{s}{r}\Big)^{d/2}\Big(\frac{\varepsilon}{s}\Big)^\gamma\Theta(R)+r^\eta[\tau]_{C^{0,\eta}({D}_{2r})}+\varepsilon^\eta[\nabla v]_{C^{0,\eta}({D}_{2r})}+\frac{\varepsilon}{r}\|\nabla v\|_{L^\infty({D}_{2r})}\bigg\}.
\end{equation}

Now, for any $q\in\mathbb{R}^d$, by the $C^{1,\eta}$ estimate for velocity and $C^{0,\eta}$ estimate for pressure of Stokes systems with constant coefficients, we have
\begin{equation}\label{ineq.v-full-C1eta}
\begin{aligned}
&\|\nabla v\|_{L^\infty({D}_{2r})}+s^{\eta}[\nabla v]_{C^{0,\eta}({D}_{2r})}+s^{\eta}[\tau]_{C^{0,\eta}({D}_{2r})}\\
&\le C \bigg\{ \frac{1}{s} \bigg( \average_{\widetilde{D}_{s} } |v-q|^2 \bigg)^{1/2} + s\bigg( \average_{\widetilde{D}_{s} } |F|^p \bigg)^{1/p}+  s^\eta [h]_{C^{0,\eta}(\widetilde{D}_{s})}+ \norm{g}_{L^\infty(\widetilde{D}_{s})}\\ 
&\quad\quad+  s^\eta [g]_{C^{0,\eta}(\widetilde{D}_{s})}+ \frac{1}{s}\norm{f-q}_{L^\infty(\widetilde{\Delta}_{s})} + \norm{\nabla_{\rtxt{tan}} f}_{L^\infty(\widetilde{\Delta}_{s})} + s^\eta [\nabla_{\rtxt{tan}} f]_{C^{0,\eta}(\widetilde{\Delta}_{s})} \bigg\}.
\end{aligned}
\end{equation}
By using (\ref{ineq.step2}) and (\ref{ineq.step5}) again, we see that
\begin{equation*}
\begin{aligned}
\frac{1}{s}\bigg( \average_{\widetilde{D}_{s} } |v-q|^2 \bigg)^{1/2}&\le \frac{1}{s}\bigg( \average_{\widetilde{D}_{s} } |u_\varepsilon-v|^2 \bigg)^{1/2}+\frac{1}{s}\bigg( \average_{\widetilde{D}_{s} } |u_\varepsilon-q|^2 \bigg)^{1/2}\\
&\le C\Big(\frac{\varepsilon}{s}\Big)^\gamma \Theta(R)+\frac{1}{s}\bigg( \average_{\widetilde{D}_{s} } |u_\varepsilon-q|^2 \bigg)^{1/2},
\end{aligned}
\end{equation*}
and by taking infimum over all $q\in \R^d$ for (\ref{ineq.v-full-C1eta}), we have
\begin{equation}\label{ineq.v-to-u}
\begin{aligned}
\|\nabla v\|_{L^\infty(D_{2r})}+s^{\eta}[\nabla v]_{C^{0,\eta}({D}_{2r})}+s^{\eta}[\tau]_{C^{0,\eta}({D}_{2r})}\le C\Big(\frac{\varepsilon}{s}\Big)^\gamma \Theta(R) +C\Theta(R).
\end{aligned}
\end{equation}
Then substituting (\ref{ineq.v-to-u}) into (\ref{ineq.K.rpe}), we arrive at
\begin{equation*}
K(r;p_\varepsilon)\le C\bigg\{\Big(\frac{s}{r}\Big)^{d/2}\Big(\frac{\varepsilon}{s}\Big)^\gamma+\Big(\frac{r}{s}\Big)^\eta+\Big(\frac{\varepsilon}{s}\Big)^\eta+\frac{\varepsilon}{r}\bigg\}\Theta(R),
\end{equation*}
which implies (\ref{ineq.p.step2}) if we choose 
\begin{equation*}
s=\left\{
\begin{aligned}
&r\bigg(\frac{r}{\varepsilon}\bigg)^{2\gamma/(d-2\gamma+2\eta)}, &\text{ if } r<R^{\frac{d-2\gamma+2\eta}{d+2\eta}}\varepsilon^{\frac{2\gamma}{d+2\eta}},\\
&R/4, &\text{ if } r\ge R^{\frac{d-2\gamma+2\eta}{d+2\eta}}\varepsilon^{\frac{2\gamma}{d+2\eta}}.
\end{aligned}
\right.
\end{equation*}
\\

Step 3: Now we are ready to prove the $L^\infty$ estimate for pressure, i.e.,
\begin{equation}\label{ineq.pe.infity}
\norm{p_\e - \average_{D_{2R}} p_\e}_{L^\infty(D_R)}
\le C\Theta(4R).
\end{equation}
Observe that this implies our desired oscillation estimate for $p_\e$, since
\begin{equation*}
\osc{D_R}{p_\e} \le 2\norm{p_\e - \average_{2R} p_\e}_{L^\infty(D_R)} \le C\Theta(4R).
\end{equation*}
Thus, it is sufficient to prove (\ref{ineq.pe.infity}). 

For any $\varepsilon<r<R/4$, there must exist some $k,\ell\in \mathbb{Z}$ such that $2^k r\le s_0\le 2^{k+1}r$ and $2^\ell r\le R \le 2^{\ell+1}r$. Using the triangle inequalities and (\ref{ineq.p.step2}), we obtain 
\begin{equation*}
\begin{aligned}
\left|\average_{D_r} p_\varepsilon-\average_{D_{R}} p_\varepsilon\right|&\le \sum_{i=0}^k K(2^ir;p_\varepsilon) +\sum_{i=k+1}^\ell K(2^ir;p_\varepsilon)\\
&\le C\Bigg\{\bigg(\frac{\varepsilon}{r}\bigg)^\sigma+\bigg(\frac{\varepsilon}{R}\bigg)^{\frac{2\gamma\eta}{d+2\eta}}+1+\bigg(\frac{\varepsilon}{R}\bigg)^{\frac{d+2\eta-2\gamma}{d+2\eta}}\Bigg\}\Theta(R)\\
&\le C\Theta(R).
\end{aligned}
\end{equation*}
Obviously, the same argument implies that for any $x\in D_R$,
\begin{equation}\label{ineq.pe2R}
\left|\average_{D_\varepsilon(x)} p_\varepsilon-\average_{D_{R}(x)} p_\varepsilon\right|\le C\Theta(2R).
\end{equation}
Moreover, by a blow-up argument, we have
\begin{equation}\label{ineq.p02e}
\left|p_\varepsilon(x)-\average_{D_\varepsilon(x)} p_\varepsilon\right|\le C\Theta(2\varepsilon)\le C\Theta(R),
\end{equation}
Therefore, by (\ref{ineq.pe2R}), (\ref{ineq.p02e}) and a similiar argument for the estimate of $K(R;p_\e)$, we conclude
\begin{equation*}
\begin{aligned}
\left|p_\varepsilon(x)-\average_{D_{2R}} p_\varepsilon\right|&\le \left|p_\varepsilon(x)-\average_{D_\varepsilon(x)} p_\varepsilon\right|+\left|\average_{D_\varepsilon(x)}p_\varepsilon-\average_{D_R(x)} p_\varepsilon\right|+\left|\average_{D_R(x)}p_\varepsilon-\average_{D_{2R}} p_\varepsilon\right|\\
&\le C\Theta(4R),
\end{aligned}
\end{equation*}
for any $x\in D_R$. This proves (\ref{ineq.pe.infity}) and hence the theorem.
\end{proof}

\begin{remark}
	Theorem \ref{thm.boundary-estimate} will be used mainly in Theorem \ref{thm.green-function.pointwise} for the estimates of Green's functions and in Lemma \ref{lem.Dirichlet-corrector.estimate} for the estimates of Dirichlet correctors. We mention that in order to establish the estimates of fundamental solutions in Theorem \ref{thm.FundSol}, only interior uniform estimates will be involved in an unbounded domain $\R^d\setminus \{y\}$. In this case, Theorem \ref{thm.boundary-estimate} is used simply by replacing $D_R$ with $B_R$.
\end{remark}
\section{Estimates of Green's Functions}
This section is devoted to establishing the existence of the Green's functions for the Stokes system (\ref{def.Dirichlet.Stokes}) and their corresponding pointwise estimates in a bounded $C^{1,\eta}$ domain. The estimates of Green's functions themselves will play essential roles in the future study of the Dirichlet problem of Stokes systems in periodic homogenization. 

To begin with, we mention the useful symmetry property for $G_\e$ (see \cite{ChoiLee15})
\begin{equation}\label{eq.symm}
G^*_\e(x,y) = G_\e(y,x)^T.
\end{equation}
However, $\Pi_\e(x,y)$ does not possess such symmetry property since the positions of $x$ and $y$ in $\Pi_\e(x,y)$ are not of equal level, even if $\cL_\e$ is self-adjoint. Roughly speaking, $\Pi_\e(x,y)$ behaves more like $\nabla_x G_\e(x,y)$, which can be seen from the following main theorem of this section.

\begin{theorem}\label{thm.green-function.pointwise}
	Let $\Omega$ be a bounded $C^{1,\eta}$ domain and $A$ satisfies (\ref{cond.ellipticity}), (\ref{cond.periodicity}) and (\ref{cond.holder}). Then the Green's functions $(G_\e,\Pi_\e)$ exist and are unique for Stokes system (\ref{def.Dirichlet.Stokes}). Moreover, we have
	\begin{enumerate}
		\item[(i)] Estimates for $G_\e$:
		\begin{equation}\label{est.ptG}
		|G_\e(x,y)| \le C\min \bigg\{ \frac{1}{|x-y|^{d-2}}, \frac{\delta(x)}{|x-y|^{d-1}}, \frac{\delta(y)}{|x-y|^{d-1}}, \frac{\delta(x)\delta(y)}{|x-y|^{d}} \bigg\}.
		\end{equation}
		
		\item[(ii)] Estimates for the first-order derivatives of $G_\e$:
		\begin{equation}\label{est.dxG}
		|\nabla_x G_\e(x,y)| \le C\min \bigg\{ \frac{1}{|x-y|^{d-1}}, \frac{\delta(y)}{|x-y|^{d}} \bigg\},
		\end{equation}
		and
		\begin{equation}\label{est.dyG}
		|\nabla_y G_\e(x,y)| \le C\min \bigg\{ \frac{1}{|x-y|^{d-1}}, \frac{\delta(x)}{|x-y|^{d}} \bigg\}.
		\end{equation}
		
		\item[(iii)] Estimate for the mixed derivatives of $G_\e$:
		\begin{equation}\label{est.dxdyG}
		|\nabla_x \nabla_y G_\e(x,y)| \le \frac{C}{|x-y|^{d}}.
		\end{equation}
		
		\item[(iv)] Estimates for $\Pi_\e$:
		\begin{equation}\label{est.Piy}
		|\Pi_\e(x,y) - \Pi_\e(z,y)| \le C\min \bigg\{ \frac{\delta(y)}{|x-y|^{d}} + \frac{\delta(y)}{|z-y|^d}, \frac{1}{|x-y|^{d-1}} + \frac{1}{|z-y|^{d-1}} \bigg\}.
		\end{equation}
		
		\item[(v)] Estimates for the derivatives of $\Pi_\e$:
		\begin{equation}\label{est.dxyPi}
		|\nabla_y \Pi_\e(x,y) - \nabla_y \Pi_\e (z,y)| \le C\bigg\{ \frac{1}{|x-y|^d} + \frac{1}{|z-y|^d} \bigg\}.
		\end{equation}
	\end{enumerate}
	The estimates {\em (i) - (v)} for  $(G_\e, \Pi_\e)$ are also valid for the adjoint Green's functions $(G_\e^*, \Pi_\e^*)$. The constant $C$ above depends only on $d,\eta, A$ and $\Omega$.
\end{theorem}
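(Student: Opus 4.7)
My plan is to establish existence and uniqueness of $(G_\e, \Pi_\e)$ via the construction of \cite{ChoiLee15}, whose hypotheses are now supplied by the uniform Lipschitz--oscillation estimate of Theorem \ref{thm.boundary-estimate}; uniqueness comes directly from property (iii) of Definition \ref{def.GreenFunc}. The five pointwise bounds will then be obtained by applying Theorem \ref{thm.boundary-estimate} to $(G_\e(\cdot, y), \Pi_\e(\cdot, y))$ on domain-intersected balls $D_r$ chosen to avoid the singular source at $y$.

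For the velocity bounds (i)--(iii) I will follow the template of \cite{KenigLinShen14} for elliptic systems. Setting $r \sim |x-y|/4$, the Lipschitz part of Theorem \ref{thm.boundary-estimate} controls $\norm{G_\e(\cdot, y)}_{L^\infty(D_r(x))}$ by the $L^2$ norm of $G_\e(\cdot, y)$ on $D_{4r}(x)$, and a standard $L^2$-to-$L^\infty$ duality argument against the energy estimate (Theorem \ref{thm.energy-estimate}) yields $|G_\e(x, y)| \le C/|x-y|^{d-2}$. The boundary factor $\delta(x)$ in (i) then arises from $G_\e(\cdot, y)|_{\partial\Omega} = 0$ together with the Lipschitz estimate, and the factor $\delta(y)$ follows by running the same argument on the adjoint system and invoking the symmetry (\ref{eq.symm}); taking the minimum of the four bounds gives (i). The first-order estimates (ii) and (iii) come directly from the Lipschitz part of Theorem \ref{thm.boundary-estimate} applied to $G_\e(\cdot, y)$ (and, for the $\nabla_y$ part, to the adjoint via (\ref{eq.symm})); the mixed bound in (iii) is obtained by applying the $x$-Lipschitz estimate to $\nabla_y G_\e(\cdot, y)$, with the latter interpreted via difference quotients in $y$.

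The main novelty is in (iv) and (v), which crucially use the uniform \emph{oscillation} estimate for the pressure in Theorem \ref{thm.boundary-estimate}. For (iv), applying that estimate to $\Pi_\e(\cdot, y)$ on $D_r(x)$ with $r \sim |x-y|/4$ gives
$$
\osc{D_{r/4}(x)}{\Pi_\e(\cdot, y)} \le \frac{C}{r} \bigg( \average_{D_r(x)} |G_\e(\cdot, y)|^2 \bigg)^{1/2} \le \frac{C}{|x-y|^{d-1}},
$$
by (i); substituting the sharper bound $|G_\e(\cdot, y)| \le C\delta(y)/|x-y|^{d-1}$ yields the improved $C\delta(y)/|x-y|^d$. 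To pass from a single-ball oscillation to the difference $|\Pi_\e(x, y) - \Pi_\e(z, y)|$, I will join $x$ and $z$ by a dyadic chain of balls avoiding $B(y, c\min\{|x-y|, |z-y|\})$, so that the telescoped oscillations are geometrically dominated by the two endpoint contributions. For (v) I will interpret $\nabla_y \Pi_\e$ through finite differences in $y$, apply the same oscillation machinery with $\nabla_y G_\e$ in place of $G_\e$, invoke (iii) to bound the corresponding $L^2$ norm, and obtain $\osc{D_{r/4}(x)}{\nabla_y \Pi_\e(\cdot, y)} \le C/|x-y|^d$ before chaining.

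The principal obstacle is that the pressure admits no pointwise bound, only oscillation, so every claim in (iv)--(v) must be in difference form, and the chain-of-balls argument must be set up so that summed oscillations telescope to exactly two endpoint contributions rather than accumulate an integral or a logarithmic factor; this is precisely where the new idea of using uniform oscillation (rather than pointwise $L^\infty$) for $\Pi_\e$ pays off. The second delicate point is making rigorous the differentiation in $y$ needed for (v), which I propose to handle via difference quotients in $y$ using the already-established bound (iii) on $\nabla_y G_\e$, rather than by directly analyzing a Stokes system with distributional forcing $\partial_k \delta_y$.
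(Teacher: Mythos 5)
Your plan follows essentially the same route as the paper's proof: existence, uniqueness and the interior-type bound $|G_\e(x,y)|\le C|x-y|^{2-d}$ from \cite{ChoiLee15} under the hypotheses supplied by Theorem \ref{thm.boundary-estimate}, the remaining velocity bounds in (i)--(iii) by iterating the uniform Lipschitz estimate with the vanishing boundary values, the symmetry (\ref{eq.symm}) and difference quotients in $y$, and the pressure estimates (iv)--(v) by the oscillation part of Theorem \ref{thm.boundary-estimate} combined with a dyadic covering/chaining argument away from the singularity whose summed oscillations converge geometrically to the endpoint contributions. One cosmetic slip: in (v) the $L^2$ average entering the oscillation estimate is that of the velocity $\nabla_y G_\e(\cdot,y)$ of the differentiated system, so the bound you need is the first part of (\ref{est.dyG}) (part (ii)) rather than (iii); with that relabeling your argument coincides with the paper's.
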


\begin{proof}
	The existence and uniqueness of the Green's functions as well as a global pointwise estimate for $G_\e$ (the first part of (\ref{est.ptG})) were proved in \cite{ChoiLee15} under conditions (A0) - (A2) therein, which are obviously guaranteed by the uniform Lipschitz estimate of the velocity in Theorem \ref{thm.boundary-estimate}. Then, all the remaining estimates in (i) and (ii) follow from a standard argument outlined below. 
	
	First, by the Lipschitz estimate in Theorem \ref{thm.boundary-estimate} for $u_\e$, the first part of (\ref{est.ptG}) implies the first parts of (\ref{est.dxG}) and (\ref{est.dyG}) (the symmetry property (\ref{eq.symm}) is used for (\ref{est.dyG})). These further imply the second and third parts of (\ref{est.ptG}) by the fact that $G_\e(x,y)$ vanishes on the boundary and the fundamental theorem of calculus. Now by employing the Lipschitz estimate again, the second and third parts of (\ref{est.ptG}) lead to the second parts of (\ref{est.dyG}) and (\ref{est.dxG}), respectively. Finally, the last part of (\ref{est.ptG}) follows from either the second part of (\ref{est.dxG}) or the second part of (\ref{est.dyG}), as well as the fundamental theorem of calculus.
	 
	Now we turn to the proof of (iv), while the proof of (iii) will be given later together with (v). Let $y\in \Omega$ be fixed. Without loss of generality, we assume $|x-y| \le |z-y|$. Then it is sufficient to show
	\begin{equation}\label{est.Piysp}
	|\Pi_\e(x,y) - \Pi_\e(z,y)| \le C\min \bigg\{ \frac{\delta(y)}{|x-y|^{d}}, \frac{1}{|x-y|^{d-1}} \bigg\}.
	\end{equation}
	Note that Theorem \ref{thm.boundary-estimate} shows that the oscillation of pressure in $D_r$ away from $y$ is bounded by the average of velocity over a larger $D_{4r}$. So if we can choose a family of balls with suitable sizes, covering a connecting path from $x$ to $z$, then we are able to control the maximum oscillation of $\Pi_\e$ between $x$ and $z$.
	
	Let $r = |x-y|$. Consider sets $E_k = D_{2^k r}(y)\setminus D_{2^{k-1}r}(y)$. Observe that for each $k\ge 1$, $E_k$ can be covered by at most $N$ balls $\{B_{kj}: j =1,2,\cdots, N\}$ satisfying $\text{diam}(B_{kj}) \ge c2^k r$ and $\text{dist}(y,8B_{kj}) \ge C2^k r$, where $N$ depends only on $d$. As a result,
	\begin{equation*}
	\Omega\setminus D_r(y) = \bigcup_{k\ge 1} E_k \subset \bigcup_{k,j} B_{kj}.
	\end{equation*}
	Now for each $B_{kj}$, by the oscillation estimate for $p_\e$ in Theorem \ref{thm.boundary-estimate} as well as (\ref{est.ptG}), we have
	\begin{align*}
	\osc{B_{kj} \cap \Omega}{\Pi_\e(\cdot,y)} &\le \frac{C}{\text{diam}(B_{kj})}\bigg( \average_{2B_{kj} \cap \Omega} |G_\e(w,y)|^2 dw\bigg)^{1/2} \\
	&\le C\min\bigg\{\frac{\delta(y)}{(2^kr)^{d}},\, \frac{1}{(2^k r)^{d-1}} \bigg\}.
	\end{align*}
	Summing over all $k$ and $j$, we have
	\begin{align}\label{est.PiOmg-Dr}
	\begin{aligned}
	\osc{\Omega\setminus D_r(y)}{\Pi_\e(\cdot,y)} \le \sum_{k,j} \osc{B_{kj}\cap \Omega}{\Pi_\e(\cdot,y)} & \le \sum_{k\ge 1} \sum_{j=1}^N C\min\bigg\{\frac{\delta(y)}{(2^kr)^{d}},\, \frac{1}{(2^k r)^{d-1}} \bigg\}
	\\
	 & \le CN\min\bigg\{\frac{\delta(y)}{r^{d}},\, \frac{1}{r^{d-1}} \bigg\},
	\end{aligned}
	\end{align}
	which implies (\ref{est.Piysp}) and hence (\ref{est.Piy}).

	Next, we deal with (iii) and (v). Indeed, for any fixed $1\le \ell \le d$, we notice that $(\frac{\partial}{\partial y_\ell}G_\e(\cdot,y), \frac{\partial}{\partial y_\ell}\Pi_\e(\cdot,y))$ is a solution of
	\begin{equation}\label{eq.dyGreen}
	\left\{
	\begin{aligned}
	\cL_\e\bigg(\frac{\partial}{\partial y_\ell}G_\e(\cdot,y) \bigg)+\nabla \frac{\partial}{\partial y_\ell}\Pi_\e(\cdot,y) &= 0 \qquad &\text{ in }& \Omega\setminus\{y\},\\
	\text{div} \bigg(\frac{\partial}{\partial y_\ell} G_\e(\cdot,y) \bigg) &= 0 \qquad & \text{ in }& \Omega,\\
	\frac{\partial}{\partial y_\ell} G_\e(\cdot,y)& = 0 \qquad & \text{ on }&\partial\Omega.
	\end{aligned}
	\right.	
	\end{equation}
	Note that the boundary condition above is due to the second part of (\ref{est.dyG}). (One can justify (\ref{eq.dyGreen}) by first considering the difference $G_\e(\cdot,y+h) - G_\e(\cdot,y)$ and applying a standard regularity argument in PDEs.)
	
	Let $y\in \Omega$ be fixed. Then, by the uniform Lipschitz estimate for the velocity in Theorem \ref{thm.boundary-estimate} and (\ref{est.dyG}), for any $\ell =1,2,\cdots, d$, we have
	\begin{equation*}
	\bigg|\nabla_x\frac{\partial}{\partial y_\ell}G_\e(x,y) \bigg| \le C \bigg( \average_{D_{r/2}(x)} \bigg|\frac{\partial}{\partial y_\ell}G_\e(z,y)\bigg|^2 dz \bigg)^{1/2} \le \frac{C}{|x-y|^d},
	\end{equation*}
	which proves (iii) as desired.
	
	Finally, to see (v), we use the first parts of (\ref{est.dxG}) and (\ref{est.dyG}), and apply  the same argument for (iv) to the system (\ref{eq.dyGreen}) to obtain
	\begin{equation}\label{est.dyPi}
	\osc{\Omega\setminus D_r(y)}{\nabla_y \Pi_\e(\cdot,y)} \le \frac{C}{r^d}, \qquad \text{for any } r>0.
	\end{equation}
	This implies the desired estimate (\ref{est.dxyPi}) and the proof is now  complete.
\end{proof}

\begin{remark}\label{rmk.Pi.Bdry}
	We notice that estimates for $\Pi_\e$ in Theorem \ref{thm.green-function.pointwise} are in the form of difference. By taking advantage of the assumption in the Definition \ref{def.GreenFunc} that $\int_{\Omega} \Pi_\e(z,y) dy = 0$, it can be shown by integrating (\ref{est.Piy}) with respect to $z$ over $\Omega$ that
	\begin{equation}\label{est.Pixy}
	|\Pi_\e(x,y)| \le C\min \bigg\{ \frac{\delta(y) \ln(R/\delta(y)+2)}{|x-y|^{d}}, \frac{1}{|x-y|^{d-1}} \bigg\}.
	\end{equation}
	This is a global pointwise estimate for $\Pi_\e$, though the extra logarithm makes it less useful. Nevertheless, a basic property we can see from (\ref{est.Pixy}) is that $\Pi_\e(x,y) = 0$ for any $y\in \partial\Omega$.
\end{remark}

With the existence and corresponding estimates of the Green's functions, we introduce an integral representation, derived by the integration by parts, for the velocity component of the weak solution of (\ref{def.Dirichlet.Stokes}):

\begin{align}\label{eq.intRepresent}
\begin{aligned}
u^\alpha_\e(x) &= \int_{\Omega} G_\e^{\alpha\beta}(x,y) F^\beta(y) dy - \int_{\Omega} \frac{\partial}{\partial y_j}G_\e^{\alpha\beta}(x,y) h^\beta_j(y) dy - \int_{\Omega} \Pi_\e^{*\alpha}(y,x) g(y) dy \\
& \qquad + \int_{\partial\Omega} \bigg[- n_i a^{\beta\gamma}_{ji}(y/\e)\frac{\partial}{\partial y_j} G_\e^{\alpha\beta}(x,y) + \Pi_\e^{*\alpha}(y,x) n^\gamma \bigg]f^\gamma(y) d\sigma(y).
\end{aligned}
\end{align}
For simplicity, we define
\begin{equation*}
\Upsilon_\e^{\alpha\gamma}(x,y) = - n_i a^{\beta\gamma}_{ji}(y/\e)\frac{\partial}{\partial y_j} G_\e^{\alpha\beta}(x,y),
\end{equation*}
and thus can briefly write (\ref{eq.intRepresent}) as
\begin{align}\label{def.intRep}
\begin{aligned}
u_\e(x) &= \int_{\Omega} G_\e(x,y) F(y) dy - \int_{\Omega} \nabla_y G_\e(x,y) h(y)dy - \int_{\Omega} \Pi_\e^*(y,x) g(y) dy \\
& \qquad + \int_{\partial\Omega} \big[\Upsilon_\e(x,y) + \Pi_\e^*(y,x)\otimes n \big]f(y) d\sigma(y).
\end{aligned}
\end{align}

Generally, there is no simple integral representation formula similar to (\ref{def.intRep}) for the pressure component $p_\e$. Nevertheless, in a particular case, i.e., $h = 0,\, g = 0$ and $f= 0$, we actually have the integral representation for both $u_\e$ and $p_\e$, namely,
\begin{equation}\label{def.intRep.onlyF}
\left\{
\begin{aligned}
u_\e(x) & = \int_{\Omega} G_\e(x,y)F(y) dy, \\
p_\e(x) & = \int_{\Omega} \Pi_\e(x,y) \cdot F(y) dy.
\end{aligned}
\right.
\end{equation}

To end this section and for future applications, we shall mention the fundamental solutions of Stokes system (\ref{def.Dirichlet.Stokes}) corresponding to a special case $\Omega = \R^d$. As usual, we call the pair $(\Gamma_\e, Q_\e)$ 
the fundamental solutions of the Stokes system (\ref{def.Dirichlet.Stokes}), if it satisfies, in the sense of distribution, 
\begin{equation}\label{def.FundSol}
\left\{
\begin{aligned}
\cL_\e(\Gamma_\e(\cdot,y))+\nabla Q_\e(\cdot,y) &= \delta_y I \qquad &\text{ in }& \R^d,\\
\text{div}(\Gamma_\e(\cdot,y)) &= 0 \qquad & \text{ in }& \R^d,\\
\lim_{|x-y|\to \infty} \Gamma_\e(x,y)& = 0.
\end{aligned}
\right.	
\end{equation}
The precise definition for the fundamental solutions for Stokes system with variable coefficients can be found in \cite{ChoiYang17}. Again, if $(\Gamma_\e^* ,Q_\e^*)$ is the the adjoint fundamental solution for the adjoint Stokes system in $\R^d$, then
\begin{equation*}
\Gamma_\e^*(x,y) = \Gamma_\e(y,x)^{T}.
\end{equation*}
Then we have the following estimates for the fundamental solution $(\Gamma_\e, Q_\e)$.
\begin{theorem}\label{thm.FundSol}
	Assume that $A$ satisfies (\ref{cond.ellipticity}), (\ref{cond.periodicity}) and (\ref{cond.holder}). Then the fundamental solutions $(\Gamma_\e ,Q_\e)$ for Stokes system exist and are unique (up to a constant for $Q_\e$). Moreover, there exists some constant $\overline{Q} \in \R^d$ such that for any $x,y\in \R^d, x\neq y$, we have
	\begin{equation}\label{est.Gammae}
	|\Gamma_\e(x,y)| \le \frac{C}{|x-y|^{d-2}},
	\end{equation}
	\begin{equation}\label{est.dGammae}
	|\nabla_x \Gamma_\e(x,y)| + |\nabla_y \Gamma_\e(x,y)| + |Q_\e(x,y) - \overline{Q}| \le \frac{C}{|x-y|^{d-1}},
	\end{equation}
	\begin{equation}\label{est.ddGammae}
	|\nabla_x \nabla_y \Gamma_\e(x,y)| + |\nabla_y Q_\e(x,y)| \le \frac{C}{|x-y|^{d}},
	\end{equation}
	All the constants $C$ above depend only on $d$ and $A$.
\end{theorem}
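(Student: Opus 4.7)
The plan is to realize $(\Gamma_\e, Q_\e)$ as a locally uniform limit of the Green's functions on expanding balls. For each $R>0$ let $(G_\e^R(\cdot,y), \Pi_\e^R(\cdot,y))$ be the Green's functions on the ball $B_R(y)$, renormalized so that $\Pi_\e^R(x_0,y)=0$ at an auxiliary fixed point $x_0 \neq y$; the mean-zero normalization of Definition \ref{def.GreenFunc} does not survive the limit and is replaced by this pointwise one, which is compatible with the difference form of (\ref{est.Piy}). Because $B_R(y)$ is a $C^{2,\eta}$ domain with $R$-uniform geometric character and $\delta(x) \ge R/2$ for $|x-y| \le R/2$, Theorem \ref{thm.green-function.pointwise} yields $R$-uniform bounds on the compact subset $\{|x-y| \le R/2\}$: $|G_\e^R| \le C|x-y|^{2-d}$, $|\nabla_x G_\e^R| + |\nabla_y G_\e^R| \le C|x-y|^{1-d}$, $|\nabla_x \nabla_y G_\e^R| \le C|x-y|^{-d}$, together with the two-point oscillation bounds (\ref{est.Piy}) and (\ref{est.dxyPi}).

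Combining these bounds with interior higher-regularity estimates (the remark after Theorem \ref{thm.boundary-estimate} ensures its interior part applies in $\R^d \setminus \{y\}$), I would extract by a diagonal Arzel\`a--Ascoli argument a subsequence $R_k \to \infty$ along which $(G_\e^{R_k}(\cdot,y), \Pi_\e^{R_k}(\cdot,y))$ converges locally uniformly on $\R^d \setminus \{y\}$, together with its first derivatives, to some pair $(\Gamma_\e(\cdot,y), Q_\e(\cdot,y))$. Testing the Green's function equation (\ref{def.Green}) against any $\varphi \in C_c^\infty(\R^d)$, which is eventually supported inside $B_{R_k}(y)$, and passing to the limit yields (\ref{def.FundSol}) in the distributional sense. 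Estimate (\ref{est.Gammae}), the $\Gamma_\e$ terms in (\ref{est.dGammae}), and the $\Gamma_\e$ term in (\ref{est.ddGammae}) are inherited directly by passage to the limit.

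The main obstacle is producing the constant $\overline{Q}$ and proving $|Q_\e(x,y) - \overline{Q}| \le C|x-y|^{-(d-1)}$, since in $\R^d$ there is no boundary and no natural mean to pin down the pressure. The key input is the covering-and-oscillation argument behind (\ref{est.PiOmg-Dr}) applied inside $B_{R_k}(y)$: one has
\begin{equation*}
\osc{B_{R_k}(y) \setminus B_r(y)}{\Pi_\e^{R_k}(\cdot,y)} \le \frac{C}{r^{d-1}},
\end{equation*}
with $C$ independent of $R_k$. Passing to the limit gives $\osc{\R^d \setminus B_r(y)}{Q_\e(\cdot,y)} \le Cr^{-(d-1)}$. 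Consequently, for any sequence $(x_n)$ with $|x_n - y| \to \infty$, the values $Q_\e(x_n,y)$ form a Cauchy sequence with a common limit $\overline{Q}$ independent of the sequence; sending one argument to infinity in the two-point estimate then yields the desired bound on $|Q_\e(x,y) - \overline{Q}|$. The analogous argument applied to $\nabla_y \Pi_\e^{R_k}$ using (\ref{est.dyPi}) delivers $|\nabla_y Q_\e(x,y)| \le C|x-y|^{-d}$.

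Uniqueness reduces to a Liouville-type statement: the difference of two candidate pairs satisfies the homogeneous Stokes system on all of $\R^d$ with velocity decaying to $0$ at infinity; the interior uniform Lipschitz estimate (Theorem \ref{thm.boundary-estimate} applied on $B_R$ with $R \to \infty$) forces the velocity gradient to vanish identically, so the two velocities coincide and the pressure difference is a constant, giving uniqueness of $\Gamma_\e$ and of $Q_\e$ modulo a constant.
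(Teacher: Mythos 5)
Your construction of $(\Gamma_\e,Q_\e)$ as a limit of Green's functions on expanding balls (with a pointwise renormalization of the pressure) is a reasonable alternative to the paper, which instead invokes the existence/uniqueness results of Choi--Yang and then runs the interior uniform estimates of Theorem \ref{thm.boundary-estimate}; the velocity estimates, the limit passage, and the Liouville-type uniqueness argument are all fine in outline. However, there is a genuine gap at exactly the point the paper flags as the delicate one: the constant $\overline{Q}$. Your oscillation argument produces, for each fixed $y$, a limit of $Q_\e(x,y)$ as $|x-y|\to\infty$; calling it ``independent of the sequence'' only rules out dependence on the sequence $(x_n)$, not on $y$. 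What you obtain is a function $\overline{Q}(y)$ and the bound $|Q_\e(x,y)-\overline{Q}(y)|\le C|x-y|^{1-d}$, whereas the theorem asserts a single constant $\overline{Q}$ valid for all $y$. Nothing in your proposal addresses this. The paper closes it by using the representation formula $u_\e(x)=-\int_{\R^d} Q_\e(x,y)g(y)\,dy$ for the adjoint problem with divergence data $g\in C_0^\infty(\R^d)$, $\int g=0$, together with the decay of $u_\e$ (it lies in $Y^1_2$ and is uniformly Lipschitz), to conclude $\int \overline{Q}(y)g(y)\,dy=0$ for all such $g$, hence $\overline{Q}$ is constant.

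The same omission undermines your last claimed estimate. The analogue of (\ref{est.dyPi}) in the whole space only gives the difference bound $|\nabla_y Q_\e(x,y)-\nabla_y Q_\e(z,y)|\le C\{|x-y|^{-d}+|z-y|^{-d}\}$; to promote this to the pointwise bound $|\nabla_y Q_\e(x,y)|\le C|x-y|^{-d}$ you must show that the anchor value, i.e.\ the limit of $\nabla_y Q_\e(z,y)$ as $|z|\to\infty$, vanishes. Integrating $\nabla_y Q_\e(x,\cdot)$ over a short segment in $y$ and letting $|x|\to\infty$ shows that this limit is precisely the (weak) gradient of $y\mapsto\overline{Q}(y)$, so the pointwise bound for $\nabla_y Q_\e$ is essentially equivalent to the $y$-independence of $\overline{Q}$ that you have not proved. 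So you need the paper's duality/decay argument (or some substitute, e.g.\ first proving $\overline{Q}$ is constant and then deducing the vanishing of the limit of $\nabla_y Q_\e$ at infinity); as written, both the statement about a single constant $\overline{Q}$ and the bound on $\nabla_y Q_\e$ are unproved.
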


The proof of Theorem \ref{thm.FundSol} follows from the primary results in \cite{ChoiYang17} and the same argument as in Theorem \ref{thm.green-function.pointwise}. The only point we would like to emphasize is the existence of the constant $\overline{Q}$. To see this, for a given $y\in \R^d$, the estimate (\ref{est.PiOmg-Dr}) gives 
\begin{equation*}
\osc{\R^d\setminus D_r(y)}{Q_\e(\cdot,y)} \le C r^{1-d},
\end{equation*}
for any $r>0$. This implies that $Q_\e(\cdot,y)$ has a limit $\overline{Q}(y)$ such that
\begin{equation}\label{est.QeQbar}
|Q_\e(x,y) - \overline{Q}(y)| \le \frac{C}{|x-y|^{d-1}}.
\end{equation}
Then it remains to show $\overline{Q}(y)$ is independent of $y$. Recall that in \cite[Definition 2.4]{ChoiYang17}, the weak solution of
\begin{equation*}
\left\{
\begin{aligned}
\cL_\e^*(u_\e)+\nabla p_\e &= 0 \qquad &\text{ in }& \R^d,\\
\text{div}(u_\e) &= g \qquad & \text{ in }& \R^d,\\
\end{aligned}
\right.
\end{equation*}
with $g\in C_0^\infty(\R^d)$ and $\int_{\R^d} g = 0$ is given by
\begin{equation}\label{eq.ueGamaQ}
u_\e(x) = - \int_{\R^d} Q_\e(x,y) g(y)dy.
\end{equation}
Now since $u_\e$ belongs to $Y_2^1 = \{f: |f|\in L^{2d/(d-2)}(\R^d) \text{ and } |\nabla f|\in L^2(\R^d)\}$ (see \cite{ChoiYang17}) and is uniformly Lipschitz, we have $\lim_{|x|\to \infty}|u_\e(x)| = 0$. Thus, it follows from (\ref{est.QeQbar}) and (\ref{eq.ueGamaQ}) that
\begin{equation*}
\lim_{|x|\to \infty} \int_{\R^d} Q_\e(x,y) g(y) dy = \int_{\R^d} \overline{Q}(y) g(y) dy = 0,
\end{equation*}
for any $g\in C_0^\infty(\R^d)$ with $\int_{\R^d} g = 0$. This implies that $\overline{Q}$ must be a constant.

\section{Asymptotic Expansion of $G_\e$}

\subsection{Local $L^\infty$ Estimate}
First, we will use (\ref{def.intRep}) to prove an $L^p$ estimate for the velocity $u_\e$. For a bounded $C^{1,\eta}$ domain $\Omega$, we define the usual non-tangential cone for a point $\hat{x} \in \partial\Omega$ by
\begin{equation*}
	\mathcal{C}(\hat{x}) = \{x\in \Omega: |x-\hat{x}| < 2 \delta(x) \}.
\end{equation*}
For a function $F$ defined in $\Omega$, we denote by $(F)^*$ the non-tangential maximal function on $\partial\Omega$, i.e., $(F)^*(\hat{x}) = \sup_{x\in \mathcal{C}(\hat{x})} |F(x)|$. For $f\in L^1(\partial\Omega)$, denote $\mathcal{M}_{\partial\Omega}(f)$ by the Hardy-Littlewood maximal function, i.e.,
\begin{equation*}
	\mathcal{M}_{\partial\Omega}(f)(\hat{x}) = \sup_{r>0} \average_{\partial\Omega \cap B(\hat{x},r)} |f(y)|d\sigma(y).
\end{equation*}

\begin{theorem}\label{thm.Lpqr}
	Let $\Omega$ be a bounded $C^{1,\eta}$ domain and $A$ satisfy (\ref{cond.ellipticity}), (\ref{cond.periodicity}) and (\ref{cond.holder}). Let $(u_\e,p_\e)$ be the weak solution of (\ref{def.Dirichlet.Stokes}) with data in appropriate spaces. Then for any $1 \le p\le \infty$,
	\begin{equation}\label{est.ueLp}
		\norm{u_\e}_{L^p(\Omega)} \le C\big(\norm{f}_{L^q(\partial\Omega)} + \norm{g}_{L^r(\Omega)} + \norm{h}_{L^r(\Omega)} + \norm{F}_{L^s(\Omega)} \big),
	\end{equation}
	where $q,r$ and $s$ satisfy
	\begin{equation}\label{cond.pq}
		\left\{
		\begin{aligned}
			&q \ge \frac{(d-1)p}{d}, \quad \text{and} \quad 1< q\le \infty, \\
			&\frac{1}{r} - \frac{1}{p} < \frac{1}{d}, \quad \text{and} \quad 1\le r\le \infty,\\
			& \frac{1}{s} - \frac{1}{p} < \frac{2}{d},  \quad \text{and} \quad 1\le s \le \infty.
		\end{aligned}
		\right.
	\end{equation}
\end{theorem}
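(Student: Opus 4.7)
The starting point is the integral representation (\ref{def.intRep}), which displays $u_\e(x)$ as a sum of three volume integrals against $F$, $h$, $g$ and one boundary integral against $f$. My plan is to bound each of the four pieces separately using the pointwise estimates on $(G_\e, \Pi_\e)$ and their adjoints from Theorem \ref{thm.green-function.pointwise} and Remark \ref{rmk.Pi.Bdry}, then combine them.

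For the three volume integrals, the analysis reduces to classical fractional-integration bounds on the bounded domain $\Omega$. From (\ref{est.ptG}), $|G_\e(x,y)| \le C|x-y|^{2-d}$, so the map $F \mapsto \int_\Omega G_\e(\cdot,y) F(y)\, dy$ is dominated pointwise by a Riesz potential of order $2$; the Hardy-Littlewood-Sobolev inequality together with the $L^p$-embeddings available on bounded $\Omega$ yields $L^s(\Omega) \to L^p(\Omega)$ throughout the open range $1/s-1/p<2/d$, where the $p=\infty$ endpoint ($s>d/2$) comes from H\"older against the locally integrable kernel. The kernels $|\nabla_y G_\e(x,y)|$ and $|\Pi_\e^*(y,x)|$ are both controlled by $C|x-y|^{1-d}$ thanks to (\ref{est.dyG}) and (\ref{est.Pixy}), so the $h$- and $g$-integrals are Riesz potentials of order $1$ and the same argument gives $L^r\to L^p$ whenever $1/r-1/p<1/d$.

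The real obstacle is the boundary integral
\begin{equation*}
Tf(x):=\int_{\partial\Omega}\big[\Upsilon_\e(x,y)+\Pi_\e^*(y,x)\otimes n\big]f(y)\,d\sigma(y),
\end{equation*}
whose kernel satisfies the Poisson-type bound $|K_\e(x,y)|\le C\delta(x)/|x-y|^d$ by the second parts of (\ref{est.dyG}) and (\ref{est.Pixy}). I would establish two endpoints for $T$. A dyadic-annular decomposition around the nearest boundary point $\hat{x}$ to $x$ gives the pointwise inequality $|Tf(x)|\le C\mathcal{M}_{\partial\Omega}(f)(\hat{x})$, which yields the strong $L^\infty(\partial\Omega)\to L^\infty(\Omega)$ endpoint. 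Testing $T$ on a Dirac mass $f=\delta_{\hat{y}_0}$ and computing the distribution function of $\delta(x)/|x-\hat{y}_0|^d$ in tubular coordinates near $\hat{y}_0$ produces the weak-type bound $|\{x\in\Omega:|Tf(x)|>\lambda\}|\le C(\|f\|_{L^1(\partial\Omega)}/\lambda)^{d/(d-1)}$. Marcinkiewicz interpolation between these two endpoints then delivers the strong-type estimate $L^q(\partial\Omega)\to L^{qd/(d-1)}(\Omega)$ for every $q\in(1,\infty)$, and the embedding $L^{qd/(d-1)}(\Omega)\hookrightarrow L^p(\Omega)$, valid whenever $p\le qd/(d-1)$ (equivalent to $q\ge(d-1)p/d$), handles the remaining range.

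Summing the four contributions produces (\ref{est.ueLp}). I expect the delicate step to be the weak-type endpoint for $T$: the exponent $d/(d-1)$ is sharp and reflects the one-dimensional gain when passing from the $(d-1)$-dimensional boundary into the $d$-dimensional interior, so the distribution-function computation must be carried out carefully in curvilinear coordinates adapted to $\partial\Omega$. Once that endpoint is established, the remaining interpolation and embedding arguments are entirely standard.
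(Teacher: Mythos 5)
Your treatment of the three volume integrals and of the $\Upsilon_\e$-part of the boundary integral is sound (for the latter, your weak-type $(1,\tfrac{d}{d-1})$ plus Marcinkiewicz route is a legitimate substitute for the paper's appeal to a non-tangential maximal function lemma, and your use of the global bound (\ref{est.Pixy}) for the $g$-term works because there the integration is over the $d$-dimensional set $\Omega$). The genuine gap is in the boundary pressure term $\int_{\partial\Omega}\big[\Pi_\e^*(y,x)\otimes n\big]f(y)\,d\sigma(y)$: you claim its kernel obeys the Poisson-type bound $C\delta(x)/|x-y|^d$ ``by the second part of (\ref{est.Pixy})'', but (\ref{est.Pixy}) gives only
\begin{equation*}
|\Pi_\e^*(y,x)| \le C\min\bigg\{\frac{\delta(x)\,\ln(R_0/\delta(x)+2)}{|x-y|^{d}},\ \frac{1}{|x-y|^{d-1}}\bigg\},
\end{equation*}
i.e.\ either a logarithmic loss in $\delta(x)$ or a non-Poisson kernel whose surface integral $\int_{\partial\Omega}|x-y|^{1-d}d\sigma(y)$ itself diverges like $\ln(1/\delta(x))$ as $x$ approaches the boundary. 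With either bound the pointwise domination by $\mathcal{M}_{\partial\Omega}(f)(\hat x)$ fails uniformly in the cone, so your $L^\infty(\partial\Omega)\to L^\infty(\Omega)$ endpoint — exactly the case $p=q=\infty$ of the theorem, which is later used as the Miranda--Agmon maximum principle — does not follow, and the interpolation at the other exponents is contaminated as well. This is precisely why the paper does not use (\ref{est.Pixy}) here (Remark \ref{rmk.Pi.Bdry} warns that the logarithm makes it ``less useful'').

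The repair is the paper's device: exploit the normalization $\int_{\Omega}\Pi_\e^*(z,x)\,dz=0$ to replace the kernel by the average of differences $\average_{\Omega}\big[\Pi_\e^*(y,x)-\Pi_\e^*(z,x)\big]dz$ and invoke the oscillation estimate (\ref{est.Piy}) for the adjoint, splitting $\partial\Omega\times\Omega$ according to whether $|x-y|>|x-z|$ or not. On the set where $|x-y|>|x-z|$ one uses the bound $C|x-z|^{1-d}$, whose $z$-integral over $\Omega$ is finite, contributing $C\|f\|_{L^1(\partial\Omega)}$; on the complementary set one has the genuine log-free Poisson bound $C\delta(x)/|x-y|^{d}$, which is then controlled by $\mathcal{M}_{\partial\Omega}(f)(\hat x)$. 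With that substitution your scheme (maximal function at $q=\infty$, weak endpoint and interpolation for $1<q<\infty$, then H\"older on the bounded domain to pass from $qd/(d-1)$ to $p$) closes correctly.
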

\begin{proof}
	By the uniqueness, the component $u_\e$ of the weak solution can be given by the integral representation formula (\ref{def.intRep}). In view of the assumption $\int_{\Omega}\Pi_\e^*(z,x)dz = 0$, we can further write
	\begin{align*}
		u_\e(x) & = \int_{\Omega} G_\e(x,y) F(y) dy + \int_{\Omega} \nabla_y G_\e(x,y) h(y) dy  \\
		& \qquad- \int_{\Omega} \bigg[ \Pi_\e^*(y,x) - \average_\Omega \Pi_\e^*(z,x) dz \bigg] g(y) dy + \int_{\partial\Omega} \Upsilon_\e(x,y)f(y)d\sigma(y) \\
		& \qquad + \int_{\partial\Omega} \bigg\{ \bigg[ \Pi_\e^*(y,x) - \average_\Omega \Pi_\e^*(z,x) dz\bigg] \otimes n \bigg\} f(y) d\sigma(y) \\
		& = u_\e^{(1)}(x) + u_\e^{(2)}(x) + u_\e^{(3)}(x) + u_\e^{(4)}(x) + u_\e^{(5)}(x),
	\end{align*}
	where $u_\e^{(k)}, k=1,2,3,4,5$, denote the five integrals in proper order in the last identity.
	
	We first estimate $u_\e^{(1)}(x)$. By (\ref{est.ptG}), one has
	\begin{equation*}
		|u_\e^{(1)}(x)| \le C\int_{\Omega} \frac{|F(y)|}{|x-y|^{d-2}} dy.
	\end{equation*}
	It follows from \cite[Lemma 7.12]{GilbargTrudinger83} that
	\begin{equation}\label{est.ue1}
		\norm{u_\e^{(1)}}_{L^p} \le C\norm{F}_{L^s(\Omega)}, \quad \text{with } \frac{1}{s} - \frac{1}{p} < \frac{2}{d}. 
	\end{equation}
	
	The estimate of $u_\e^{(2)}$ is similar to $u_\e^{(1)}$ by using (\ref{est.dyG}). Thus, employing \cite[Lemma 7.12]{GilbargTrudinger83} again, we obtain
	\begin{equation}\label{est.ue2}
		\norm{u_\e^{(2)}}_{L^p(\Omega)} \le C \norm{g}_{L^r(\Omega)}, \qquad \text{with } \frac{1}{r} - \frac{1}{p} < \frac{1}{d}.
	\end{equation}
	
	Next, to estimate $u_\e^{(3)}$, note that (\ref{est.Piy}) implies
	\begin{align*}
		|u_\e^{(3)}(x)| &= \bigg| \int_{\Omega}\average_\Omega \big[ \Pi_\e^*(y,x) -  \Pi_\e^*(z,x) \big] dz g(y) dy \bigg| \\
		& \le C\int_{\Omega}\average_\Omega \bigg( \frac{1}{|x-y|^{d-1}} + \frac{1}{|x-z|^{d-1}} \bigg) |g(y)| dzdy \\
		& \le C \int_{\Omega}\average_\Omega \frac{|g(y)|}{|x-y|^{d-1}}  dzdy +  C \int_{\Omega}\average_\Omega \frac{|g(y)|}{|x-z|^{d-1}}  dzdy.
	\end{align*}
	Clearly, the second term above is bounded by $C\norm{g}_{L^1(\Omega)}$ and the first term can be handle by \cite[Lemma 7.12]{GilbargTrudinger83} analogously.

	We see that  
	\begin{equation}\label{est.ue3}
		\norm{u_\e^{(3)}}_{L^p(\Omega)} \le C \norm{g}_{L^r(\Omega)}, \qquad \text{with } \frac{1}{r} - \frac{1}{p} < \frac{1}{d}.
	\end{equation}
	
	To estimate $u_\e^{(4)}$, we recall that (\ref{est.dyG}) gives $|\Upsilon_\e(x,y)| \le C\delta(x)|x-y|^{-d}$.
	Let $\hat{x} \in \partial\Omega$ such that $x\in \mathcal{C}(\hat{x})$. Then,
	\begin{equation*}
		|u_\e^{(4)}(x)| \le C \int_{\partial\Omega} \frac{\delta(x)}{|x-y|^d} |f(y)|d\sigma(y) \le C\mathcal{M}_{\partial\Omega}(f)(\hat{x}).
	\end{equation*}
	Therefore, using the $L^q$ boundedness of the Hardy-Littlewood maximal function, we derive that $\norm{(u_\e^{(4)})^*}_{L^q(\partial\Omega)} \le C \norm{f}_{L^q(\partial\Omega)}$ for any $1<q\le \infty$.
	By a general result concerning the non-tangential maximal function (see \cite[Remark 9.3]{KenigLinShen1301} or \cite[Lemma 3.3]{WeiZhang14}), we have
	\begin{equation}\label{est.ue4}
		\norm{u_\e^{(4)}}_{L^p(\Omega)} \le C\norm{f}_{L^q(\partial\Omega)}, \quad \text{with } q \ge \frac{(d-1)p}{d}.
	\end{equation}
	
	It is remaining to estimate $u_\e^{(5)}$. To this end, we write
	\begin{align*}
		|u_\e^{(5)}(x)| = \int_{\partial\Omega}\average_\Omega | \Pi_\e^*(y,x) -  \Pi_\e^*(z,x) ||f(y)| dz  d\sigma(y).
	\end{align*}
	In view of (\ref{est.Piy}), we decompose the set $\partial\Omega \times \Omega$, according to a fixed $x$, into $E_x$ and $ E_x'$, where
	\begin{equation*}
		E_x = \{(y,z)\in \partial\Omega \times \Omega: |x-y|>|x-z| \}  \quad \text{and} \quad E_x' = \partial\Omega \times \Omega\setminus E_x.
	\end{equation*}
	Now it follows from (\ref{est.Piy}) that
	\begin{align*}
		|u_\e^{(5)}(x)| & \le C \iint_{E_x} \frac{|f(y)|}{|x-z|^{d-1}} dzd\sigma(y) + C\iint_{E_x'} \frac{\delta(x) |f(y)|}{|x-y|^d} d\sigma(y) dz\\
		& \le C\int_{\partial\Omega} |f(y)|d\sigma(y) + C\mathcal{M}_{\partial\Omega}(f)(\hat{x}) \\
		& \le C\mathcal{M}_{\partial\Omega}(f)(\hat{x}),
	\end{align*}
	where $\hat{x} \in \partial\Omega$ is the point such that $x\in \mathcal{C}(\hat{x})$. As before, we then obtain
	\begin{equation}\label{est.ue5}
		\norm{u_\e^{(5)}}_{L^p(\Omega)} \le C\norm{(u_\e^{(5)})^*}_{L^q(\partial\Omega)} \le  C\norm{f}_{L^q(\partial\Omega)}.
	\end{equation}
	
	Finally, (\ref{est.ueLp}) follows from (\ref{est.ue1}), (\ref{est.ue2}), (\ref{est.ue3}), (\ref{est.ue4}) and (\ref{est.ue5}) as desired.
\end{proof}

\begin{remark}\label{rmk.Max}
	If $F = 0,\, h =0$ and $g = 0$, the proof of Theorem \ref{thm.Lpqr} gives rise to the estimate of non-tangential maximal function for Dirichlet problem with $L^p$ data, i.e.,
	\begin{equation*}
		\norm{(u_\e)^*}_{L^q(\partial\Omega)} \le  C\norm{f}_{L^q(\partial\Omega)},
	\end{equation*}
	for $1<q\le \infty$. In particular, the case $p = \infty$ gives the Miranda-Agmon maximum principle, namely, $\norm{u_\e}_{L^\infty(\Omega)} \le C\norm{f}_{L^\infty(\partial\Omega)}$.
\end{remark}

The following local $L^\infty$ estimate is a key step to show (\ref{est.mainG}).

\begin{lemma}\label{lem.Linfty-estimate}
Let $\Omega$ be a $C^{1,\eta}$ domain and $(u_\varepsilon,p_\varepsilon)$ satisfy
\begin{equation}\label{def.general-system-e-4r}
\left\{
\begin{aligned}
\cL_\e(u_\e)+\nabla p_\e &= F+\rdiv{(h)} &\qquad \rtxt{ in }D_{4r},\\
\rdiv(u_\e) &=g &\qquad\rtxt{ in }D_{4r},\\
u_\e& =f &\qquad\rtxt{ on }\Delta_{4r}.
\end{aligned}
\right.	
\end{equation}
with $F\in L^s(\Omega;\R^d),\, h\in L^p(\Omega;\R^{d\times d}), \, g\in L^p(\Omega)$ and $f\in L^\infty(\Omega;\R^d)$ for some $s>d/2$ and $p>d$. Then
\begin{equation}
\begin{aligned}
\|u_\varepsilon\|_{L^\infty(D_r)}\le C\bigg\{\average_{D_{4r}} |u_\varepsilon|+&r^2\bigg(\average_{D_{4r}}|F|^s\bigg)^{1/s}+ r\bigg(\average_{D_{4r}} |h|^p\bigg)^{1/p} \\
&\qquad+r\bigg(\average_{D_{4r}} |g|^p\bigg)^{1/p}+\|f\|_{L^\infty(\Delta_{4r})}\bigg\},
\end{aligned}
\end{equation}
where $C$ depends only on $d,s,p,A$ and $\Omega$.
\end{lemma}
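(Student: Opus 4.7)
The plan is to apply the Green's function integral representation (\ref{def.intRep}) on a suitable intermediate $C^{1,\eta}$ domain $D$ with $D_{2r}\subset D\subset D_{3r}$, bound the three volume terms using the pointwise estimates of Theorem \ref{thm.green-function.pointwise}, and handle the boundary integral by combining an $L^\infty$-extension of $f$ with the Miranda-Agmon maximum principle (Remark \ref{rmk.Max}).

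First I would use the co-area formula applied to a smooth one-parameter family of truncations $\{D_s\}_{s\in[2r,3r]}$, all $C^{1,\eta}$ with constants uniform at scale $r$, to select some $D$ in this family satisfying
\begin{equation*}
\int_{\partial D\setminus\Delta_{4r}}|u_\e|\,d\sigma \;\le\; \frac{C}{r}\int_{D_{3r}\setminus D_{2r}}|u_\e|\,dx \;\le\; \frac{C}{r}\int_{D_{4r}}|u_\e|\,dx.
\end{equation*}
Fix $x\in D_r$ and write $K_\e^D(x,y) = \Upsilon_\e^D(x,y) + \Pi_\e^{D,*}(y,x)\otimes n(y)$ for the Stokes Poisson kernel of $D$. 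The representation (\ref{def.intRep}) applied on $D$ to the restriction $(u_\e|_D,p_\e|_D)$ then reads
\begin{align*}
u_\e(x) &= \int_D G_\e^D(x,y)F(y)\,dy - \int_D \nabla_y G_\e^D(x,y)h(y)\,dy \\
&\quad - \int_D \Pi_\e^{D,*}(y,x)g(y)\,dy + \int_{\partial D}K_\e^D(x,y)u_\e(y)\,d\sigma(y).
\end{align*}
The three volume terms I would estimate via H\"older's inequality together with the pointwise bounds $|G_\e^D(x,y)|\lesssim|x-y|^{2-d}$, $|\nabla_yG_\e^D(x,y)|\lesssim|x-y|^{1-d}$, and $|\Pi_\e^{D,*}(y,x)|\lesssim|x-y|^{1-d}$, coming respectively from (\ref{est.ptG}), (\ref{est.dyG}) and Remark \ref{rmk.Pi.Bdry}; the hypotheses $s>d/2$ and $p>d$ are exactly what make the resulting Riesz-type integrals converge, producing the targeted bounds $Cr^2(\average_{D_{4r}}|F|^s)^{1/s}$, $Cr(\average_{D_{4r}}|h|^p)^{1/p}$ and $Cr(\average_{D_{4r}}|g|^p)^{1/p}$.

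The boundary integral I would split along $\partial D = (\partial D\cap\Delta_{4r})\cup(\partial D\setminus\Delta_{4r})$. On the second piece, every point lies at distance at least $r$ from $x$, so $|K_\e^D(x,y)|\le Cr^{1-d}$, and the co-area choice of $D$ immediately yields a contribution of order $C\average_{D_{4r}}|u_\e|$. The real obstacle is the first piece, where $u_\e=f$ but the pressure part of $K_\e^D$ only decays like $|x-y|^{1-d}$, so $K_\e^D(x,\cdot)$ fails to be absolutely integrable on the surface; one must exploit the cancellation between $\Upsilon_\e^D$ and $\Pi_\e^{D,*}\otimes n$ through the $L^\infty$ maximum principle. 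To do this I would construct an $L^\infty$-extension $\bar f$ of $f|_{\partial D\cap\Delta_{4r}}$ to all of $\partial D$ satisfying $\|\bar f\|_{L^\infty(\partial D)}\le C\|f\|_{L^\infty(\Delta_{4r})}$ together with the flux compatibility $\int_{\partial D}\bar f\cdot n\,d\sigma=0$ (for instance, setting $\bar f=f$ on $\partial D\cap\Delta_{4r}$ and $\bar f=tn$ on the complement with a scalar $t=O(\|f\|_{L^\infty})$ enforcing zero flux), and then write
\begin{equation*}
\int_{\partial D\cap\Delta_{4r}}K_\e^D f\,d\sigma = \int_{\partial D}K_\e^D\bar f\,d\sigma \;-\; \int_{\partial D\setminus\Delta_{4r}}K_\e^D\bar f\,d\sigma.
\end{equation*}
The first term on the right is the value at $x$ of the homogeneous Stokes solution on $D$ with datum $\bar f$, hence is bounded by $C\|f\|_{L^\infty(\Delta_{4r})}$ by the Miranda-Agmon principle, while the second is again at distance $\ge r$ from $x$ and is controlled pointwise by $Cr^{1-d}\|\bar f\|_{L^\infty(\partial D)}|\partial D|\le C\|f\|_{L^\infty(\Delta_{4r})}$. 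Collecting all the bounds and taking the supremum over $x\in D_r$ yields the desired estimate.
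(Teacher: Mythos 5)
Your argument is correct in substance, but it follows a genuinely different route from the paper's. The paper proves the lemma by decomposing $(u_\e,p_\e)=(u_\e^{(1)},p_\e^{(1)})+(u_\e^{(2)},p_\e^{(2)})$ in an intermediate domain $\widetilde D$: the piece $u_\e^{(2)}$ carries all the data $F,h,g-\widetilde C,f$ with zero boundary values on $\partial\widetilde D\setminus\partial\Omega$ and is bounded in $L^\infty$ directly by Theorem \ref{thm.Lpqr}, while the homogeneous piece $u_\e^{(1)}$ (boundary data $u_\e$ on the interior part of $\partial\widetilde D$) is handled by the uniform H\"older estimate together with a Fabes--Stroock convexity argument to pass from an $L^2$ to an $L^1$ average; the constant $\widetilde C$ is inserted to restore compatibility for both subproblems. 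You instead represent $u_\e$ itself via (\ref{def.intRep}) in a co-area-selected intermediate domain, estimate the three volume kernels by Theorem \ref{thm.green-function.pointwise} (your convergence conditions $s>d/2$, $p>d$ are exactly right), control the interior boundary piece by the co-area trace bound, and recover the cancellation in the Poisson kernel on $\partial D\cap\Delta_{4r}$ through the extension $\bar f$ plus the Miranda--Agmon principle (Remark \ref{rmk.Max}); this is precisely the cancellation the paper never has to touch at this stage because it is already packaged inside the estimates (\ref{est.ue4})--(\ref{est.ue5}) of Theorem \ref{thm.Lpqr}, which it applies to $u_\e^{(2)}$. What the paper's route buys is that it avoids the delicate kernel manipulation against $f$ on the coincident boundary and needs no co-area selection, at the cost of invoking H\"older estimates and the convexity argument; your route is more kernel-explicit and dispenses with those tools, at the cost of two technical points you should tidy up: (a) your $\bar f$ is discontinuous, hence not in $H^{1/2}(\partial D)$, so rather than literally solving a weak Dirichlet problem you should either mollify $\bar f$ on $\partial D$ or observe that the bound $\lvert\int_{\partial D}K_\e^D(x,\cdot)\bar f\rvert\le C\norm{\bar f}_{L^\infty(\partial D)}$ follows directly from the kernel estimates underlying (\ref{est.ue4}) and (\ref{est.ue5}) (which only require bounded data and the normalization of $\Pi_\e^{D,*}$); and (b) the representation (\ref{def.intRep}) in $D$ should be justified for the trace data $u_\e|_{\partial D}\in H^{1/2}(\partial D)$, which is standard but worth a sentence, and is cleanest if the co-area selection also controls the $L^2$ (or $H^1$) boundary norm of $u_\e$, as the paper does in Step 2 of the proof of Theorem \ref{thm.boundary-estimate}. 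Neither point is a gap in the main idea.
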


\begin{proof}
By a rescaling argument, we may assume $r=1$. Let $\widetilde{D}$ be a $C^{1,\eta}$ domain such that $D_2\subset \widetilde{D}\subset D_4$. Consider $u_\varepsilon=u_\varepsilon^{(1)}+u_\varepsilon^{(2)}$, $p_\varepsilon=p_\varepsilon^{(1)}+p_\varepsilon^{(2)}$, where $(u_\varepsilon^{(1)},p_\varepsilon^{(1)})$, $(u_\varepsilon^{(2)},p_\varepsilon^{(2)})$ satisfy the following systems
\begin{equation*}
\left\{
\begin{aligned}
\cL_\e(u_\e^{(1)})+\nabla p_\e^{(1)} &= 0 & \text{ in }\widetilde{D},\\
\rdiv(u_\e^{(1)}) &=\widetilde{C} &\text{ in }\widetilde{D},\\
u_\e^{(1)}& =0 &\text{ on }\partial\widetilde{D}\cap\partial \Omega,\\
u_\e^{(1)}& =u_\varepsilon &\text{ on }\partial\widetilde{D}\backslash\partial \Omega,
\end{aligned}
\right.	
\end{equation*}
and
\begin{equation*}
\left\{
\begin{aligned}
\cL_\e(u_\e^{(2)})+\nabla p_\e^{(2)} &= F+\text{div}(h) & \text{ in }\widetilde{D},\\
\rdiv(u_\e^{(2)}) &=g-\widetilde{C} &\text{ in }\widetilde{D},\\
u_\e^{(2)}& =f &\text{ on }\partial\widetilde{D}\cap\partial \Omega,\\
u_\e^{(2)}& =0 &\text{ on }\partial\widetilde{D}\backslash\partial \Omega,
\end{aligned}
\right.	
\end{equation*}
where the constant $\widetilde{C}$, defined by
$$
\widetilde{C}=\average_{\widetilde{D}} g -\frac{1}{|\widetilde{D}|}\int_{\partial\widetilde{D}\cap \partial \Omega} f\cdot n\, d\sigma,
$$
is applied to adjust the compatibility condition for both systems.

By Theorem \ref{thm.Lpqr}, we see that $u_\e^{(2)}$ is bounded by
\begin{equation}\label{ineq.ue2-Linfty}
\|u_\varepsilon^{(2)}\|_{L^\infty(\widetilde{D}))}\le C\left\{ \|f\|_{L^\infty(\widetilde{\Delta})}+\|F\|_{L^s(\widetilde{D}))}+\|g-\widetilde{C}\|_{L^p(\widetilde{D}))}+\|h\|_{L^p(\widetilde{D})}\right\},
\end{equation}
To handle $u_\e^{(1)}$, for any $t>0$ and $x\in \widetilde{D}$ such that $D_{2t}(x) \subset \widetilde{D}$, we apply the uniform H\"older estimate (see \cite{GuShen15}) to obtain,
\begin{equation*}
\|u_\varepsilon^{(1)}\|_{L^\infty(D_t(x))}\le  C\bigg\{|\widetilde{C}|+\bigg(\average_{D_{2t}(x)}|u_\varepsilon^{(1)}|^2\bigg)^{1/2}\bigg\}.
\end{equation*}
By a convexity argument (see \cite[pp. 1004-1005]{FabesStroock84}), we have
\begin{equation*}
\begin{aligned}
\|u_\varepsilon^{(1)}\|_{L^\infty(D_1)}&\le C\left\{ |\widetilde{C}|+\average_{D_2}\big|u_\varepsilon^{(1)}\big|\right\}\\
&\le C\left\{|\widetilde{C}|+\average_{D_2}|u_\varepsilon|+\|u_\varepsilon^{(2)}\|_{L^\infty(\widetilde{D})}\right\}.\\
\end{aligned}
\end{equation*}
This, together with (\ref{ineq.ue2-Linfty}), leads to
\begin{equation*}
\|u_\varepsilon\|_{L^\infty(D_1)}\le C\left\{\average_{D_{4}} |u_\varepsilon|+\|F\|_{L^s(D_{4})}+\|h\|_{L^p(D_{4})}+\|g\|_{L^p(D_{4})}+\|f\|_{L^\infty(\Delta_{4})}\right\}.
\end{equation*}
The proof is complete.
\end{proof}

\begin{lemma}\label{lem.ue-u0.infty}
Assume $\Omega$ is a $C^{1,\eta}$ domain. Let $(u_\varepsilon,p_\varepsilon)\in H^1(D_{4r};\mathbb{R}^d)\times L^2(D_{4r})$ satisfy (\ref{def.general-system-e-4r}), and $(u_0,p_0)\in W^{2,p}(D_{4r};\mathbb{R}^d)\times W^{1,p}(D_{4r})$ satisfy the corresponding homogenized system, for some $p>d$. Then
\begin{equation}\label{ineq.ue-u0.infty}
\|u_\varepsilon-u_0\|_{L^\infty(D_r)} \le C\bigg\{\average_{D_{4r}} |u_\varepsilon-u_0|+\varepsilon r\bigg(\average_{D_{4r}} |\nabla^2 u_0|^p\bigg)^{1/p}+\varepsilon\|\nabla u_0\|_{L^\infty(D_{4r})}\bigg\}.
\end{equation}
\end{lemma}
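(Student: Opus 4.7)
The natural plan is to reduce to Lemma~\ref{lem.Linfty-estimate} by subtracting the first-order corrector from $u_\varepsilon-u_0$. Introduce the two-scale remainder
$$w_\varepsilon(x) = u_\varepsilon(x) - u_0(x) - \varepsilon\chi_j^\beta(x/\varepsilon)\,\partial_j u_0^\beta(x).$$
Since the periodic correctors $\chi_j^\beta$ are uniformly bounded, one has $\|u_\varepsilon - u_0 - w_\varepsilon\|_{L^\infty(D_{4r})} \le C\varepsilon \|\nabla u_0\|_{L^\infty(D_{4r})}$, and this loss is absorbed by the third term on the right-hand side of (\ref{ineq.ue-u0.infty}). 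It is therefore enough to prove the analogous pointwise estimate for $w_\varepsilon$ itself.

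To produce a Stokes system for $w_\varepsilon$ with small right-hand side, I would invoke Lemma~\ref{lem.w-system} with the non-Dirichlet choice $V_{\varepsilon,j}^\beta(x) = \varepsilon\chi_j^\beta(x/\varepsilon) + P_j^\beta(x)$ and $T_{\varepsilon,j}^\beta(x) = \pi_j^\beta(x/\varepsilon)$. With this pairing the two terms on the right-hand side of (\ref{eq.w-system}) that do not manifestly carry a factor of $\varepsilon$ vanish identically, because $V_{\varepsilon,j}^{\gamma\beta}-P_j^{\gamma\beta}-\varepsilon\chi_j^{\gamma\beta}(x/\varepsilon)=0$ and $\pi_j^\beta(x/\varepsilon)-T_{\varepsilon,j}^\beta=0$. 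What remains is
$$\cL_\varepsilon w_\varepsilon + \nabla \tilde p_\varepsilon = \rtxt{div}(\tilde h) \qquad \text{in } D_{4r},$$
with $\tilde p_\varepsilon = p_\varepsilon - p_0 - \pi_j^\beta(x/\varepsilon)\partial_j u_0^\beta - \varepsilon q_{ij}^\beta(x/\varepsilon)\partial^2_{ij} u_0^\beta$, and a matrix $\tilde h$ whose entries are of the form $\varepsilon\phi_{kij}^{\alpha\beta}(x/\varepsilon)$, $\varepsilon a_{ik}^{\alpha\gamma}(x/\varepsilon)\chi_j^{\gamma\beta}(x/\varepsilon)$, and $\varepsilon q_{ij}^\beta(x/\varepsilon)$ contracted against second derivatives of $u_0$. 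By (\ref{ineq.dual-corrector-energy}) and the $L^\infty$ bounds on $\chi$ and $A$ one therefore has the pointwise bound $|\tilde h|\le C\varepsilon|\nabla^2 u_0|$. Using $\rtxt{div}(u_\varepsilon)=\rtxt{div}(u_0)=g$ together with the cell-problem identity $\partial_{y_\gamma}\chi_j^{\gamma\beta}=0$, one computes
$$\rtxt{div}(w_\varepsilon) = -\varepsilon\chi_j^{\gamma\beta}(x/\varepsilon)\,\partial^2_{\gamma j} u_0^\beta =: \tilde g, \qquad |\tilde g|\le C\varepsilon|\nabla^2 u_0|,$$
and on the Dirichlet portion $\Delta_{4r}$ one has $w_\varepsilon = -\varepsilon\chi_j^\beta(x/\varepsilon)\partial_j u_0^\beta$, hence $\|w_\varepsilon\|_{L^\infty(\Delta_{4r})} \le C\varepsilon\|\nabla u_0\|_{L^\infty(D_{4r})}$.

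Finally, I would apply Lemma~\ref{lem.Linfty-estimate} to the pair $(w_\varepsilon,\tilde p_\varepsilon)$ with $F=0$, $h=\tilde h$, $g=\tilde g$, and boundary datum $w_\varepsilon|_{\Delta_{4r}}$. The three nontrivial terms on its right-hand side reduce exactly to $\average_{D_{4r}}|w_\varepsilon|$, a multiple of $\varepsilon r(\average_{D_{4r}}|\nabla^2 u_0|^p)^{1/p}$, and a multiple of $\varepsilon\|\nabla u_0\|_{L^\infty(D_{4r})}$; substituting $w_\varepsilon = u_\varepsilon - u_0 - \varepsilon\chi_j^\beta(x/\varepsilon)\partial_j u_0^\beta$ back on both sides and using the bound $\|\chi\|_{L^\infty}\le C$ once more converts this into the claimed estimate (\ref{ineq.ue-u0.infty}). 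I do not foresee any genuine obstacle; the only nonobvious ingredient is the algebraic observation that the specific choice $(V_{\varepsilon,j}^\beta,T_{\varepsilon,j}^\beta) = (\varepsilon\chi_j^\beta(\cdot/\varepsilon)+P_j^\beta,\pi_j^\beta(\cdot/\varepsilon))$ forces every surviving contribution on the right of Lemma~\ref{lem.w-system} to carry a factor of $\varepsilon$, after which Lemma~\ref{lem.Linfty-estimate} delivers the conclusion mechanically.
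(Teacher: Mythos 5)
Your proposal is correct and coincides with the paper's own argument: the paper likewise sets $w_\varepsilon=u_\varepsilon-u_0-\varepsilon\chi^\varepsilon\nabla u_0$, applies Lemma \ref{lem.w-system} with $(V_{\varepsilon,j}^\beta,T_{\varepsilon,j}^\beta)=(\varepsilon\chi_j^\beta(\cdot/\varepsilon)+P_j^\beta,\pi_j^\beta(\cdot/\varepsilon))$ so that all surviving source terms carry a factor of $\varepsilon$, and then invokes Lemma \ref{lem.Linfty-estimate} followed by the triangle inequality. No gaps.
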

\begin{proof}
To utilize Lemma \ref{lem.w-system}, we choose $(V_{\varepsilon,j}^{\beta}(x), T_{\varepsilon,j}^\beta(x))=(\varepsilon\chi_j^\beta(x/\varepsilon)+P_j^\beta(x),\pi_j^\beta(x/\varepsilon))$, and define
\begin{equation}\label{def.wete}
w_\varepsilon=u_\varepsilon-u_0-\varepsilon \chi^\varepsilon\nabla u_0, \quad  \tau_\varepsilon=p_\varepsilon-p_0+\pi^\varepsilon\nabla u_0+\varepsilon q^\varepsilon\nabla^2 u_0.
\end{equation}
Following by Lemma \ref{lem.w-system}, $(w_\varepsilon,\tau_\varepsilon)$ satisfies
\begin{equation}\label{eq.wete}
\left\{
\begin{aligned}
\mathcal{L}_\varepsilon(w_\varepsilon)+\nabla \tau_\varepsilon&=\varepsilon\text{div}\big([\phi^\varepsilon+A^\varepsilon\chi^\varepsilon]\nabla^2u_0\big)+\varepsilon\text{div}\big(q^\varepsilon\nabla^2u_0\big) &\text{ in }D_{4r},\\
\text{div}(w_\varepsilon)&=-\varepsilon\chi^\varepsilon\text{div}(\nabla u_0) &\text{ in }D_{4r},\\
w_\varepsilon&=-\varepsilon\chi^\varepsilon\nabla u_0 &\text{ on }\Delta_{4r}.
\end{aligned}
\right.
\end{equation}
Then by using Lemma \ref{lem.Linfty-estimate}, we have
\begin{equation*}
\|w_\varepsilon\|_{L^\infty(D_r)}\le C\bigg\{\average_{D_{4r}} |w_\varepsilon|+\varepsilon r\bigg(\average_{D_{4r}} |\nabla^2 u_0|^p\bigg)^{1/p}+\varepsilon\|\nabla u_0\|_{L^\infty(\Delta_{4r})}\bigg\},
\end{equation*}
which implies (\ref{ineq.ue-u0.infty}) by a triangle inequality.
\end{proof}

\subsection{Proof of Theorem \ref{thm.main} (i)}
Now with the help of Lemma \ref{lem.ue-u0.infty} and a duality argument, we are able to prove (\ref{est.mainG}).

\begin{proof}[Proof of Theorem \ref{thm.main}, Part (i)]
	We fix $x_0,\, y_0 \in \Omega$ and set $r = |x_0 - y_0|/8$. Let $F\in C_0^\infty(D_r(y_0);\R^d)$ and define
	\begin{equation}\label{eq.ueu0F}
		u_\e(x) = \int_{\Omega}G_\e(x,y)F(y) dy, \quad \text{and} \quad u_0(x) = \int_{\Omega}G_0(x,y)F(y) dy.
	\end{equation}
	Then, in view of (\ref{def.intRep}) (or (\ref{def.intRep.onlyF})), $(u_\varepsilon,p_\varepsilon)$ and $(u_0,p_0)$ satisfy
	\begin{equation*}
		\left\{
		\begin{aligned}
			\cL_\e(u_\e)+\nabla p_\e &= \cL_0 (u_0)+ \nabla p_0 = F &\qquad \text{ in }\Omega,\\
			\tdiv(u_\e) &= \tdiv(u_0) = 0 &\qquad\text{ in }\Omega,\\
			u_\e & = u_0 = 0 &\qquad\text{ on }\partial\Omega.
		\end{aligned}
		\right.	
	\end{equation*}
	It follows from the $W^{2,p}$ estimate of Stokes systems with constant coefficients in $C^{1,1}$ domains \cite{GiaquintaModica82} that for any $1<p<\infty$
	\begin{equation}\label{est.ddu0Lp}
		\norm{\nabla^2 u_0}_{L^p(\Omega)} \le C\norm{F}_{L^p(D_r(y_0))}.
	\end{equation}
	Also, by (\ref{eq.ueu0F}) and (\ref{est.dxG}), we have
	\begin{equation}\label{est.du0inf}
		\norm{\nabla u_0}_{L^\infty(\Omega)} \le C r \bigg( \average_{D_r(y_0)} |F|^p\bigg)^{1/p},
	\end{equation}
	for $p>d$.

	Let $(w_\e, \tau_\e)$ be the same as (\ref{def.wete}). Then, $(w_\e, \tau_\e)$ satisfies the same system (\ref{eq.wete}) in the domain $\Omega$. Thus, it follows from Lemma \ref{lem.ue-u0.infty} that
	\begin{align}\label{est.weinf}
		\begin{aligned}
			\|w_\varepsilon\|_{L^\infty(D_r(x_0))} & \le C\bigg\{\average_{D_{4r}(x_0)} |w_\varepsilon|+\varepsilon r\bigg(\average_{D_{4r}(x_0)} |\nabla^2 u_0|^p\bigg)^{1/p}+\varepsilon\|\nabla u_0\|_{L^\infty(D_{4r}(x_0))}\bigg\} \\
			& \le C\bigg\{\average_{D_{4r}(x_0)} |w_\varepsilon|+\varepsilon r \bigg( \average_{D_r(y_0)} |F|^p\bigg)^{1/p} \bigg\},
		\end{aligned}
	\end{align}
	where we have used (\ref{est.ddu0Lp}) and (\ref{est.du0inf}) in the last inequality.
	
	Now we decompose $(w_\e, \tau_\e) = (w_\e^{(1)}, \tau_\e^{(1)}) + (w_\e^{(2)}, \tau_\e^{(2)})$, where
	\begin{equation*}
		\left\{
		\begin{aligned}
			\mathcal{L}_\varepsilon(w_\varepsilon^{(1)})+\nabla \tau_\varepsilon^{(1)}&=\varepsilon\text{div}\big([\phi^\varepsilon+A^\varepsilon\chi^\varepsilon]\nabla^2u_0\big)+\varepsilon\text{div}\big(q^\varepsilon\nabla^2u_0\big) &\text{ in }\Omega,\\
			\text{div}(w_\varepsilon^{(1)})&=-\varepsilon\chi^\varepsilon\text{div}(\nabla u_0) - \widetilde{C} &\text{ in }\Omega,\\
			w_\varepsilon^{(1)}&=0 &\text{ on }\partial\Omega,
		\end{aligned}
		\right.
	\end{equation*}
	and
	\begin{equation*}
		\left\{
		\begin{aligned}
			\mathcal{L}_\varepsilon(w_\varepsilon^{(2)})+\nabla \tau_\varepsilon^{(2)}&=0 &\text{ in }\Omega,\\
			\text{div}(w_\varepsilon^{(2)})&= \widetilde{C} &\text{ in }\Omega,\\
			w_\varepsilon^{(2)}&= -\e \chi^\e\nabla u_0 &\text{ on }\partial\Omega,
		\end{aligned}
		\right.
	\end{equation*}
	and $\widetilde{C} = -\average_{\Omega} \e \chi^\e \tdiv(\nabla u_0) = - \frac{1}{|\Omega|}\int_{\partial\Omega} \e \chi^\e \nabla u_0 \cdot n d\sigma $ is a constant to adjust the compatibility condition.
	
	For $w_\e^{(1)}$, the energy estimate (\ref{ineq.energy}) and (\ref{est.ddu0Lp}) provide
	\begin{equation*}
		\norm{w_\e^{(1)}}_{H^1(\Omega)} \le C\e\norm{\nabla^2 u_0}_{L^2(\Omega)} \le C\e \norm{F}_{L^2(D_r(y_0))}.
	\end{equation*}
	By the H\"{o}lder's inequality and Sobolev embedding theorem, we have
	\begin{equation}\label{est.we1H1}
		\norm{w_\e^{(1)}}_{L^2(D_r(x_0))} \le Cr \norm{w_\e^{(1)}}_{L^q(\Omega)} \le C\e r\norm{F}_{L^2(D_r(y_0))},
	\end{equation}
	where $q = 2d/(d-2)$. For $w_\e^{(2)}$, we use Theorem \ref{thm.Lpqr} to conclude
	\begin{equation}\label{est.we2inf}
		\norm{w_\e^{(2)}}_{L^\infty(\Omega)} \le C\e \norm{\nabla u_0}_{L^\infty(\Omega)}.
	\end{equation}
	Combining (\ref{est.we1H1}) and (\ref{est.we2inf}), we have
	\begin{equation*}
		\begin{aligned}
			\bigg( \average_{D_r(x_0)} |w_\e|^2 \bigg)^{1/2} &\le C\e r \bigg( \average_{D_r(y_0)} |F|^2 \bigg)^{1/2} + C\e \norm{\nabla u_0}_{L^\infty(\Omega)} \\
			& \le C\e r \bigg( \average_{D_r(y_0)} |F|^p\bigg)^{1/p}.
		\end{aligned}
	\end{equation*}
	This, together with (\ref{est.weinf}) and (\ref{est.du0inf}), leads to
	\begin{equation*}
		|u_\e(x_0) - u_0(x_0)| \le C\e r \bigg( \average_{D_r(y_0)} |F|^p\bigg)^{1/p},
	\end{equation*}
	where $p>d$. Now, in view of (\ref{eq.ueu0F}), it follows by duality that
	\begin{equation}\label{est.GeG0p}
		\bigg(\average_{D_r(y_0)} |G_\e(x_0,y) - G_0(x_0,y)|^{p'} dy\bigg)^{1/p'} \le C\e r^{1-d},
	\end{equation}
	where $p' = p/(p-1)$.
	
	Finally, recall that $G_\e(y,x)^T = G_\e^*(x,y)$. Then, 
	\begin{equation*}
		\left\{
		\begin{aligned}
			\cL_\e^*(G_\e(x_0,\cdot)^T)+\nabla \Pi_\e^*(\cdot,x_0) &= \cL_0^* (G_0(x_0,\cdot)^T)+ \nabla \Pi_0^*(\cdot,x_0) = 0 &\qquad \text{ in }D_r(y_0),\\
			\tdiv(G_\e(x_0,\cdot)^T) &= \tdiv(G_0(x_0,\cdot)^T) = 0 &\qquad\text{ in }D_r(y_0),\\
			G_\e(x_0,\cdot)^T & = G_0(x_0,\cdot)^T = 0 &\qquad\text{ on } \Delta_r(y_0).
		\end{aligned}
		\right.	
	\end{equation*}
	We may apply Lemma \ref{lem.ue-u0.infty} again to conclude that
	\begin{equation*}
		\begin{aligned}
			|G_\e(x_0,y_0) - G_0(x_0,y_0)| &\le C\bigg\{  \average_{D_r(y_0)} |G_\e(x_0,y) - G_0(x_0,y)| dy \\
			& \qquad + \e r \bigg( \average_{D_r(y_0)} |\nabla^2_y G_0(x_0,y)|^p dy\bigg)^{1/p} \\
			&\qquad + \e \norm{\nabla G_0(x_0,\cdot)}_{L^\infty(D_r(y_0))}  \bigg\} \\
			& \le C\e r^{1-d},
		\end{aligned}
	\end{equation*}
	where we have used (\ref{est.GeG0p}) and the following $W^{2,p}$ estimate (see \cite{GiaquintaModica82})
	\begin{equation*}
		\bigg( \average_{D_r(y_0)} |\nabla^2 G_0(x_0,y)|^p dy\bigg)^{1/p} \le Cr^{-2} \norm{G_0(x_0,\cdot)}_{L^\infty(D_{2r}(y_0))} \le Cr^{-d}.
	\end{equation*}
	 This ends the proof.
\end{proof}

\begin{theorem}\label{thm.convergence-rates.Lq}
Suppose that $A$ satisfies (\ref{cond.ellipticity}), (\ref{cond.periodicity}) and \ref{cond.holder}), and $\Omega$ is a bounded $C^{1,\eta}$ domain with $R_0=\text{\rm diam}(\Omega)$. Let $F\in L^2(\Omega;\mathbb{R}^d)$ and $(u_\varepsilon,p_\varepsilon)\in H^1_0(\Omega;\mathbb{R}^d)\times L^2_0(\Omega)$ be a weak solution of 
\begin{equation*}
\left\{
\begin{aligned}
\mathcal{L}_\varepsilon(u_\varepsilon)+\nabla p_\varepsilon &=F &\quad\text{ in }\Omega,\\
\text{\rm div}(u_\varepsilon) & =0 &\quad\text{ in }\Omega,\\
u_\varepsilon &=0 &\quad\text{ on }\partial\Omega.
\end{aligned}
\right.
\end{equation*}
Then if (i) $1<p<d$ and $\frac{1}{q}=\frac{1}{p}-\frac{1}{d}$, or (ii) $p>d$ and $q=\infty$, we have
\begin{equation}\label{ineq.convergence-rates.Lq}
\|u_\varepsilon-u_0\|_{L^q(\Omega)}\le C\varepsilon\|F\|_{L^p(\Omega)}.
\end{equation}
Moreover,
\begin{equation}\label{ineq.convergence-rates.Linfty}
\|u_\varepsilon-u_0\|_{L^\infty(\Omega)}\le C\varepsilon(\ln[\varepsilon^{-1}R_0+2])^{1-\frac{1}{d}}\|F\|_{L^d(\Omega)}.
\end{equation}
\end{theorem}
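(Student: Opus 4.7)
The plan is to combine the integral representation (\ref{def.intRep.onlyF}) with the Green's function estimate (\ref{est.mainG}) from Theorem \ref{thm.main}, and then apply either fractional integration or H\"older's inequality depending on the range of $p$. Since $h = g = 0$ and $f = 0$, the representation (\ref{def.intRep.onlyF}) gives
\begin{equation*}
u_\varepsilon(x) - u_0(x) = \int_\Omega \bigl[ G_\varepsilon(x,y) - G_0(x,y) \bigr] F(y)\, dy,
\end{equation*}
so estimate (\ref{est.mainG}) yields
\begin{equation*}
|u_\varepsilon(x) - u_0(x)| \le C\varepsilon \int_\Omega \frac{|F(y)|}{|x-y|^{d-1}}\, dy.
\end{equation*}

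For case (i) with $1 < p < d$ and $1/q = 1/p - 1/d$, I would invoke the Hardy--Littlewood--Sobolev / Riesz potential estimate (in the form of \cite[Lemma 7.12]{GilbargTrudinger83}), which states exactly that convolution with $|x-y|^{1-d}$ maps $L^p(\Omega)$ into $L^q(\Omega)$ with norm controlled by $|\Omega|^{1/d}$. This delivers (\ref{ineq.convergence-rates.Lq}) immediately. For case (ii) with $p > d$ and $q = \infty$, I would use H\"older's inequality with exponent $p' = p/(p-1)$: since $p > d$ forces $(d-1)p' < d$, the integral $\int_\Omega |x-y|^{-(d-1)p'}\,dy$ is bounded by $C R_0^{d-(d-1)p'}$ uniformly in $x$, yielding $\|u_\varepsilon - u_0\|_{L^\infty(\Omega)} \le C\varepsilon \|F\|_{L^p(\Omega)}$.

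The main obstacle, and the only nonroutine step, is the endpoint case $p = d$, $q = \infty$, since H\"older with $p' = d/(d-1)$ produces $\int |x-y|^{-d}\, dy$, which diverges logarithmically. The trick is that (\ref{est.mainG}) is not the sharpest available bound for small $|x-y|$: combining it with the trivial pointwise bounds $|G_\varepsilon(x,y)|, |G_0(x,y)| \le C |x-y|^{2-d}$ from (\ref{est.ptG}) gives
\begin{equation*}
|G_\varepsilon(x,y) - G_0(x,y)| \le C \min\!\left\{ \frac{1}{|x-y|^{d-2}},\ \frac{\varepsilon}{|x-y|^{d-1}} \right\},
\end{equation*}
where the first bound is better precisely when $|x-y| < \varepsilon$. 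Splitting the $y$-integral at the scale $\varepsilon$ and applying H\"older with $d' = d/(d-1)$ to each piece separately, one gets
\begin{equation*}
\int_{|x-y| < \varepsilon} \frac{|F(y)|}{|x-y|^{d-2}}\, dy \le C \|F\|_{L^d(\Omega)}\, \varepsilon,
\end{equation*}
since $(d-2) d/(d-1) < d$, and
\begin{equation*}
\int_{\varepsilon < |x-y| < R_0} \frac{\varepsilon |F(y)|}{|x-y|^{d-1}}\, dy \le C \varepsilon \|F\|_{L^d(\Omega)} \left( \int_\varepsilon^{R_0} \frac{dr}{r} \right)^{1/d'} \le C \varepsilon \bigl(\ln[\varepsilon^{-1} R_0 + 2]\bigr)^{1 - 1/d} \|F\|_{L^d(\Omega)}.
\end{equation*}
Summing the two contributions produces (\ref{ineq.convergence-rates.Linfty}). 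Taking $x$ to the supremum over $\Omega$ completes the argument for all three cases.
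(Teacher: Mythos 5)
Your proposal is correct and follows essentially the same route as the paper: the representation (\ref{def.intRep.onlyF}) with (\ref{est.mainG}) and \cite[Lemma 7.12]{GilbargTrudinger83} for $1<p<d$, H\"older for $p>d$, and for the endpoint $p=d$ the same splitting at scale $\varepsilon$ using the bound $|x-y|^{2-d}$ from (\ref{est.ptG}) near the singularity and $\varepsilon|x-y|^{1-d}$ away from it, which is exactly how the paper obtains the logarithmic factor $(\ln[\varepsilon^{-1}R_0+2])^{1-1/d}$.
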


\begin{proof}
By part (i) of Theorem \ref{thm.main} and the integral representation (\ref{def.intRep.onlyF}), we know that
$$
|u_\varepsilon(x)-u_0(x)|\le C\varepsilon\int_\Omega \frac{|F(y)|}{|x-y|^{d-1}}dy, \quad\text{ for any }x\in \Omega,
$$
which proves (\ref{ineq.convergence-rates.Lq}) for $1<p<d$ and $\frac{1}{q}=\frac{1}{p}-\frac{1}{d}$ by the estimates of fractional integrals \cite[Lemma 7.12]{GilbargTrudinger83}. If $p>d$ and $q=\infty$, (\ref{ineq.convergence-rates.Lq}) holds true by the H\"older's inequality. To see (\ref{ineq.convergence-rates.Linfty}), note that (\ref{est.ptG}) and (\ref{est.mainG}) imply
$$
\begin{aligned}
|u_\varepsilon(x)-u_0(x)|&\le C\int_{D_\varepsilon(x)} \frac{|F(y)|}{|x-y|^{d-2}}dy + C\varepsilon\int_{\Omega\backslash D_\varepsilon(x)} \frac{|F(y)|}{|x-y|^{d-1}}dy\\
&\le C\varepsilon\|F\|_{L^d(\Omega)}+C\varepsilon(\ln[\varepsilon^{-1}R_0+2])^{1-\frac{1}{d}}\|F\|_{L^d(\Omega)}\\
&\le C\varepsilon(\ln[\varepsilon^{-1}R_0+2])^{1-\frac{1}{d}}\|F\|_{L^d(\Omega)},
\end{aligned}
$$
where we have used the H\"older's inequality in the second inequality. The proof is finished.
\end{proof}

\section{Asymptotic Expansions of $\nabla_x G_\varepsilon$ and $\Pi_\varepsilon$}
In this section, we are going to prove part (ii) of Theorem \ref{thm.main}. The key is to prove the adjustable uniform estimates (contained in Theorem \ref{thm.divergence.modifiedC1} and \ref{thm.modified-boundary-estimate}).

\subsection{Adjustable Uniform Estimates}
We first provide an adjustable Lipschitz estimate for the divergence equation $\text{div}(u)=\psi$, which will be useful in deriving a better rate for the asymptotic expansion of $\Pi_\varepsilon$. We encourage the reader to view the following theorem as a sneak peek of our new idea, namely, using the Green's functions to diminish the influence of the H\"older semi-norm of the data as small as possible.

\begin{theorem}\label{thm.divergence.modifiedC1}
Let $\Omega$ be a bounded $C^{1,\eta}$ domain and $R_0=\text{\rm diam}(\Omega)$. Given any $\psi\in C^{0,\eta}(\Omega)$ with $\int_\Omega \psi=0$, there exists a $u\in C^{1,\eta}(\Omega;\mathbb{R}^d)$ satisfying  $\text{\rm div}(u)=\psi$ in $\Omega$ and $u=0$ on $\partial\Omega$. Moreover, for any $0<t\le R_0$,
\begin{equation}\label{est.divu.Du}
\|\nabla u\|_{L^\infty(\Omega)}\le C\ln[t^{-1}R_0+2]\|\psi\|_{L^\infty(\Omega)}+Ct^\eta[\psi]_{C^{0,\eta}(\Omega)},
\end{equation}
where $C$ depends only on $\eta$, $d$ and $\Omega$.\footnote{Even in this underdetermined case, estimate (\ref{est.divu.Du}) is optimal in the sense that there exists bounded $\psi$ such that $\norm{\nabla u}_{L^\infty(\Omega)} \le C\norm{\psi}_{L^\infty(\Omega)}$ is not true for any solution of $\text{div}(u) = \psi$; see \cite{BourgainBrezis03}.}
\end{theorem}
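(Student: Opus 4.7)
The approach is to construct $u$ as the velocity component of a constant-coefficient Stokes system $-\Delta u+\nabla p = 0$, $\operatorname{div}(u)=\psi$, $u|_{\partial\Omega}=0$, whose solvability follows from $\int_\Omega\psi=0$. Classical Stokes regularity yields $u\in C^{1,\eta}(\overline\Omega;\R^d)$. By the integral representation (\ref{def.intRep}) applied with $F=h=f=0$ and $g=\psi$, one has
\[
u(x) = -\int_\Omega \Pi^*(y,x)\,\psi(y)\,dy,
\]
where $\Pi^*(\cdot,x)\in L_0^1(\Omega)$ is the pressure Green's function of the adjoint problem, normalized to have mean zero in $y$ for each fixed $x$. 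This mean-zero property is crucial: it allows us to replace $\psi(y)$ by $\psi(y)-\psi(x_0)$ inside the integrand at no cost when evaluating at an arbitrary point $x_0\in\Omega$.

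To estimate $\nabla u(x_0)$ I would pass to the singular-integral representation
\[
\partial_j u^\alpha(x_0) = -\int_\Omega \partial_{x_{0,j}}\Pi^{*\alpha}(y,x_0)\,[\psi(y)-\psi(x_0)]\,dy,
\]
interpreted in a regularized / principal-value sense, and split the domain at scale $t$ into the near piece $\Omega\cap B_t(x_0)$ and the far piece $\Omega\setminus B_t(x_0)$. The near piece is handled by the H\"older decay $|\psi(y)-\psi(x_0)|\le [\psi]_{C^{0,\eta}}|y-x_0|^\eta$, which offsets the $|y-x_0|^{-d}$ formal singularity of the kernel and yields an $O(t^\eta[\psi]_{C^{0,\eta}})$ contribution after integration in polar coordinates. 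The far piece is decomposed into dyadic annuli $A_k=B_{2^{k+1}t}(x_0)\setminus B_{2^kt}(x_0)$ for $k=0,1,\ldots,K$ with $K\simeq\log_2(R_0/t)$; on each annulus I would recast the integral using the difference estimate (\ref{est.dxyPi}), together with the mean-zero normalization of $\Pi^*$, so that the contribution of $A_k$ is controlled by an oscillation of $\nabla_x\Pi^*$ on $A_k$ times $\|\psi\|_{L^\infty}|A_k|$, which is of size $O(1)\|\psi\|_{L^\infty}$ uniformly in $k$. Summing the $O(\log(R_0/t))$ annular contributions produces the $\ln[t^{-1}R_0+2]\|\psi\|_{L^\infty}$ factor, and adding the near-part bound gives (\ref{est.divu.Du}) at the arbitrary point $x_0$.

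The main obstacle is the absence of any pointwise bound on $\nabla_{x_0}\Pi^*(y,x_0)$: as stressed in the paper just after (\ref{est.DPieOsc}), this quantity behaves like $\nabla_x^2 G$ and admits only difference (oscillation) estimates. Consequently the far-part cannot be handled by simply pulling absolute values inside and integrating a hypothetical $|y-x_0|^{-d}$ kernel. The technical remedy is to rewrite each annular piece using (\ref{est.dxyPi}) with an appropriately chosen reference source point $x_0^\sharp$ far from $A_k$, replacing $\partial_{x_0}\Pi^*(y,x_0)$ by $\partial_{x_0}\Pi^*(y,x_0)-\partial_{x_0}\Pi^*(y,x_0^\sharp)$, and then using $\|\psi\|_{L^\infty}$ together with the mean-zero property of $\Pi^*$ to absorb the anchoring constant. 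Once this careful Calder\'on--Zygmund-style rewriting is in place, the logarithmic divergence arises naturally from summing $O(1)$ contributions across $O(\log(R_0/t))$ dyadic scales, producing exactly the interpolated estimate stated in (\ref{est.divu.Du}).
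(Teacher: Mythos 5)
Your construction of $u$ (solve the constant--coefficient Stokes problem (\ref{def.Laplace-Stokes.homogeneous}) and represent $u(x)=-\int_\Omega \Pi^\Delta(y,x)\psi(y)\,dy$ via (\ref{def.intRep})) is the same starting point as the paper, but the way you then estimate $\nabla u$ has a genuine gap. Your plan differentiates the representation to get a kernel $\nabla_x\Pi^\Delta(y,x_0)$ of critical size $|y-x_0|^{-d}$ and then uses, in the near region, the pointwise bound $|\nabla_x\Pi^\Delta(y,x_0)|\lesssim |y-x_0|^{-d}$. No such pointwise estimate is available: the only control the paper establishes on the pole-variable gradient of the pressure kernel is the oscillation/difference estimate (\ref{est.dxyPi}), taken in the \emph{field} variable at a fixed pole, and this cannot be upgraded to a pointwise bound via the normalization $\int_\Omega\Pi^\Delta(z,x)\,dz=0$, because averaging the right-hand side of (\ref{est.dxyPi}) over $z\in\Omega$ diverges logarithmically at $z=x$ (contrast with (\ref{est.Pixy}), where the same trick works for $\Pi$ itself). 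The same missing bound reappears in your far-field argument: after anchoring on each annulus you are left with terms of the form $\nabla_x\Pi^\Delta(\text{anchor},x_0)\int_{A_k}(\psi-\psi(x_0))$, and neither the mean-zero normalization nor (\ref{est.dxyPi}) bounds that anchor value. Moreover, your anchoring is taken in the wrong variable: replacing $\partial_{x_0}\Pi^*(y,x_0)$ by $\partial_{x_0}\Pi^*(y,x_0)-\partial_{x_0}\Pi^*(y,x_0^\sharp)$ is a difference at two \emph{pole} points, which behaves like a second pole-derivative and is controlled by nothing in the paper; (\ref{est.dxyPi}) only compares two field points with the same pole. Finally, the differentiated, ``principal-value'' representation of $\partial_j u^\alpha(x_0)$ is itself unjustified: with a kernel whose gradient is non-integrable at the pole, moving $\nabla_x$ inside the integral requires a Calder\'on--Zygmund argument and in general produces an extra local term; acknowledging this without supplying the argument leaves the very first identity of your estimate unproven.

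The paper's proof avoids all of this by never differentiating the kernel. It uses the Schauder estimate $[\nabla u]_{C^{0,\eta}(\Omega)}\le C[\psi]_{C^{0,\eta}(\Omega)}$, then at interior points with $\delta(x)\ge 4t$ applies the Cacciopoli inequality (\ref{ineq.Cacciopoli}) on $B_{2t}(x)$ to $u-u(x)$, reducing $|\nabla u(x)|$ to $t^{-1}\sup_{z\in B_{2t}(x)}|u(z)-u(x)|$ plus an admissible $\|\psi\|_{L^\infty}$ term. The difference $u(z)-u(x)$ is then estimated from the representation at the level of $u$, using $\int_\Omega|\Pi^\Delta(y,z)-\Pi^\Delta(y,x)|\,dy$: near the poles one uses (\ref{est.Piy})/(\ref{est.Pixy}), and away from them the key Lemma \ref{lem.Pi-difference}, a pole-variable difference estimate for $\Pi^\Delta$ proved by the oscillation argument of Theorem \ref{thm.green-function.pointwise} applied to $G^\Delta(\cdot,x)-G^\Delta(\cdot,z)$ together with the mean-zero normalization; integrating its bound produces $Ct\ln[t^{-1}R_0]\|\psi\|_{L^\infty}$, and dividing by $t$ gives the logarithm in (\ref{est.divu.Du}). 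Points with $\delta(x)<4t$ are handled by comparing with a nearby interior point through the Schauder seminorm. If you want to salvage your route, you would need to first prove a pointwise bound on $\nabla_x\Pi^\Delta(y,x)$ in a $C^{1,\eta}$ domain (not available here) and rigorously justify the differentiated representation; the paper's Cacciopoli-plus-difference strategy is precisely designed to sidestep both.
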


Since $\int_\Omega \psi=0$, there exists a unique weak solution $(u,p)$ of the following Stokes system
\begin{equation}\label{def.Laplace-Stokes.homogeneous}
	\left\{
	\begin{aligned}
		\Delta u+\nabla p &=0 &\quad\text{ in }\Omega,\\
		\text{\rm div}(u) &=\psi &\quad\text{ in }\Omega,\\
		u&=0 &\quad\text{ on }\partial\Omega.\\
	\end{aligned}
	\right.
\end{equation}
Thus $u$ satisfies the divergence equation in Theorem \ref{thm.divergence.modifiedC1}. It is now sufficient to show (\ref{est.divu.Du}). To this end, we need the following lemma.

\begin{lemma}\label{lem.Pi-difference}
	Let $\Omega$ be a $C^{1,\eta}$ domain and $(G^\Delta, \Pi^\Delta)$ be the Green's function of system (\ref{def.Laplace-Stokes.homogeneous}). Then if $|x-y|>2|x-z|$,
	\begin{equation}\label{est.2TermDiff}
	\big|\Pi^\Delta(y,x) - \Pi^\Delta(y,z)\big| \le \frac{C|x-z|}{|\Omega|}\ln \bigg(\frac{R_0}{|x-z|}\bigg) + \frac{C|x-z|}{|x-y|^d}.
	\end{equation}
\end{lemma}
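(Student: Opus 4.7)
The plan is to combine three ingredients: the first-variable normalization $\int_\Omega \Pi^\Delta(w,x)\,dw = 0$ satisfied by the pressure Green's function for every fixed $x$, the mean-value theorem in the second (source) variable, and the oscillation estimate (v) of Theorem \ref{thm.green-function.pointwise}. Subtracting the mean-zero identity from $\Pi^\Delta(y,x)$ and likewise for $z$ gives
\begin{equation*}
\Pi^\Delta(y,x)-\Pi^\Delta(y,z) = \frac{1}{|\Omega|}\int_\Omega\bigl\{[\Pi^\Delta(y,x)-\Pi^\Delta(w,x)]-[\Pi^\Delta(y,z)-\Pi^\Delta(w,z)]\bigr\}\,dw,
\end{equation*}
and I would split the $w$-integration into a local piece $E_1=B(x,2|x-z|)\cap\Omega$ and a global piece $E_2=\Omega\setminus E_1$.

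For $w\in E_1$ I would bound the integrand crudely by the triangle inequality together with the pointwise estimate $|\Pi^\Delta(v,u)|\le C/|v-u|^{d-1}$ from Remark \ref{rmk.Pi.Bdry}. Since $|y-x|>2|x-z|$ forces $|y-z|\ge|y-x|/2$, each of $|y-x|^{1-d}$, $|y-z|^{1-d}$, $|w-x|^{1-d}$, $|w-z|^{1-d}$ integrates over $B(x,2|x-z|)$ to at most $C|x-z|$ (the first two because the ball has volume $C|x-z|^d$ and $|y-x|\ge 2|x-z|$, the last two by the translation-invariant bound $\int_{B_r}|w|^{1-d}\,dw = Cr$). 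Dividing by $|\Omega|$, the local contribution is at most $C|x-z|/|\Omega|$, which is dominated by the first term of (\ref{est.2TermDiff}).

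For $w\in E_2$ I would apply the mean-value theorem in the source variable,
\begin{equation*}
[\Pi^\Delta(y,x)-\Pi^\Delta(w,x)]-[\Pi^\Delta(y,z)-\Pi^\Delta(w,z)] = \int_0^1\bigl[\nabla_a\Pi^\Delta(y,a)-\nabla_a\Pi^\Delta(w,a)\bigr]_{a=(1-t)z+tx}\cdot(x-z)\,dt.
\end{equation*}
For $a$ on the segment $[z,x]$ one has $|y-a|\ge|y-x|/2$; and for $w\notin B(x,2|x-z|)$ also $|w-a|\ge|w-x|/2$. Applying Theorem \ref{thm.green-function.pointwise}(v) with ``field'' variables $y,w$ and ``source'' variable $a$ yields
\begin{equation*}
|\nabla_a\Pi^\Delta(y,a)-\nabla_a\Pi^\Delta(w,a)|\le C\bigl(|y-a|^{-d}+|w-a|^{-d}\bigr)\le C\bigl(|y-x|^{-d}+|w-x|^{-d}\bigr).
\end{equation*}
Since $\int_{E_2}|y-x|^{-d}\,dw\le|\Omega||y-x|^{-d}$ and $\int_{E_2}|w-x|^{-d}\,dw\le C\ln(R_0/|x-z|)$, multiplying by $|x-z|/|\Omega|$ produces exactly $C|x-z|/|y-x|^d+C|x-z|\ln(R_0/|x-z|)/|\Omega|$, matching (\ref{est.2TermDiff}).

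The core of the argument is that the first-variable normalization converts a second-variable difference of $\Pi^\Delta$ into an average of cross-differences, which the second-variable mean-value theorem reduces to a first-variable difference of $\nabla_a\Pi^\Delta$---precisely the quantity controlled by Theorem \ref{thm.green-function.pointwise}(v). The main subtlety is recognizing the correct form of (v) to invoke and isolating the local singular region, where the cruder pointwise bound $|\Pi^\Delta|\lesssim|v-u|^{1-d}$ suffices; this forces the local contribution to collapse to $C|x-z|/|\Omega|$, while the uniform $|a|^{-d}$ decay in the global region yields only the stated logarithmic loss.
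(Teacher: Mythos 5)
Your proposal is correct in substance and shares the paper's overall skeleton: the same normalization identity $\int_\Omega \Pi^\Delta(w,x)\,dw=0$ converting the left-hand side into an average of the four-term difference, and the same splitting of the $w$-integral at radius $2|x-z|$, with the near piece handled by the crude $|v-u|^{1-d}$ bound. Where you genuinely diverge is in how you obtain the far-field bound, i.e.\ the paper's estimate (\ref{est.4TermDiff}): the paper proves it by applying the uniform oscillation estimate of Theorem \ref{thm.boundary-estimate} to the pair $\big(G^\Delta(\cdot,x)-G^\Delta(\cdot,z),\,\Pi^\Delta(\cdot,x)-\Pi^\Delta(\cdot,z)\big)$, which solves a homogeneous Stokes system away from $\{x,z\}$, together with the covering/chain argument of Theorem \ref{thm.green-function.pointwise}; you instead write the source-variable difference as an integral of $\nabla_a\Pi^\Delta$ along $[z,x]$ and invoke the already-established estimate (\ref{est.dxyPi}) of Theorem \ref{thm.green-function.pointwise}(v) (legitimately applicable to the constant-coefficient system). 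Your route is shorter because it recycles (v); the paper's route re-runs the oscillation machinery and thereby avoids differentiating $\Pi^\Delta$ in the source variable at this point.

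The one step that would fail as written is the mean-value formula along the straight segment $[z,x]$: a bounded $C^{1,\eta}$ domain need not be convex, so the segment may leave $\Omega$, where $\nabla_a\Pi^\Delta(y,a)$ is simply undefined, and the identity $\int_0^1[\cdots]_{a=(1-t)z+tx}\cdot(x-z)\,dt$ is meaningless. This is fixable: since $\Pi^\Delta(y,a)\to 0$ as $a\to\partial\Omega$ (Remark \ref{rmk.Pi.Bdry}, a consequence of (\ref{est.Pixy})), the function $a\mapsto \Pi^\Delta(y,a)-\Pi^\Delta(w,a)$ extends by zero across $\partial\Omega$, and by stopping at the first exit points of the segment one still gets the increment bounded by $C|x-z|\sup_{a\in[z,x]\cap\Omega}|\nabla_a\Pi^\Delta(y,a)-\nabla_a\Pi^\Delta(w,a)|$, on which your lower bounds $|y-a|\ge|y-x|/2$ and $|w-a|\ge|w-x|/2$ remain valid; alternatively one replaces the segment by the paper's chain-of-balls oscillation argument. (The paper's own proof makes the analogous implicit mean-value step for $G^\Delta(v,\cdot)$ over $B_t(x)$, justified there by the boundary vanishing of $G^\Delta$.) With this point addressed, your argument goes through and yields exactly (\ref{est.2TermDiff}).
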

\begin{proof}
	Fix $x,z\in \Omega$ and set $r = |x-z|$. We first show that if $y,w \in \Omega\setminus B_{2r}(x)$, then
	\begin{equation}\label{est.4TermDiff}
	\begin{aligned}
	\Big|[\Pi^\Delta(y,x) - \Pi^\Delta(y,z)] - [\Pi^\Delta(w,x) - \Pi^\Delta(w,z)]\Big| \le C \bigg\{ \frac{|x-z|}{|x-y|^d}  + \frac{|x-z|}{|x-w|^d} \bigg\}.
	\end{aligned}
	\end{equation}
	Observe that to see (\ref{est.4TermDiff}), it suffices to show
	\begin{equation}\label{est.oscPiDelta}
	\osc{\Omega\setminus B_t(x)}{\Pi^\Delta(\cdot,x) - \Pi^\Delta(\cdot,z)} \le \frac{C|x-z|}{t^d}, \qquad \text{for any } t\ge 2r.
	\end{equation}
	Here we use a familiar argument as in Theorem \ref{thm.green-function.pointwise}. Note that $\Pi^\Delta(\cdot,x) - \Pi^\Delta(\cdot,z)$ and $G^\Delta(\cdot,x) - G^\Delta(\cdot,z)$ satisfy
	\begin{equation}\label{eq.LaplaceStokes.Green}
	\left\{
	\begin{aligned}
	-\Delta \big(G^\Delta (\cdot,x) - G^\Delta (\cdot,z) \big)+\nabla \big( \Pi^\Delta(\cdot,x) - \Pi^\Delta(\cdot,z) \big) &= 0 \qquad &\text{ in }& \Omega\setminus\{x,z\},\\
	\text{div} \big(G^\Delta (\cdot,x) - G^\Delta (\cdot,z) \big) &= 0 \qquad & \text{ in }& \Omega,\\
	\big(G^\Delta (\cdot,x) - G^\Delta (\cdot,z) \big) & = 0 \qquad & \text{ on }&\partial\Omega.
	\end{aligned}
	\right.	
	\end{equation}
	Let $B$ be a ball such that $\text{dist}(B_t(x),2B) \simeq 2^k t \simeq \text{diam}(B)$ for some $k\ge 0$. Then by applying Theorem \ref{thm.boundary-estimate}, we have
	\begin{equation*}
	\begin{aligned}
	\osc{B\cap \Omega}{\Pi^\Delta(\cdot,x) - \Pi^\Delta(\cdot,z)} 
	&\le \frac{C}{2^k t} \bigg( \average_{2B \cap \Omega} |G^\Delta(v,x) - G^\Delta(v,z)|^2 dw\bigg)^{1/2} \\
	& \le \frac{C|x-z|}{2^k t} \sup_{v\in 2B,\; \zeta\in B_{t}(x)} |\nabla_\zeta G^\Delta(v,\zeta)| \\
	& \le \frac{C|x-z|}{(2^k t)^d},
	\end{aligned}
	\end{equation*}
	where we have used (\ref{est.dyG}) in the last inequality. Following by the same covering argument in Theorem \ref{thm.green-function.pointwise}, we obtain the desired estimate (\ref{est.oscPiDelta}).
	
	Finally, we claim that (\ref{est.4TermDiff}) implies (\ref{est.2TermDiff}). Actually, by our assumption that $\int_{\Omega} \Pi^\Delta(w,x)\, dw = 0 $ for any $x\in \Omega$, we have
	\begin{equation*}
	\begin{aligned}
	\big|\Pi^\Delta(y,x) - \Pi^\Delta(y,z)\big| & \le \average_{\Omega} \Big|[\Pi^\Delta(y,x) - \Pi^\Delta(y,z)] - [\Pi^\Delta(w,x) - \Pi^\Delta(w,z)]\Big| dw \\
	& \le \frac{C}{|\Omega|} \int_{\Omega \setminus B_{2r}(x)} \bigg\{ \frac{|x-z|}{|x-y|^d}  + \frac{|x-z|}{|x-w|^d} \bigg\}dw \\
	& \qquad \qquad + \frac{C}{|\Omega|} \int_{\Omega \cap B_{2r}(x)} \bigg\{ \frac{1}{|x-w|^{d-1}} + \frac{1}{|z-w|^{d-1}} \bigg\}dw \\
	& \le \frac{C|x-z|}{|\Omega|}\ln \bigg(\frac{R_0}{|x-z|}\bigg) + \frac{C|x-z|}{|x-y|^d}.
	\end{aligned}
	\end{equation*}
	where we have used (\ref{est.4TermDiff}) and (\ref{est.Piy}) in the second inequality.
\end{proof}

\begin{proof}[Proof of Theorem \ref{thm.divergence.modifiedC1}]
	
	First of all, by the Schauder's estimate for (\ref{def.Laplace-Stokes.homogeneous}), we have
	\begin{equation}\label{est.divEq.Holder}
	[\nabla u]_{C^{0,\eta}(\Omega)} \le C[\psi]_{C^{0,\eta}(\Omega)}.
	\end{equation}
For any fixed $0<t\le R_0$, we discuss the following two cases.

Case 1: Assume $ x\in \{\delta(x)\ge 4t\}$. In view of
\begin{equation}\label{ineq.laplace.dLinfty.split}
\begin{aligned}
|\nabla u(x)|&\le \bigg|\nabla u(x)-\average_{B_t(x)}\nabla u\bigg|+\bigg(\average_{B_t(x)}|\nabla u|^2\bigg)^{1/2}\\
& \le Ct^\eta[\nabla u]_{C^{0,\eta}(B_t(x))}+\bigg(\average_{B_t(x)}|\nabla u|^2\bigg)^{1/2}\\
&\le Ct^\eta [\psi]_{C^{0,\eta}(\Omega)}+\bigg(\average_{B_t(x)}|\nabla u|^2\bigg)^{1/2},
\end{aligned}
\end{equation}
it remains to estimate the second term on right-hand side of the above inequality.

Because $u-q$ is also a solution of (\ref{def.Laplace-Stokes.homogeneous}) in $B_{4t}(x)$ for any $q\in \R^d$, by the Cacciopoli's inequality (\ref{ineq.Cacciopoli}), we obtain
\begin{equation}\label{ineq.laplace.d2u.L2average}
\bigg(\average_{B_t(x)}|\nabla u(z)|^2dz\bigg)^{1/2} \le \frac{C}{t}\bigg(\average_{B_{2t}(x)}|u(z)-q|^2 dz\bigg)^{1/2}+C\bigg(\average_{B_{2t}(x)}|\psi|^2\bigg)^{1/2}.
\end{equation}
Following by the integral representation (\ref{def.intRep}), we see that
\begin{equation*}
u(x)=-\int_\Omega \Pi^\Delta(y,x)\psi(y)dy.
\end{equation*}
If we choose $q=u(x)$ in (\ref{ineq.laplace.d2u.L2average}), then for any $z\in B_{2t}(x)$,
\begin{equation}\label{ineq.laplace-green.difference}
\begin{aligned}
|u(z)-u(x)| & \le \|\psi\|_{L^\infty(\Omega)}\int_\Omega |\Pi^\Delta(y,z)-\Pi^\Delta(y,x)|\, dy\\
& \le \|\psi\|_{L^\infty(\Omega)}\int_{B_{4t}(x)} |\Pi^\Delta(y,z)-\Pi^\Delta(y,x)|dy \\
& \qquad\qquad + \|\psi\|_{L^\infty(\Omega)}\int_{\Omega\backslash B_{4t}(x)} |\Pi^\Delta(y,z)-\Pi^\Delta(y,x)|dy \\
&  \le C\|\psi\|_{L^\infty(\Omega)}\int_{B_{4t}(x)} \frac{1}{|z-y|^{d-1}}+\frac{1}{|x-y|^{d-1}}\, dy \\
& \qquad + C\|\psi\|_{L^\infty(\Omega)} \int_{\Omega\backslash B_{4t}(x)}\left[\frac{|x-z|}{|\Omega|}\ln \bigg(\frac{R_0}{|x-z|}\bigg) + \frac{|x-z|}{|x-y|^d}\right] dy \\
&\le  Ct \|\psi\|_{L^\infty(\Omega)}  + Ct\ln[t^{-1} R_0] \|\psi\|_{L^\infty(\Omega)} ,
\end{aligned}
\end{equation}
where we have used (\ref{est.Pixy}) and Lemma \ref{lem.Pi-difference} in the third inequality. Combining this with (\ref{ineq.laplace.dLinfty.split}) and (\ref{ineq.laplace.d2u.L2average}), we prove that for any $x\in\{\delta(x)\ge 4t\}$,
\begin{equation}\label{ineq.Laplace.modifiedC11.interior}
|\nabla u(x)|\le C\ln[t^{-1}R_0+2]\|\psi\|_{L^\infty(\Omega)}+Ct^\eta[\psi]_{C^{0,\eta}(\Omega)}.
\end{equation}

Case 2: For any $x\in\{\delta(x)<4t\}$, there must exist some $z\in D_{8t}(x)\cap \{ \delta(x)\ge 4t \}$. Therefore by a triangle inequality 
\begin{equation*}
\begin{aligned}
|\nabla u(x)|&\le |\nabla u(x)-\nabla u(z)|+|\nabla u(z)|\\
&\le Ct^\eta[\nabla u]_{C^{0,\eta}(D_{8t}(x))}+|\nabla u(z)|\\
&\le C\ln[t^{-1}R_0+2]\|\psi\|_{L^\infty(\Omega)}+Ct^\eta[\psi]_{C^{0,\eta}(\Omega)},
\end{aligned}
\end{equation*}
where we have used (\ref{est.divEq.Holder}) and (\ref{ineq.Laplace.modifiedC11.interior}) in the last inequality. The proof is now finished.
\end{proof}

Now we proceed to prove Theorem \ref{thm.modified-boundary-estimate}.

\begin{proof}[Proof of Theorem \ref{thm.modified-boundary-estimate}]
Part (i): Adjustable Lipschitz estimate for $u_\varepsilon$.

We choose a $C^{1,\eta}$ domain $\widetilde{D}$ such that $D_{4r}\subset \widetilde{D}\subset D_{5r}$, and decompose $(u_\varepsilon,p_\varepsilon)=(u_\varepsilon^{(1)}+u_\varepsilon^{(2)},p_\varepsilon^{(1)}+p_\varepsilon^{(2)})$ such that $(u_\varepsilon^{(1)},p_\varepsilon^{(1)})$, $(u_\varepsilon^{(2)},p_\varepsilon^{(2)})$ satisfy the following systems:
\begin{equation*}
\left\{
\begin{aligned}
\cL_\e(u_\e^{(1)})+\nabla p_\e^{(1)} &= 0 & \text{ in }\widetilde{D},\\
\rdiv(u_\e^{(1)}) &=0 &\text{ in }\widetilde{D},\\
u_\e^{(1)}& =u_\varepsilon &\text{ on }\partial\widetilde{D},\\
\end{aligned}
\right.	
\quad\text{and}\quad
\left\{
\begin{aligned}
\cL_\e(u_\e^{(2)})+\nabla p_\e^{(2)} &= F+ \text{div}(h) & \text{ in }\widetilde{D},\\
\rdiv(u_\e^{(2)}) &=0 &\text{ in }\widetilde{D},\\
u_\e^{(2)}& =0 &\text{ on }\partial\widetilde{D}.\\
\end{aligned}
\right.	
\end{equation*}
It follows from Theorem \ref{thm.boundary-estimate} and a convexity argument that
\begin{equation}\label{ineq.ue1Lip}
\begin{aligned}
\norm{\nabla u_\e^{(1)}}_{L^\infty(D_r)} + \osc{D_r}{p_\varepsilon^{(1)}}\le \frac{C}{r} \average_{D_{4r} } |u_\e^{(1)}| \le C \bigg\{ \frac{1}{r} \average_{D_{4r} } |u_\e| +\|\nabla u_\varepsilon^{(2)}\|_{L^\infty(D_{4r})} \bigg\},
\end{aligned}
\end{equation}
where we have used $\|u_\varepsilon^{(2)}\|_{L^\infty(D_{4r})}\le Cr\|\nabla u_\varepsilon^{(2)}\|_{L^\infty(D_{4r})}$ in the case that $D_{4r}$ meet the boundary $\partial\Omega$. For the interior case, namely, $D_{4r} = B_{4r}$ does not meet $\partial\Omega$, the first inequality of (\ref{ineq.ue1Lip}) holds true for $u_\varepsilon^{(1)}-q$, where $q\in \R^d$ is any constant. In particular, if we choose $q=\average_{D_{4r}} u_\varepsilon^{(2)}$, then the second inequality follows by $\|u_\varepsilon^{(2)}-q\|_{L^\infty(D_{4r})}\le Cr\|\nabla u_\varepsilon^{(2)}\|_{L^\infty(D_{4r})}$.

Let $(\widetilde{G}_\varepsilon,\widetilde{\Pi}_\varepsilon)$ denote the Green's functions of the Stokes system in $\widetilde{D}$. To estimate $\nabla u_\varepsilon^{(2)}$ on $D_{4r}$, in view of the integral representation (\ref{def.intRep})
\begin{align}
\begin{aligned}\label{eq.ue2.Fdivh}
u_\e^{(2)}(x) &=  \int_{\widetilde{D}} \widetilde{G}_\varepsilon(x,y)F(y)\,dy- \int_{\widetilde{D}} \nabla_y \widetilde{G}_\e(x,y) h(y)\,dy \\
&=\int_{\widetilde{D}} \widetilde{G}_\varepsilon(x,y)F(y)\,dy-\int_{\widetilde{D}} \nabla_y \widetilde{G}_\e(x,y) \big[h(y)-h(x)\big]dy,\\
\end{aligned}
\end{align}
it remains to estimate
\begin{equation}\label{ineq.ue2Lip}
\begin{aligned}
|\nabla u_\varepsilon^{(2)}(x)| &\le\int_{\widetilde{D}} |\nabla_x \widetilde{G}_\varepsilon(x,y)||F(y)|\,dy + \int_{\widetilde{D}} |\nabla_x \nabla_y \widetilde{G}_\varepsilon(x,y)||h(y)-h(x)|\,dy\\
&= I_1+I_2.
\end{aligned}
\end{equation}

By Theorem \ref{thm.green-function.pointwise}, it is easy to see
$$
\begin{aligned}
|\nabla_x \widetilde{G}_\varepsilon(x,y)|&\le C\min\bigg\{\frac{\text{dist}(y,\partial \widetilde{D})}{|x-y|^d}, \frac{1}{|x-y|^{d-1}}\bigg\}\\
&\le C\min\left\{\frac{\delta(y)}{|x-y|^d}, \frac{1}{|x-y|^{d-1}}\right\},
\end{aligned}
$$
It follows that for any $0<t\le r$, there exists $N$ with $r \approx 2^N t$ so that
\begin{equation}\label{ineq.ue2.F}
\begin{aligned}
I_1 &\le C\int_{\widetilde{D}\backslash B(x,t)} \frac{|F(y)|\delta(y)}{|x-y|^d}\,dy+C\|F\|_{L^p(D_{5r})}\bigg(\int_{\widetilde{D}\cap B(x,t)} \frac{dy}{|x-y|^{(d-1)p'}}\bigg)^{1/p'}\\
&\le C\sum_{i=0}^N\int_{\widetilde{\Omega} \cap B(x,2^{i+1}t)\backslash B(x,2^it)}\frac{|F(y)|\delta(y)}{|x-y|^d}\,dy+C\|F\|_{L^p(D_{5r})}\bigg(\int_{\widetilde{D}\cap B(x,t)} \frac{dy}{|x-y|^{(d-1)p'}}\bigg)^{1/p'}\\
&\le C\ln[t^{-1}r+2]|\mathcal{M}_{D_{5r},t}(F\delta)(x)|+Ct^{1-d/p}\|F\|_{L^p(D_{5r})}.
\end{aligned}
\end{equation}
Also, recall from Theorem \ref{thm.green-function.pointwise} that
$$
|\nabla_x \nabla_y \widetilde{G}_\varepsilon(x,y)|\le \frac{C}{|x-y|^d},
$$
which yields that for any $0<t \le r$,
\begin{equation}\label{ineq.ue2.h}
\begin{aligned}
I_2&\le C\|h\|_{L^\infty(D_{5r})}\int_{\widetilde{D}\backslash B(x,t)} \frac{dy}{|x-y|^d}+C[h]_{C^{0,\eta}(D_{5r})}\int_{\widetilde{D}\cap B(x,t)} \frac{dy}{|x-y|^{d-\eta}}\\
&\le C\ln\big[t^{-1}r+2\big]\|h\|_{L^\infty(D_{5r})}+Ct^{\eta}[h]_{C^{0,\eta}(D_{5r})}.
\end{aligned}
\end{equation}

Substituting (\ref{ineq.ue2.F}) and (\ref{ineq.ue2.h}) into (\ref{ineq.ue2Lip}), and combining with (\ref{ineq.ue1Lip}), we obtain the adjustable Lipschitz estimate for $u_\varepsilon$, i.e., for any $0<t \le r$,
\begin{equation*}\label{ineq.modified-Lipschitz}
\begin{aligned}
\norm{\nabla u_\e}_{L^\infty(D_r)} & \le C \bigg\{ \frac{1}{r} \average_{D_{5r} } |u_\e| +\ln[t^{-1}r+2]\|\mathcal{M}_{D_{5r},t}(F\delta)\|_{L^\infty(D_{5r})} \\  
	&\qquad +t\bigg(\average_{D_{5r}}|F|^p\bigg)^{1/p}+\ln[t^{-1}r+2]\|h\|_{L^\infty(D_{5r})}+t^{\eta}[h]_{C^{0,\eta}(D_{5r})}\bigg\}.
\end{aligned}
\end{equation*}

Part (ii): Adjustable oscillation estimate for $p_\varepsilon$.

In view of (\ref{ineq.ue1Lip}), it now remains to estimate the oscillation of $p_\varepsilon^{(2)}$. By (\ref{def.intRep.onlyF}), $p_\varepsilon^{(2)}$ can be represented by
\begin{equation*}
\begin{aligned}
p_\varepsilon^{(2)}(x)&=\int_{\widetilde{D}} \widetilde{\Pi}_\varepsilon(x,y)\big[F(y)+ \text{div}(h)(y)\big]\,dy\\
&=\int_{\widetilde{D}} \widetilde{\Pi}_\varepsilon(x,y)F(y)\,dy-\int_{\widetilde{D}}\nabla_y \widetilde{\Pi}_\varepsilon(x,y)h(y)\, dy.
\end{aligned}
\end{equation*}
The boundary integral term vanishes since $\widetilde{\Pi}_\varepsilon(x,y) = 0$ for $y\in \partial\widetilde{D}$; see Remark \ref{rmk.Pi.Bdry}. We now consider the pressure difference
\begin{equation}\label{ineq.pe2diff}
\begin{aligned}
p_\varepsilon^{(2)}(x)-p_\varepsilon^{(2)}(z)&= \int_{\widetilde{D}} [ \widetilde{\Pi}_\varepsilon(x,y)-\widetilde{\Pi}_\varepsilon(z,y)] F(y)dy \\
&\qquad + \int_{\widetilde{D}} [\nabla_y \widetilde{\Pi}_\varepsilon(x,y)-\nabla_y \widetilde{\Pi}_\varepsilon(z,y)] h(y)dy\\
& = J_1 + J_2.
\end{aligned}
\end{equation}
We first estimate $J_1$. By (\ref{est.Piy}), we have,
\begin{equation*}
\begin{aligned}
|\widetilde{\Pi}_\e(x,y) - \widetilde{\Pi}_\e(z,y)| &\le C\min \bigg\{ \frac{\text{dist}(y,\partial\widetilde{D})}{|x-y|^{d}} + \frac{\text{dist}(y,\partial\widetilde{D})}{|z-y|^d}, \frac{1}{|x-y|^{d-1}} + \frac{1}{|z-y|^{d-1}} \bigg\}\\
&\le C\min \bigg\{ \frac{\delta(y)}{|x-y|^{d}} + \frac{\delta(y)}{|z-y|^d}, \frac{1}{|x-y|^{d-1}} + \frac{1}{|z-y|^{d-1}} \bigg\}.
\end{aligned}
\end{equation*}
Then, it is natural to consider the integral over $\widetilde{D}\cap\{|x-y|\le |z-y|\}$ and $\widetilde{D}\cap\{|x-y|> |z-y|\}$ separately. By the similar argument as in (\ref{ineq.ue2.F}), we have
\begin{equation*}
\begin{aligned}
&\int_{\widetilde{D}\cap\{|x-y|\le |z-y|\}}|\widetilde{\Pi}_\varepsilon(x,y)-\widetilde{\Pi}_\varepsilon(z,y)||F(y)|dy\\
&\quad\le C\int_{\{\widetilde{D}\cap\{|x-y|\le |z-y|\}\}\backslash B_t(x)} \frac{|F(y)|\delta(y)}{|x-y|^d}\,dy\\
&\qquad\qquad\qquad+C\|F\|_{L^p(D_{4r})}\bigg(\int_{\{\widetilde{D}\cap\{|x-y|\le |z-y|\}\}\cap B_t(x)} \frac{dy}{|x-y|^{(d-1)p'}}\bigg)^{1/p'}\\
&\quad\le C\ln[t^{-1}r+2]|\mathcal{M}_{D_{5r},t}(F\delta)(x)|+Ct^{1-d/p}\|F\|_{L^p(D_{5r})},
\end{aligned}
\end{equation*}
and obviously the same estimate holds for the integral over $\widetilde{D}\cap\{|x-y|> |z-y|\}$. Hence, we arrive at
\begin{equation}\label{ineq.pe2.F}
\begin{aligned}
|J_1| \le C\ln[t^{-1}r+2]|\mathcal{M}_{D_{5r},t}(F\delta)(x)|+Ct\bigg(\average_{D_{5r}}|F|^p\bigg)^{1/p}.
\end{aligned}
\end{equation}

To estimate $J_2$, we first consider the case that $x$ and $z$ are far enough from each other, i.e., there exists some constant $c_1>0$ depending only on $d$ and $\Omega$, such that $|x-z|\ge c_1 r$. Then we construct the following auxiliary function $\zeta^{x,z}_h(y)$ for the given $x,z$ and $h(y)$,
\begin{equation*}
\zeta^{x,z}_h(y)=h(x)\frac{|z-y|^\eta}{|x-z|^\eta}+h(z)\frac{|x-y|^\eta}{|x-z|^\eta}.
\end{equation*}
It is easy to observe that $\zeta^{x,z}_h(x)=h(x)$, $\zeta^{x,z}_h(z)=h(z)$ and
\begin{align}
&\|\zeta_h^{x,z}\|_{L^\infty(D_{5r})}\le C\|h\|_{L^\infty(D_{5r})}, \label{prop.zeta.Linfty}\\
&\|\nabla \zeta^{x,z}_h(y)\|_{L^\infty(D_{5r})}\le Cr^{-\eta}\|h\|_{L^\infty(D_{5r})}[|z-y|^{\eta-1}+|x-y|^{\eta-1}], \label{prop.zeta.differential}\\
&|\zeta^{x,z}_h(y)-h(y)|\le C\min\{|x-y|^\eta,|z-y|^\eta\}\left\{[h]_{C^{0,\eta}(D_{5r})}+r^{-\eta}\|h\|_{L^\infty(D_{5r})}\right\}. \label{prop.zeta-h}
\end{align}
Inserting $\zeta_h^{x,z}(y)$ into $J_2$ and by the integration by parts, we have
\begin{equation}\label{ineq.pe2.h.zeta}
\begin{aligned}
J_2
&=\int_{\widetilde{D}}[\nabla_y \widetilde{\Pi}_\varepsilon(x,y)-\nabla_y \widetilde{\Pi}_\varepsilon(z,y)][h(y)-\zeta_h^{x,z}(y)]\,dy\\
&\qquad -\int_{\widetilde{D}}[\widetilde{\Pi}_\varepsilon(x,y)-\widetilde{\Pi}_\varepsilon(z,y)]\text{div}(\zeta^{x,z}_h(y))\,dy. 
\end{aligned}
\end{equation}
Write
\begin{equation*}
\begin{aligned}
|J_2| &\le  \int_{\widetilde{D}}|\nabla_y \widetilde{\Pi}_\varepsilon(x,y)-\nabla_y \widetilde{\Pi}_\varepsilon(z,y)||h(y)-\zeta_h^{x,z}(y)|\,dy\\
&\quad+\int_{\widetilde{D}}|\widetilde{\Pi}_\varepsilon(x,y)-\widetilde{\Pi}_\varepsilon(z,y)||\nabla\zeta^{x,z}_h(y)|\,dy\\
&=K_1+K_2.
\end{aligned}
\end{equation*}

To estimate $K_1$, by using (\ref{prop.zeta-h}), (\ref{prop.zeta.Linfty}) and (\ref{est.dxyPi}), we obtain
\begin{equation*}
\begin{aligned}
&\int_{\widetilde{D}\cap\{|x-y|\le |z-y|\}} |\nabla_y \widetilde{\Pi}_\varepsilon(x,y)-\nabla_y \widetilde{\Pi}_\varepsilon(z,y)| |h(y)-\zeta_h^{x,z}(y)|\,dy\\
&\quad\le C\big\{\|\zeta^{x,z}_h\|_{L^\infty(D_{5r})}+\|h\|_{L^\infty(D_{5r})}\big\} \int_{\{\widetilde{D}\cap\{|x-y|\le |z-y|\}\}\cap B_t(x)}\frac{1}{|x-y|^d}\,dy\\
&\qquad+C\big\{[h]_{C^{0,\eta}(D_{5r})}+r^{-\eta}\|h\|_{L^\infty(D_{5r})}\big\}\int_{\{\widetilde{D}\cap\{|x-y|\le |z-y|\}\}\backslash B_t(x)}\frac{1}{|x-y|^{d-\eta}}\,dy\\
&\quad \le C\ln[t^{-1}r+2]\|h\|_{L^\infty(D_{5r})}+Ct^{\eta}[h]_{C^{0,\eta}(D_{5r})}.
\end{aligned}
\end{equation*}
The same argument also gives the estimate for the integral over $\widetilde{D}\cap\{|x-y|>|z-y|\}$. These imply that
\begin{equation*}
\begin{aligned}
K_1\le C\ln[t^{-1}r+2]\|h\|_{L^\infty(D_{5r})}+Ct^{\eta}[h]_{C^{0,\eta}(D_{5r})}.
\end{aligned}
\end{equation*}
On the other hand, by (\ref{prop.zeta.differential}) and (\ref{est.Piy}), we have
\begin{equation*}
\begin{aligned}
K_2&\le C r^{-\eta}\|h\|_{L^\infty(D_{5r})}\int_{\widetilde{D}}\bigg(\frac{1}{|z-y|^{d-1}}+\frac{1}{|x-y|^{d-1}}\bigg)[|z-y|^{\eta-1}+|x-y|^{\eta-1}]\,dy\\
&\le C \|h\|_{L^\infty(D_{5r})}.
\end{aligned}
\end{equation*}
It follows that
\begin{equation}\label{ineq.pe2.h}
|J_2| \le C\ln[t^{-1}r+2]\|h\|_{L^\infty(D_{5r})}+Ct^{\eta}[h]_{C^{0,\eta}(D_{5r})}.
\end{equation}

Combining (\ref{ineq.pe2diff}), (\ref{ineq.pe2.F}) and (\ref{ineq.pe2.h}), we have proved that if $|x-z|\ge c_1 r$, 
\begin{equation*}
\begin{aligned}
|p_\varepsilon^{(2)}(x)-p_\varepsilon^{(2)}(z)|&\le C\big\{\ln[t^{-1}r+2]\|\mathcal{M}_{D_{5r},t}(F\delta)\|_{L^\infty(D_{5r})}+t^{1-d/p}\|F\|_{L^p(D_{5r})}\\
&\qquad+\ln[t^{-1}r+2]\|h\|_{L^\infty(D_{5r})}+t^{\eta}[h]_{C^{0,\eta}(D_{5r})}\big\}.
\end{aligned}
\end{equation*}
For the case $|x-z|<c_1 r$, one can always find a $z_1\in \widetilde{D}$, such that $|x-z_1|\ge c_2 r$ and $|z_1-z|\ge c_2 r$. Then, the above inequality still holds true by a triangle inequality $|p_\varepsilon^{(2)}(x)-p_\varepsilon^{(2)}(z)|\le |p_\varepsilon^{(2)}(x)-p_\varepsilon^{(2)}(z_1)|+|p_\varepsilon^{(2)}(z_1)-p_\varepsilon^{(2)}(z)|$. As a consequence,
\begin{equation}\label{ineq.pe2.osc.complete}
\begin{aligned}
\osc{\widetilde{D}}{p_\varepsilon^{(2)}} &\le C\Big\{\ln[t^{-1}r+2]\|\mathcal{M}_{D_{5r},t}(F\delta)\|_{L^\infty(D_{5r})}+t^{1-d/p}\|F\|_{L^p(D_{5r})} \\  & \qquad+\ln[t^{-1}r+2]\|h\|_{L^\infty(D_{5r})}+t^{\eta}[h]_{C^{0,\eta}(D_{5r})}\Big\}.
\end{aligned}
\end{equation}

Finally, combining (\ref{ineq.ue1Lip}) with (\ref{ineq.pe2.osc.complete}), we have proved the adjustable oscillation estimate for $p_\varepsilon$,
\begin{equation*}
\begin{aligned}
\osc{D_{r}}{p_\varepsilon} & \le C \bigg\{ \frac{1}{r} \average_{D_{5r} } |u_\e| +\ln[t^{-1}r+2]\|\mathcal{M}_{D_{5r},t}(F\delta)\|_{L^\infty(D_{5r})} \\  
	&\quad +t\bigg(\average_{D_{5r}}|F|^p\bigg)^{1/p}+\ln[t^{-1}r+2]\|h\|_{L^\infty(D_{5r})}+t^{\eta}[h]_{C^{0,\eta}(D_{5r})}\bigg\}.
\end{aligned}
\end{equation*}
The proof is now complete.
\end{proof}

\begin{remark}\label{rmk.recover}
	We claim that Theorem \ref{thm.modified-boundary-estimate} recovers Theorem \ref{thm.boundary-estimate} if we set $t = r$. In fact, we only need to estimate $\|\mathcal{M}_{D_{5r},r}(F\delta)\|_{L^\infty(D_{5r})}$. Note that $\delta(x) = \text{dist}(x,\partial D_{5r}) \le Cr$. Then for any $x\in D_{5r}$,
	\begin{equation*}
	\mathcal{M}_{D_{5r},r}(F\delta)(x) \le C r \sup_{s>r}\average_{B(x,r)\cap D_{5r}} |F| \le C r \bigg(\average_{D_{5r}}|F|^p\bigg)^{1/p}.
	\end{equation*}
	The claim then follows readily. Therefore, Theorem \ref{thm.modified-boundary-estimate} can be viewed as an improved version of Theorem \ref{thm.boundary-estimate} with an adjustable parameter $t$.
\end{remark}

As we have mentioned in the Introduction, similar adjustable estimate may be obtained for non-trivial boundary data $f$. To demonstrate this, we will show, of independent interest, an analog for elliptic system with non-trivial boundary data.

\begin{theorem}\label{thm.elliptic.adj}
	Assume $\Omega$ is a bounded $C^{1,\eta}$ domain and $R_0 = \rtxt{diam}(\Omega)$. Let $h\in C^{0,\eta}(\Omega;\R^{d\times d})$, $ f\in C^{1,\eta}(\partial\Omega;\R^d)$ and $u_\e$ be the weak solution of
	\begin{equation}\label{eq.elliptic}
		\left\{
		\begin{aligned}
			\mathcal{L}_\varepsilon(u_\varepsilon) &= \rtxt{div}(h) &\qquad \rtxt{ in }\Omega,\\
			u_\varepsilon& =f &\qquad\rtxt{ on }\partial\Omega.
		\end{aligned}
		\right.	
	\end{equation}
	Then, for any $0<t\le R_0$,
	\begin{equation*}
		\norm{\nabla u_\e}_{L^\infty(\Omega)} \le C\ln[t^{-1}R_0+2] \big(\norm{h}_{L^\infty(\Omega)} + \norm{\nabla f}_{L^\infty(\Omega)} \big) + Ct^\eta \big([h]_{C^{0,\eta}(\Omega)} + [\nabla f]_{C^{0,\eta}(\Omega)} \big),
	\end{equation*}
	where $C$ depends only on $d,\eta, A$ and $\Omega$.
\end{theorem}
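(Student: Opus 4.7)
I will split $u_\e = u_\e^{(1)}+u_\e^{(2)}$, where $u_\e^{(1)}$ solves $\cL_\e u_\e^{(1)}=0$ in $\Omega$ with $u_\e^{(1)}=f$ on $\partial\Omega$, and $u_\e^{(2)}$ solves $\cL_\e u_\e^{(2)}=\rdiv(h)$ with $u_\e^{(2)}=0$ on $\partial\Omega$. Each piece will be shown to satisfy an adjustable bound of the claimed form, and the theorem will follow by summation.

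The $u_\e^{(2)}$ piece will be handled by mimicking Part~(i) of the proof of Theorem~\ref{thm.modified-boundary-estimate}; the elliptic setting is in fact simpler because there is no pressure and no divergence constraint. Let $G_\e$ denote the elliptic Green's function of $\cL_\e$ on $\Omega$; its pointwise bounds $|\nabla_y G_\e(x,y)|\le C|x-y|^{1-d}$ and $|\nabla_x\nabla_y G_\e(x,y)|\le C|x-y|^{-d}$ are the classical elliptic counterparts of Theorem~\ref{thm.green-function.pointwise}, available from \cite{AL8701}. Representing $u_\e^{(2)}(x)=-\int_\Omega \nabla_y G_\e(x,y)h(y)\,dy$ and using $G_\e(x,\cdot)|_{\partial\Omega}=0$ to get $\int_\Omega\nabla_y G_\e(x,y)\,dy=0$, I will rewrite this as $u_\e^{(2)}(x)=-\int_\Omega\nabla_y G_\e(x,y)[h(y)-h(x)]\,dy$. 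Differentiating under the integral and splitting at $|y-x|=t$ then immediately yields $|\nabla u_\e^{(2)}(x)|\le Ct^\eta[h]_{C^{0,\eta}(\Omega)}+C\ln[t^{-1}R_0+2]\|h\|_{L^\infty(\Omega)}$.

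The $u_\e^{(1)}$ piece is the real work and constitutes the main obstacle. The tempting shortcut of subtracting an extension $\tilde f$ of $f$ and reducing to the $h$-case fails, because the modified data $h+A(\cdot/\e)\nabla\tilde f$ carries a H\"older semi-norm with a factor $[A(\cdot/\e)]_{C^{0,\eta}}\le C\e^{-\eta}$, which is fatal when $t\gg\e$. Instead I will use the Poisson representation $u_\e^{(1)}(x)=\int_{\partial\Omega}\Upsilon_\e(x,y)f(y)\,d\sigma(y)$ with $\Upsilon_\e^{\alpha\gamma}(x,y)=-n_i(y)a^{\beta\gamma}_{ji}(y/\e)\partial_{y_j}G_\e^{\alpha\beta}(x,y)$, and subtract from $f$ an affine function whose contribution I absorb into the Dirichlet corrector. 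Fix $x\in\Omega$, let $y_0\in\partial\Omega$ be its nearest-point projection, and let $\tilde f$ be any $C^{1,\eta}$ extension of $f$ to $\R^d$. Define $L(y):=f(y_0)+\nabla\tilde f(y_0)\cdot(y-y_0)$; on $\partial\Omega$ one has $|f(y)-L(y)|\le C[\nabla f]_{C^{0,\eta}}|y-y_0|^{1+\eta}$ and also the cruder bound $|f(y)-L(y)|\le C\|\nabla f\|_{L^\infty}|y-y_0|$. Testing against constant and coordinate solutions of $\cL_\e$ gives the identities $\int_{\partial\Omega}\Upsilon_\e(x,y)\,d\sigma(y)=I$ and $\int_{\partial\Omega}\Upsilon_\e(x,y)y_k\,d\sigma(y)=\Phi_\e^k(x)$, where $\Phi_\e^k$ is the elliptic Dirichlet corrector; consequently
\[
\nabla u_\e^{(1)}(x)=\nabla\tilde f(y_0)\cdot\nabla\Phi_\e(x)+\int_{\partial\Omega}\nabla_x\Upsilon_\e(x,y)\,[f(y)-L(y)]\,d\sigma(y).
\]
The first summand is bounded by $C\|\nabla f\|_{L^\infty}$ via the classical uniform Lipschitz estimate $\|\nabla\Phi_\e\|_{L^\infty}\le C$ from \cite{AL8701}. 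For the second, I combine $|\nabla_x\Upsilon_\e(x,y)|\le C|x-y|^{-d}$ with the geometric inequality $|x-y|\ge c(\delta(x)+|y-y_0|)$ valid for $y\in\partial\Omega$, and split the surface integral at $|y-y_0|=t$: the $(1+\eta)$-order bound on $|f-L|$ controls the near piece by $Ct^\eta[\nabla f]_{C^{0,\eta}}$, while the Lipschitz bound controls the far piece by $C\ln[t^{-1}R_0+2]\|\nabla f\|_{L^\infty}$. Adding the estimates for $u_\e^{(1)}$ and $u_\e^{(2)}$ completes the proof; the key point is that the Dirichlet-corrector subtraction absorbs the macroscopic affine part of $f$ without ever forcing a comparison between the $\e$-scale oscillations of $A$ and the non-oscillatory regularity of $f$.
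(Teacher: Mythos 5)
Your proposal is correct and follows essentially the same route as the paper: represent the boundary contribution through the Poisson kernel, subtract a first-order affine approximation of (an extension of) $f$, use the reproducing identities for constant and linear boundary data to produce the Dirichlet-corrector term, which is bounded via the uniform Lipschitz estimate, and conclude with the kernel bound $|\nabla_x\Upsilon_\e(x,y)|\le C|x-y|^{-d}$ and a near/far split at scale $t$, while the interior term is treated exactly as in the paper via the $h(x)$-subtraction in the Green-function representation. The only cosmetic difference is that you center the affine approximation at the boundary projection $y_0$ of $x$, whereas the paper centers the Taylor polynomial of the extended $f$ at $x$ itself; this changes nothing essential.
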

\begin{proof}
	We temporarily let $G_\e^\dagger (x,y)$ and $P_\e^\dagger(x,y)$ be the Green's function and Poisson kernel of $\cL_\e$ in $\Omega$, respectively. By the representation formula, we have
	\begin{equation*}
		\nabla u_\e(x) = \int_{\Omega} \nabla_x G_\e^\dagger(x,y) \text{div}(h)(y) dy + \int_{\partial\Omega}\nabla_x P_\e^\dagger(x,y) f(y)d\sigma(y)
	\end{equation*}
	The first term above can be handled analogously as the second term of (\ref{eq.ue2.Fdivh}). To deal with the second term, we extend $f$ from $\partial\Omega$ to $\R^d$ with both $\norm{\nabla f}_{L^\infty}$ and $[\nabla f]_{C^{0,\eta}}$ being preserved, and denote the extended function still by $f$. Then, using the $C^{1,\eta}$ continuity, we have for any $x\in \Omega,y\in \partial\Omega$,
	\begin{equation*}
		|f(y) - f(x) - (y-x)\cdot \nabla f(x)| \le \min \big\{ |x-y|^{1+\eta} [\nabla f]_{C^{0,\eta}}, 2|x-y|\norm{f}_{L^\infty} \big\}.
	\end{equation*}
	Therefore, for any fixed $t>0$
	\begin{align}
		\begin{aligned}\label{ineq.Pe.plus}
			& \bigg| \int_{\partial\Omega}\nabla_x P_\e^\dagger (x,y) [f(y) - f(x) - (y-x)\cdot \nabla f(x)] d\sigma(y) \bigg| \\
			& \qquad \le C \int_{\partial\Omega \cap B(x,t)} |P_\e^\dagger(x,y)||x-y|^{1+\eta} [\nabla f]_{C^{0,\eta}} d\sigma(y) \\
			& \qquad \qquad + C \int_{\partial\Omega \setminus B(x,t)} |P_\e^\dagger(x,y)| |x-y|\norm{\nabla f}_{L^\infty} d\sigma(y) \\
			& \qquad \le C t^\eta [\nabla f]_{C^{0,\eta}} + C\ln[R_0/t + 2] \norm{f}_{L^\infty}.
		\end{aligned}
	\end{align}
	Now observe that $\int_{\partial\Omega} \nabla_x P_\e^\dagger(x,y) d\sigma(y) = 0$ and 
	\begin{equation*}
		\int_{\partial\Omega} \nabla_x P_\e^\dagger(x,y) y\cdot \nabla f(x) d\sigma(y) = \nabla \Phi_\e^+(x) \cdot \nabla f(x),
	\end{equation*}
	where $\Phi_\e^\dagger(x)$ is the Dirichlet corrector for elliptic operator $\cL_\e$ (i.e., $\cL_\e \Phi_\e^\dagger = 0$) subject to $\Phi_\e^\dagger(x) = x$ on $\partial\Omega$. Thus, (\ref{ineq.Pe.plus}) implies that
	\begin{equation}\label{ineq.ue-Phif}
		|\nabla u_\e(x) - \nabla \Phi_\e^\dagger(x)\cdot \nabla f(x)| \le C t^\eta [\nabla f]_{C^{0,\eta}} + C\ln[R_0/t + 2] \norm{\nabla f}_{L^\infty}.
	\end{equation}
	Finally, note that $|\nabla \Phi_\e^\dagger(x)| \le C$ (by Theorem \ref{thm.boundary-estimate}), where $C$ depends only on $A$ and $\Omega$. This, together with (\ref{ineq.ue-Phif}), leads to the desired estimate.
\end{proof}

\subsection{Proof of Theorem \ref{thm.main} (ii)}
We need some estimates for the Dirichlet correctors.
\begin{lemma}\label{lem.Dirichlet-corrector.estimate}
	Let $\Omega$ be a bounded $C^{1,\eta}$ domain and $(\Phi_{\e,j}^\beta, \Lambda_{\e,j}^\beta)$ be the matrix of Dirichlet correctors defined in (\ref{def.Dirichlet-corrector}). Assume $\Lambda_{\e,j}^\beta(x_0) = \pi_j^\beta(x_0/\e)$ for some fixed point $x_0\in \Omega$ with $\delta(x_0)>r_0>0$ ($r_0$ will be figured out in the proof). Then
	\begin{equation}\label{est.PhiLip}
	|\nabla \Phi_{\e,j}^\beta(x)| \le C,\qquad |\Phi_{\e,j}^\beta(x) - P_j^\beta(x)| \le C\e,
	\end{equation}
	and
	\begin{equation}\label{est.DPhi}
	|\nabla \big\{ \Phi_{\e,j}^\beta(x) - P_j^\beta(x) - \e \chi_j^\beta(x/\e) \big\}| + | \Lambda_{\e,j}^\beta(x)- \pi_j^\beta(x/\e) | \le C\min\bigg\{1, \frac{\e}{\delta(x)}\bigg\},
	\end{equation}
	where $\delta(x) = \rtxt{dist}(x,\partial\Omega)$ and $C$ depends only on $d,A$ and $\Omega$.
\end{lemma}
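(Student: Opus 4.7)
The plan is to work with the differences $w_\e := \Phi_{\e,j}^\beta - P_j^\beta - \e\chi_j^\beta(\cdot/\e)$ and $\tau_\e := \Lambda_{\e,j}^\beta - \pi_j^\beta(\cdot/\e)$, since subtracting the cell problem (\ref{def.corrector}) from the Dirichlet corrector equation (\ref{def.Dirichlet-corrector}) produces the homogeneous system
\begin{equation*}
\cL_\e(w_\e) + \nabla \tau_\e = 0,\qquad \text{div}(w_\e) = 0 \ \text{ in }\ \Omega,\qquad w_\e = -\e\chi_j^\beta(x/\e) \ \text{ on }\ \partial\Omega.
\end{equation*}
First I would observe that $\|w_\e\|_{L^\infty(\partial\Omega)} \le \e\|\chi\|_{L^\infty} \le C\e$, so the Miranda--Agmon maximum principle (Remark \ref{rmk.Max}) immediately yields $\|w_\e\|_{L^\infty(\Omega)} \le C\e$. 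This gives the second estimate $|\Phi_{\e,j}^\beta - P_j^\beta|\le C\e$ in (\ref{est.PhiLip}), and in particular $\|\Phi_{\e,j}^\beta\|_{L^\infty(\Omega)}\le C$.

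Next, to obtain the uniform Lipschitz bound $|\nabla \Phi_{\e,j}^\beta| \le C$, I would apply Theorem \ref{thm.boundary-estimate} directly to $(\Phi_{\e,j}^\beta, \Lambda_{\e,j}^\beta)$ with $F = 0$, $h = 0$, $g \equiv \delta_{j\beta}$ (so every norm of $g$ is controlled), and boundary data $f = P_j^\beta$ (a linear function of controlled $C^{1,\eta}$ norm). Choosing the radius $R$ comparable to $\mathrm{diam}(\Omega)$ and using the $L^\infty$ bound from the previous step produces $\|\nabla \Phi_{\e,j}^\beta\|_{L^\infty(\Omega)} \le C$ together with $\mathrm{osc}_\Omega[\Lambda_{\e,j}^\beta] \le C$. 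Consequently $|\nabla w_\e| \le C$ and $|\tau_\e| \le C$ throughout $\Omega$, which takes care of the trivial side of the minimum in (\ref{est.DPhi}).

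For the sharp $O(\e/\delta(x))$ side of (\ref{est.DPhi}), I would use the interior version of Theorem \ref{thm.boundary-estimate} on $B_{\delta(x)/2}(x) \subset \Omega$ applied to $(w_\e, \tau_\e)$, which there solves a fully homogeneous Stokes system. This gives
\begin{equation*}
|\nabla w_\e(x)| + \mathrm{osc}_{B_{\delta(x)/8}(x)}[\tau_\e] \le \frac{C}{\delta(x)}\|w_\e\|_{L^\infty(B_{\delta(x)/4}(x))} \le \frac{C\e}{\delta(x)},
\end{equation*}
so the gradient part of (\ref{est.DPhi}) is immediate. To pass from this local oscillation bound on $\tau_\e$ to the pointwise statement $|\tau_\e(x)| = |\tau_\e(x) - \tau_\e(x_0)|$, I would chain dyadic interior balls $B_i = B_{\delta(y_i)/16}(y_i)$ with $y_0 = x$, $y_{i+1} \in B_i$, and $\delta(y_{i+1}) \ge 2\delta(y_i)$, terminating at the first $y_N$ with $\delta(y_N) \ge r_0/2$. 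Summing the oscillation bound $C\e/\delta(y_i)$ along this geometrically growing sequence keeps the total below $C\e/\delta(x)$, and finitely many balls of size $\sim r_0$ then connect $y_N$ to $x_0$ contributing a further $C\e/r_0$. Choosing $r_0$ to be a fixed fraction of $\mathrm{diam}(\Omega)$ (so every $x\in\Omega$ admits such a chain to $x_0$) closes the estimate, after which the combination with the global bound $|\tau_\e|\le C$ yields the $\min$ in (\ref{est.DPhi}).

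The main obstacle will be the last step: the oscillation estimate for the pressure only controls $\tau_\e$ modulo a constant on each ball, whereas the target estimate demands pointwise control pinned by the normalization $\tau_\e(x_0) = 0$. The delicate piece is the construction and telescoping of the dyadic chain, verifying that $\sum_i \e/\delta(y_i) \lesssim \e/\delta(x)$ is both finite and sharp; this is exactly what fixes the choice of $r_0$.
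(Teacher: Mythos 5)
Your overall strategy is exactly the paper's: the second bound in (\ref{est.PhiLip}) via the Miranda--Agmon maximum principle applied to $w_\e=\Phi_{\e,j}^\beta-P_j^\beta-\e\chi_j^\beta(\cdot/\e)$, the Lipschitz bound from Theorem \ref{thm.boundary-estimate}, the interior estimate on $B_{\delta(x)/2}(x)$ giving $|\nabla w_\e(x)|+\osc{B_{\delta(x)/8}(x)}{\tau_\e}\le C\e/\delta(x)$, and then a Harnack-type chain, pinned by the normalization $\tau_\e(x_0)=0$, to convert the local oscillation bound into the pointwise bound on $\tau_\e$. Your handling of the trivial side of the minimum (global oscillation bound for $\Lambda_{\e,j}^\beta$ plus boundedness of $\pi$) is a reasonable way to make explicit what the paper leaves implicit.

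There are, however, two concrete defects in the chain step as you wrote it. First, the conditions ``$y_{i+1}\in B_i=B_{\delta(y_i)/16}(y_i)$'' and ``$\delta(y_{i+1})\ge 2\delta(y_i)$'' are incompatible: $\delta$ is $1$-Lipschitz, so moving a distance at most $\delta(y_i)/16$ can increase $\delta$ by at most $\delta(y_i)/16$, i.e.\ by a factor $17/16$, never by a factor $2$. The argument survives if you either take growth factor $1+c$ with $c$ tied to the step size (the geometric sum $\sum_i \e/\delta(y_i)\le C\e/\delta(x)$ still closes), or do what the paper does: work inside a non-tangential cone $\mathcal{C}(\hat x)$, cover each dyadic annulus $E_k=\mathcal{C}(\hat x)\cap B_{2^k\delta(x)}(\hat x)\setminus B_{2^{k-1}\delta(x)}(\hat x)$ by at most $N$ balls of diameter $\gtrsim 2^k\delta(x)$ with $8B_{ki}\subset\Omega$, and sum $\sum_k CN\,\e/(2^k\delta(x))$. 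Second, choosing $r_0$ as ``a fixed fraction of $\mathrm{diam}(\Omega)$'' is not legitimate for a general bounded $C^{1,\eta}$ domain: the set $\{\delta(x)>c\,\mathrm{diam}(\Omega)\}$ may be empty or disconnected (think of a long thin or dumbbell-shaped domain). In the paper, $r_0$ (together with $r_1$, $M$, $N$) is fixed by the geometry of $\Omega$ so that $\{\delta>r_0\}$ is connected, every non-tangential cone meets it, and any two of its points are joined by at most $M$ balls of radius $\ge r_1$ compactly contained in $\Omega$; with that choice, your telescoping and the final comparison with $\min\{1,\e/\delta(x)\}$ go through as intended.
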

\begin{proof}
	The first part of (\ref{est.PhiLip}) directly follows from the Lipschitz estimate in Theorem \ref{thm.boundary-estimate}. To show the second part of (\ref{est.PhiLip}), we consider $u_\e(x) = \Phi_{\e,j}^\beta(x) - P_j^\beta(x) - \e \chi_j^\beta(x/\e)$ and $p_\e(x)  = \Lambda_{\e,j}^\beta(x) - \pi_j^\beta(x/\e)$ and observe that $(u_\e,p_\e)$ satisfies
	\begin{equation*}
	\left\{
	\begin{aligned}
	\cL_\e(u_\e)+\nabla p_\e &= 0 &\qquad \text{ in }\Omega,\\
	\tdiv(u_\e) &=0 &\qquad\text{ in }\Omega,\\
	u_\e & = -\e\chi_j^\beta(x/\e) &\qquad\text{ on }\partial\Omega.
	\end{aligned}
	\right.	
	\end{equation*}
	By the Miranda-Agmon maximum principle (Remark \ref{rmk.Max}), one obtains $\norm{u_\e}_{L^\infty(\Omega)} \le C\e$, which implies the second part of (\ref{est.PhiLip}). 
	
	To see (\ref{est.DPhi}), we use the interior estimate in Theorem \ref{thm.boundary-estimate},
	\begin{equation}\label{est.Gam-Pi}
	|\nabla u_\e(x)| + \osc{B_{\delta(x)/8}(x)}{p_\e} \le \frac{C}{\delta(x)} \bigg(\average_{B(x,\delta(x)/2)} |u_\e|^2 \bigg)^{1/2} \le \frac{C\e}{\delta(x)}, 
	\end{equation}
	where we have used (\ref{est.PhiLip}) in the last step. The estimate for $u_\e$ in (\ref{est.Gam-Pi}) gives the first part of (\ref{est.DPhi}). Finally, the estimate for $p_\e$ follows by a method of Harnack chain. We describe the argument in detail as follows.
	
	First, since $\Omega$ is a bounded $C^{1,\eta}$ domain, the following conditions are satisfied: (i) There exists $r_0>0$ such that $\{x\in \Omega: \delta(x)>r_0\}$ is connected and for any $\hat{x} \in \partial\Omega$, $\mathcal{C}(\hat{x}) \cap \{x\in \Omega: \delta(x)>r_0\} \neq \emptyset$. (ii) For any $x_1,x_2\in \{x\in \Omega: \delta(x)>r_0\}$, there exists at most $M$ balls $\{B_i: i=1,2,\cdots,M\}$ such that $x_1$ is connected to $x_2$ by these balls. Moreover, for each $i$, $8B_i\subset \Omega$ and $\text{diam}(B_i) > r_1$ for some fixed $r_1>0$. (iii) The above parameters $r_0,r_1$ and $M$ depend only on the geometry of $\Omega$.
	
	Now let $x_0\in \Omega$ be a point such that $\delta(x_0) > r_0$ and $p_\e(x_0) = 0$. For any $x\in \Omega$, we construct a chain of balls connecting $x$ with $x_0$ in two separated cases.
	
	Case 1: $\delta(x) \ge r_0$. Based on condition (ii), there are at most $M$ balls $\{B_i: i=1,2,\cdots,M\}$ with minimum radius $r_1$ connecting $x$ to $x_0$. Therefore,
	\begin{equation*}
	|p_\e(x)| = |p_\e(x) - p_\e(x_0)| \le \sum_{i = 1}^M \osc{B_i}{p_\e} \le \frac{CM\e}{8r_1} \le \bigg(\frac{CMr_0}{8r_1 }\bigg) \frac{\e}{\delta(x)},
	\end{equation*}
	where we have used (\ref{est.Gam-Pi}) in the second inequality.
	
	Case 2: $\delta(x) < r_0$. Let $\hat{x}$ be a point on $\partial\Omega$ such that $x\in \mathcal{C}(\hat{x})$. By the condition (i), there exists some $\widetilde{x} \in \mathcal{C}(\hat{x})$ satisfying $\delta(\widetilde{x}) > r_0$. Then we will construct a family of balls connecting $x$ to $\widetilde{x}$. To do this, we consider the set $E_k = \mathcal{C}(\hat{x}) \cap B_{2^k\delta(x)}(\hat{x})\setminus B_{2^{k-1}\delta(x)}(\hat{x})$. Observe that by the definition of the non-tangential cone, for each $k$ such that $E_k \neq \emptyset$, there exist at most $N$ balls, denoted by $\{B_{ki}: i=1,2,\cdots, N\}$, such that $E_k \subset \bigcup\limits_i B_{ki}$ and $\text{diam}(B_{ki}) > c_0 2^k\delta(x) $ and $8B_{ki} \subset \Omega$ for each $B_{ki}$. Now since $\{x,\widetilde{x}\} \subset \mathcal{C}(\hat{x}) \setminus B_{\delta(x)}(\hat{x}) \subset \bigcup\limits_{i,k} B_{ki}$,
	\begin{equation*}
	|p_\e(x) - p_\e(\widetilde{x})| \le \sum_{k,i} \osc{B_{ki}}{p_\e} \le \sum_{k\ge 1} CN \frac{\e}{c_02^k \delta(x)} \le \bigg(\frac{CN}{c_0}\bigg) \frac{\e}{\delta(x)}.
	\end{equation*}
	Finally, since $\delta(\widetilde{x}) \ge r_0$, we apply the result of Case 1 and conclude
	\begin{equation*}
	\begin{aligned}
	|p_\e(x)| &\le |p_\e(x) - p_\e(\widetilde{x})| + |p_\e(\widetilde{x})|  \\
	&\le \bigg(\frac{CN}{c_0}\bigg) \frac{\e}{\delta(x)} + \bigg(\frac{CM r_0}{8r_1}\bigg) \frac{\e}{\delta(\widetilde{x})} \\
	&\le  \bigg(\frac{CN}{c_0} + \frac{CM r_0}{8r_1}\bigg) \frac{\e}{\delta(x)}.
	\end{aligned}
	\end{equation*}
	This proves the second part of (\ref{est.DPhi}).
\end{proof}

Note that $\Lambda_\e$ is uniquely specified in Lemma \ref{lem.Dirichlet-corrector.estimate} by $\Lambda_{\e,j}^\beta(x_0) = \pi_j^\beta(x_0/\e)$ for some interior point $x_0\in \Omega$ with $\delta(x_0)>r_0>0$. It turns out this condition is necessary for deriving the correct asymptotic expansions of $\nabla_x G_\e$ and $\Pi_\e$. Throughout this section, we will always assume that $\Lambda_\e$ is uniquely given by Lemma \ref{lem.Dirichlet-corrector.estimate}.

\begin{proof}[Proof of Theorem \ref{thm.main}, Part (ii)]
Fix $x_0,\, y_0\in \Omega$ and $r=|x_0-y_0|/8$. For any fixed $1\le \gamma\le d$, we let $(u_\varepsilon(x),p_\varepsilon(x))=(G^\gamma_\varepsilon(x,y_0),\Pi^\gamma_\varepsilon(x,y_0))$ and $(u_0(x),p_0(x))=(G^\gamma_0(x,y_0),\Pi^\gamma_0(x,y_0))$, and define
\begin{equation}\label{def.w.Green}
\begin{aligned}
w_\varepsilon=u_\varepsilon-u_0-\big\{\Phi_{\varepsilon,j}^\beta-P_j^\beta\big\}\cdot \frac{\partial u_0^\beta}{\partial x_j},\quad\tau_\varepsilon=p_\varepsilon-p_0-\Lambda_{\varepsilon,j}^\beta\frac{\partial u_0^\beta}{\partial x_j}-\varepsilon q_{ij}^\beta(x/\varepsilon)\frac{\partial^2 u_0^\beta}{\partial x_i\partial x_j}.
\end{aligned}
\end{equation}
In order to apply Theorem \ref{thm.modified-boundary-estimate}, we construct a $C^{2,\eta}$ domain $\widetilde{D}$ such that $D_{5r}(x_0)\subset \widetilde{D}\subset D_{6r}(x_0)$. By Lemma \ref{lem.w-system}, $(w_\varepsilon,\tau_\varepsilon)$ satisfies the following system
\begin{equation*}
\left\{
\begin{aligned}
\cL_\e(w_\e)+\nabla \tau_\e &= F_\varepsilon+\rdiv{(h_\varepsilon)} &\qquad \text{ in }\widetilde{D},\\
\rdiv(w_\e) &=g_\varepsilon &\qquad\text{ in }\widetilde{D},\\
w_\e& =0 &\qquad\text{ on }\widetilde{\Delta},
\end{aligned}
\right.	
\end{equation*}
where $(F_\varepsilon, h_\varepsilon, g_\varepsilon)$ are given by
\begin{equation}\label{def.Fgh.e}
\left\{
\begin{aligned}
F_\varepsilon^\alpha&=a_{ik}^{\alpha\gamma}(x/\varepsilon)\frac{\partial}{\partial x_k}\Big[\Phi_{\varepsilon,j}^{\gamma\beta}-P_j^{\gamma\beta}-\varepsilon\chi_j^{\gamma\beta}(x/\varepsilon)\Big]\frac{\partial^2 u_0^\beta}{\partial x_i \partial x_j}+\Big[\pi_j^\beta(x/\varepsilon)-\Lambda_{\varepsilon,j}^\beta\Big]\frac{\partial^2 u_0^\beta}{\partial x_\alpha\partial x_j},\\
h_\varepsilon^{\alpha i}&= -\Big[\varepsilon \phi_{kij}^{\alpha\beta}(x/\varepsilon)-a_{ik}^{\alpha\gamma}(x/\varepsilon)\big(\Phi_{\varepsilon,j}^{\gamma\beta}-P_j^{\gamma\beta}\big)\Big]\frac{\partial^2 u_0^\beta}{\partial x_k\partial x_j}-\varepsilon q_{ij}^\beta(x/\varepsilon)\frac{\partial^2 u_0^\beta}{\partial x_\alpha\partial x_j},\\
g_\varepsilon&=-\big\{\Phi_{\varepsilon,j}^{\alpha\beta}-P_j^{\alpha\beta}\big\}\frac{\partial^2 u_0^\beta}{\partial x_\alpha \partial x_j}.
\end{aligned}
\right.
\end{equation}
Using Lemma \ref{lem.Dirichlet-corrector.estimate}, we have
\begin{equation}\label{ineq.Fhg}
\left\{
\begin{aligned}
&\norm{F_\e\delta }_{L^\infty(D_{6r})}+ \|h_\varepsilon\|_{L^\infty(D_{6r})}+\|g_\varepsilon\|_{L^\infty(D_{6r})} \le C\varepsilon\|\nabla^2 u_0\|_{L^\infty(D_{6r})}, \\
&[h_\varepsilon]_{C^{0,\eta}(D_{6r})}+[g_\varepsilon]_{C^{0,\eta}(D_{6r})}\le C\varepsilon^{1-\eta} [\nabla^2 u_0]_{C^{0,\eta}(D_{6r})} + C \varepsilon\|\nabla^2 u_0\|_{L^\infty(D_{6r})}.
\end{aligned}
\right.
\end{equation}

For the given $g_\varepsilon \in C^{0,\eta}(\widetilde{D})$ above, by Theorem \ref{thm.divergence.modifiedC1}, there exists a $W\in C^{1,\eta}(\widetilde{D};\mathbb{R}^d)$ such that $\text{div}(W)=g_\varepsilon$ in $\widetilde{D}$, with $W=0$ on $\partial\widetilde{D}$, and
\begin{equation}\label{ineq.W}
\left\{
\begin{aligned}
\|\nabla W\|_{L^\infty(\widetilde{D})}&\le C\ln[t^{-1}r+2]\|g_\varepsilon\|_{L^\infty(\widetilde{D})}+Ct^\eta[g_\varepsilon]_{C^{0,\eta}(\widetilde{D})},\\
[\nabla W]_{C^{0,\eta}(\widetilde{D})}& \le Cr^{-\eta}\|g_\varepsilon\|_{L^\infty(\widetilde{D})}+C[g_\varepsilon]_{C^{0,\eta}(\widetilde{D})}.
\end{aligned}
\right.
\end{equation}
Observe that $(w_\varepsilon- W,\tau_\varepsilon)$ satisfies
\begin{equation*}
\left\{
\begin{aligned}
\cL_\e(w_\varepsilon-W)+\nabla \tau_\varepsilon &= F_\varepsilon+ \text{div}(h_\varepsilon+A^\varepsilon\nabla W) & \text{ in }\widetilde{D},\\
\rdiv(w_\varepsilon- W) &=0 &\text{ in }\widetilde{D},\\
w_\varepsilon- W& =0 &\text{ on }\widetilde{\Delta}.\\
\end{aligned}
\right.	
\end{equation*}
Now we can apply Theorem \ref{thm.modified-boundary-estimate} to the above system to obtain
\begin{equation*}
\begin{aligned}
&\norm{\nabla (w_\varepsilon- W)}_{L^\infty(D_r)} +\osc{D_{r}}{\tau_\varepsilon}\\
& \le C \bigg\{ \frac{1}{r} \average_{D_{5r} } |w_\varepsilon-W| +\ln[\e^{-1}r+2]\|\mathcal{M}_{D_{5r},\e}(F_\varepsilon\delta)\|_{L^\infty(D_{5r})} +\e \bigg(\average_{D_{5r}}|F_\varepsilon|^p\bigg)^{1/p}\\  
&\qquad +\ln[\e^{-1}r+2]\|h_\varepsilon+A^\varepsilon\nabla W\|_{L^\infty(D_{5r})}+\e^{\eta}[h_\varepsilon+A^\varepsilon\nabla W]_{C^{0,\eta}(D_{5r})}\bigg\},
\end{aligned}
\end{equation*}
where we have assigned $t=\varepsilon$ for our situation. By (\ref{ineq.Fhg}) and Lemma \ref{lem.Dirichlet-corrector.estimate}, it is not hard to see that $\|\mathcal{M}_{D_{5r},\varepsilon}(F_\varepsilon\delta)\|_{L^\infty(D_{5r})}\le C\varepsilon\|\nabla^2 u_0\|_{L^\infty(D_{6r})}$ and $\|F_\varepsilon\|_{L^\infty(D_{5r})}\le C\|\nabla^2 u_0\|_{L^\infty(D_{5r})}$. It follows that
\begin{equation}\label{ineq.expansion.dGe.temp}
\begin{aligned}
&\norm{\nabla w_\varepsilon}_{L^\infty(D_r)} +\osc{D_{r}}{\tau_\varepsilon}\\
&\quad \le C \bigg\{ \frac{1}{r} \average_{D_{5r} } |w_\varepsilon| +\varepsilon\ln[\varepsilon^{-1}r+2]\|\nabla^2 u_0\|_{L^\infty(D_{5r})}+\varepsilon^{1+\eta}[\nabla^2 u_0]_{C^{0,\eta}(D_{5r})}\\  
&\quad\qquad +\|\nabla W\|_{L^\infty(\widetilde{D})}+\ln[\varepsilon^{-1}r+2]\|A^\varepsilon\nabla W\|_{L^\infty(\widetilde{D})}+\varepsilon^{\eta}[A^\varepsilon\nabla W]_{C^{0,\eta}(\widetilde{D})}\bigg\},\\
&\quad \le C \bigg\{ \frac{1}{r} \average_{D_{5r} } |u_\varepsilon-u_0| +\varepsilon\big(\ln[\varepsilon^{-1}r+2]\big)^2\|\nabla^2 u_0\|_{L^\infty(D_{6r})}+\varepsilon^{1+\eta}[\nabla^2 u_0]_{C^{0,\eta}(D_{6r})}\bigg\},\\
\end{aligned}
\end{equation}
where we have used (\ref{ineq.Fhg}), (\ref{ineq.W}) and the following observation
\begin{equation*}
\left\{
\begin{aligned}
\|A^\varepsilon \nabla W\|_{L^\infty(\widetilde{D})}&\le C\norm{\nabla W}_{L^\infty(\widetilde{D})},\\
[A^\varepsilon \nabla W]_{C^{0,\eta}(\widetilde{D})}&\le C\varepsilon^{-\eta} \norm{\nabla W}_{L^\infty(\widetilde{D})} + C[\nabla W]_{C^{0,\eta}(\widetilde{D})}.
\end{aligned}
\right.
\end{equation*}

Finally, by the already proved estimate (\ref{est.mainG}), we have $\|u_\varepsilon-u_0\|_{L^\infty(D_{6r})}\le C\varepsilon  r^{1-d}$. Also, the $C^{2,\eta}$ estimate for $G_0$ gives $\|\nabla^2 u_0\|_{_{L^\infty(D_{6r})}}\le Cr^{-d}$, and $[\nabla^2 u_0]_{C^{0,\eta}(D_{6r})}\le Cr^{-d-\eta}$. Thus, (\ref{ineq.expansion.dGe.temp}) implies that
\begin{equation}\label{ineq.expansion.dGe.temp2}
\begin{aligned}
&\Big\| \frac{\partial u_\varepsilon^\alpha}{\partial x_i}-\frac{\partial}{\partial x_i}\{\Phi_{\varepsilon,j}^{\alpha\beta}\}\cdot\frac{\partial u_0^\beta}{\partial x_j}\Big\|_{L^\infty(D_r)} +\osc{D_{r}}{p_\varepsilon-p_0-\Lambda_{\varepsilon,j}^\beta\frac{\partial u_0^\beta}{\partial x_j}}
\le C\varepsilon r^{-d}\big(\ln[\varepsilon^{-1}r+2]\big)^2.
\end{aligned}
\end{equation}
Note that (\ref{est.mainDG}) is an immediate consequence of (\ref{ineq.expansion.dGe.temp2}), while (\ref{est.mainPi}) follows from the same argument as in the proof of Theorem \ref{thm.green-function.pointwise}, by a covering argument.
\end{proof}

\begin{remark}
Let $(G^*_\varepsilon(x,y),\Pi^*_\varepsilon(x,y))$ be the Green's functions for the adjoint problem of (\ref{def.Dirichlet.Stokes}). By Theorem \ref{thm.main}, and the fact that $G^*_\varepsilon(x,y)=G_\varepsilon(y,x)^T$, we have the following asymptotic expansion,
\begin{equation}\label{ineq.expansion.dGe*}
\begin{aligned}
\left|\frac{\partial}{\partial y_i}\{G_\varepsilon^{\beta\alpha}(x,y)\}-\frac{\partial}{\partial y_i}\{\Phi_{\varepsilon,j}^{*\alpha\gamma}(y)\}\cdot\frac{\partial}{\partial y_j}\{G_0^{\beta\gamma}(x,y)\}\right|\le \frac{C\varepsilon(\ln[\varepsilon^{-1}|x-y|+2])^2}{|x-y|^d},
\end{aligned}
\end{equation}
and
\begin{equation}\label{ineq.expansion.Pie*}
\begin{aligned}
&\bigg|\Big[\Pi^{*\beta}_\varepsilon(x,y)-\Pi^{*\beta}_0(x,y)-\Lambda_{\varepsilon,j}^{*\gamma}(x)\frac{\partial}{\partial x_j}\{G^{\beta\gamma}_0(y,x)\}\Big]\\
&\qquad\qquad\qquad-\Big[\Pi^{*\beta}_\varepsilon(z,y)-\Pi^{*\beta}_0(z,y)-\Lambda_{\varepsilon,j}^{*\gamma}(z)\frac{\partial}{\partial z_j}\{G^{\beta\gamma}_0(y,z)\}\Big]\bigg|\\
&\qquad\le \frac{C\varepsilon(\ln[\varepsilon^{-1}|x-y|+2])^2}{|x-y|^d}+\frac{C\varepsilon(\ln[\varepsilon^{-1}|z-y|+2])^2}{|z-y|^d},
\end{aligned}
\end{equation}
for any $x,\,y\in \Omega$ and $x\neq y$, where $C$ depends only on $d$, $\eta$, $A$ and $\Omega$.
\end{remark}

\begin{corollary}\label{coro.PiExp}
Let $\Omega$ be a bounded $C^{2,\eta}$ domain and $A$ satisfy (\ref{cond.ellipticity}), (\ref{cond.periodicity}) and (\ref{cond.holder}). Then
\begin{equation*}
\begin{aligned}
\bigg|\Pi_\varepsilon(x,y)-\Pi_0(x,y)-\Big[\Lambda_{\varepsilon,j}^\beta(x)\frac{\partial}{\partial x_j}\{G^{\beta}_0(x,y)\}&-\average_{\Omega}\Lambda_{\varepsilon,j}^\beta(z)\frac{\partial}{\partial z_j}\{G^{\beta}_0(z,y)\} dz\Big]\bigg|\\
&\le \frac{C\varepsilon(\ln[\varepsilon^{-1}|x-y|+2])^3}{|x-y|^d},
\end{aligned}
\end{equation*}
for any $x,\,y\in \Omega$ and $x\neq y$, where $C$ depends only on $d$, $\eta$, $A$ and $\Omega$.
\end{corollary}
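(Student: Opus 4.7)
The strategy is to recast the left-hand side as a mean over $z\in\Omega$ of the quantity appearing in (\ref{est.mainPi}) and then integrate that estimate, handling the near-singular region $z\approx y$ by direct pointwise bounds. Using the normalizations $\int_\Omega\Pi_\e(z,y)\,dz=0$ and $\int_\Omega\Pi_0(z,y)\,dz=0$ built into Definition \ref{def.GreenFunc}, one obtains the identity
\[
\Pi_\e(x,y)-\Pi_0(x,y)-\Big[\Lambda_{\e,j}^\beta(x)\tfrac{\partial}{\partial x_j}\{G_0^\beta(x,y)\}-\average_\Omega \Lambda_{\e,j}^\beta(z)\tfrac{\partial}{\partial z_j}\{G_0^\beta(z,y)\}\,dz\Big]=\average_\Omega \mathcal{E}(x,z,y)\,dz,
\]
where $\mathcal{E}(x,z,y)$ is precisely the expression whose modulus is bounded on the LHS of (\ref{est.mainPi}). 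It thus suffices to control $\average_\Omega|\mathcal{E}|\,dz$.

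Setting $r:=|x-y|$ and focusing on the nontrivial regime $r\ge \e$, I would split $\Omega=\Omega_1\cup\Omega_2$ at $|z-y|=\e$. On $\Omega_2=\{|z-y|\ge \e\}\cap\Omega$, the estimate (\ref{est.mainPi}) applies directly and gives
\[
\int_{\Omega_2}|\mathcal{E}|\,dz\le |\Omega|\cdot\frac{C\e(\ln[\e^{-1}r+2])^2}{r^d}+C\e\int_\e^{R_0}\frac{(\ln[\e^{-1}\rho+2])^2}{\rho}\,d\rho;
\]
the remaining radial integral equals $O\big((\ln[\e^{-1}R_0+2])^3\big)$ via the substitution $u=\ln[\e^{-1}\rho+2]$, which is precisely the source of the cubic log in the final bound. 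On $\Omega_1=\{|z-y|<\e\}\cap\Omega$ the bound (\ref{est.mainPi}) has a non-integrable RHS and is useless, so I instead bound each summand of $\mathcal{E}$ pointwise: by Theorem \ref{thm.green-function.pointwise}, Remark \ref{rmk.Pi.Bdry}, and the corresponding estimates for $G_0,\Pi_0$, one has $|\Pi_\e(z,y)|,\,|\Pi_0(z,y)|,\,|\nabla_z G_0(z,y)|\le C|z-y|^{1-d}$, while the H\"older hypothesis on $A$ forces $\pi_j^\beta\in C^{0,\lambda}(Y)\subset L^\infty$, so Lemma \ref{lem.Dirichlet-corrector.estimate} yields $|\Lambda_{\e,j}^\beta(z)|\le C$. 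The resulting bound $|\mathcal{E}|\le C(r^{1-d}+|z-y|^{1-d})$ on $\Omega_1$ integrates to only $O(\e)$ (with an $O(\e^d/r^{d-1})$ contribution from the $x$-terms, also $O(\e)$ for $r\ge\e$), hence contributes $O(\e/|\Omega|)$ after averaging.

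All contributions are then to be absorbed into $C\e(\ln[\e^{-1}r+2])^3/r^d$. The leading $\log^2/r^d$ term is immediate since $\ln[\e^{-1}r+2]\ge \ln 3$; the constants $O(\e/|\Omega|)$ and $O(\e(\ln[\e^{-1}R_0+2])^3/|\Omega|)$ are absorbed using the monotonicity of $r\mapsto(\ln[\e^{-1}r+2])^3/r^d$, which a direct differentiation shows is decreasing on $[\e,R_0]$, so its minimum there is attained at $r=R_0$, yielding the desired domination up to the geometric constant $R_0^d/|\Omega|$. The residual case $r<\e$ is handled trivially: each individual term of the LHS is controlled by $C/r^{d-1}\le C\e/r^d$, which already matches the claim. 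The principal obstacle throughout is the near-singular regime $|z-y|\lesssim \e$, where (\ref{est.mainPi}) degenerates and must be replaced by direct pointwise estimates; in particular, the global boundedness of $\Lambda_{\e,j}^\beta$ (needed to make the averaged corrector term classically integrable) is the one place where the full H\"older regularity of $A$ is essential.
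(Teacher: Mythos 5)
Your proposal is correct and is essentially the paper's argument: the paper's proof of Corollary \ref{coro.PiExp} is exactly to average (\ref{est.mainPi}) in $z$ using the normalizations $\int_\Omega\Pi_\e(\cdot,y)=\int_\Omega\Pi_0(\cdot,y)=0$ and to control the region $|z-y|\lesssim\e$ by the pointwise bounds of Theorem \ref{thm.green-function.pointwise} (and Remark \ref{rmk.Pi.Bdry}), with the cubic logarithm arising from the radial integral of $(\ln[\e^{-1}\rho+2])^2/\rho$ just as you computed. Your write-up merely supplies the details (the splitting at $|z-y|=\e$, the boundedness of $\Lambda_{\e,j}^\beta$, the monotonicity absorption, and the trivial case $|x-y|<\e$) that the paper leaves implicit.
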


\begin{proof}
This follows from (\ref{est.mainPi}) and the assumption $\int_{\Omega} \Pi_\e(x,y)\,dx = 0$, as well as Theorem \ref{thm.green-function.pointwise}.
\end{proof}

\begin{theorem}\label{thm.convergence-rates.W1p}
Let $\Omega$ be a bounded $C^{2,\eta}$ domain with $R_0 = \rtxt{diam}(\Omega)$ and $A$ satisfy (\ref{cond.ellipticity}), (\ref{cond.periodicity}) and (\ref{cond.holder}). Let $1<p<\infty$, $F\in L^p(\Omega;\mathbb{R}^d)$ and $(u_\varepsilon,p_\varepsilon)\in W^{1,p}_0(\Omega;\mathbb{R}^d)\times L^p_0(\Omega)$ be the weak solution of 
\begin{equation*}
\left\{
\begin{aligned}
\mathcal{L}_\varepsilon(u_\varepsilon)+\nabla p_\varepsilon &=F &\quad\text{ in }\Omega,\\
\text{\rm div}(u_\varepsilon) & =0 &\quad\text{ in }\Omega,\\
u_\varepsilon &=0 &\quad\text{ on }\partial\Omega.
\end{aligned}
\right.
\end{equation*}
Then 
\begin{equation}\label{ineq.convergence-rates.W1p}
\begin{aligned}
\Big\|u_\varepsilon-u_0-\big\{\Phi_{\varepsilon,j}^\beta-P_j^\beta\big\}\cdot\frac{\partial u_0^\beta}{\partial x_j}\Big\|_{W^{1,p}_0(\Omega)}&+\Big\|p_\varepsilon-p_0-\Lambda_{\varepsilon,j}^\beta\frac{\partial u_0^\beta}{\partial x_j}\Big\|_{L^p_0(\Omega)}\\
&\qquad\le C\varepsilon(\ln[\varepsilon^{-1}R_0+2])^{8|\frac{1}{2}-\frac{1}{p}|}\|F\|_{L^p(\Omega)},
\end{aligned}
\end{equation}
where $C$ depends only on $d,\eta,p, A$ and $\Omega$.
\end{theorem}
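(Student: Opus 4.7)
My plan has three stages: a sharp $L^2$ bound with no logarithmic loss, a uniform $L^p$ singular-integral bound with an $\e(\ln)^4$ factor valid for every $p$, and Riesz--Thorin interpolation between them. Applying (\ref{def.intRep.onlyF}) to both $(u_\e,p_\e)$ and $(u_0,p_0)$ and differentiating gives
\begin{equation*}
\partial_i w_\e^\alpha(x) = \int_\Omega \big[\partial_{x_i}G_\e^{\alpha\beta}(x,y)-\partial_i\Phi_{\e,j}^{\alpha\gamma}(x)\,\partial_{x_j}G_0^{\gamma\beta}(x,y)\big]F^\beta(y)\,dy - \big(\Phi_{\e,j}^{\alpha\gamma}-P_j^{\alpha\gamma}\big)(x)\,\partial_i\partial_j u_0^\gamma(x),
\end{equation*}
together with an analogous expression for $\sigma_\e:=p_\e-p_0-\Lambda^\e\nabla u_0$, in which the $x$-independent averaged term from Corollary \ref{coro.PiExp} drops out under $\|\cdot\|_{L^p_0}$. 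Using Theorem \ref{thm.main}(ii), Corollary \ref{coro.PiExp}, and the raw bounds $|\nabla_x G_\e|,|\Pi_\e|\le C|x-y|^{1-d}$ from Theorem \ref{thm.green-function.pointwise}, the two integral kernels satisfy $|K_\e(x,y)|\le C\min\{|x-y|^{1-d},\,\e(\ln[\e^{-1}|x-y|+2])^j|x-y|^{-d}\}$ with $j=2$ for the velocity gradient and $j=3$ for the pressure.

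For the $L^2$ bound I apply Lemma \ref{lem.w-system} with $(V_\e,T_\e)=(\Phi_\e,\Lambda_\e)$ and set $\tilde\sigma_\e:=\sigma_\e-\e q^\e\nabla^2 u_0$. The two divergence-form terms on the right of (\ref{eq.w-system}) contribute $O(\e)\|\nabla^2 u_0\|_{L^2}$ in $H^{-1}$ by combining $\|\Phi_\e-P\|_{L^\infty}+\|\e\phi^\e\|_{L^\infty}+\|\e q^\e\|_{L^\infty}=O(\e)$ with the Calder\'on--Zygmund bound $\|\nabla^2 u_0\|_{L^2}\le C\|F\|_{L^2}$, while the two non-divergence residuals $a^\e\partial_k[\Phi_\e-P-\e\chi^\e]\partial^2 u_0$ and $[\pi^\e-\Lambda^\e]\partial^2 u_0$ have the pointwise bound $C\min\{1,\e/\delta\}|\nabla^2 u_0|$ by (\ref{est.DPhi}); testing against $\phi\in H^1_0$, splitting at $\delta(x)=\e$, and combining Hardy's inequality $\|\phi/\delta\|_{L^2}\le C\|\nabla\phi\|_{L^2}$ on $\{\delta>\e\}$ with the boundary-layer Poincar\'e bound $\|\phi\|_{L^2(\{\delta<\e\})}\le C\e\|\nabla\phi\|_{L^2}$ places them in $H^{-1}$ with norm $O(\e)\|F\|_{L^2}$ as well. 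Since $\text{div}(w_\e)=-(\Phi_\e-P)\nabla^2 u_0$ is $O(\e)$ in $L^2$ and $w_\e|_{\partial\Omega}=0$, the energy estimate (\ref{ineq.energy}) yields $\|w_\e\|_{H^1_0}+\|\tilde\sigma_\e\|_{L^2_0}\le C\e\|F\|_{L^2}$, and absorbing $\|\e q^\e\nabla^2 u_0\|_{L^2}=O(\e)\|F\|_{L^2}$ delivers the $p=2$ case.

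For the $L^p$ endpoint I split $\int k(|x-y|)\,dy$ at $|x-y|\sim\e$: the inner piece is $O(\e)$ from $\int_0^\e r^{1-d}\cdot r^{d-1}\,dr$, while on the outer piece the substitution $v=\ln(r/\e)$ converts $\int_\e^{R_0}\e(\ln(r/\e))^j/r\,dr$ into $(j{+}1)^{-1}\e(\ln[\e^{-1}R_0+2])^{j+1}\le C\e(\ln[\e^{-1}R_0+2])^4$. Young's (convolution) inequality then produces $L^p\to L^p$ bounds with this same constant for every $p\in[1,\infty]$, while the leftover $(\Phi_\e-P)\nabla^2 u_0$ is $O(\e)\|F\|_{L^p}$ via (\ref{est.PhiLip}) and the $W^{2,p}$ Calder\'on--Zygmund estimate for the constant-coefficient Stokes system on $C^{1,1}$ domains (valid for $1<p<\infty$). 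Riesz--Thorin interpolation between the $p=2$ bound (constant $C\e$) and the $p\in\{1,\infty\}$ bounds (constant $C\e(\ln[\e^{-1}R_0+2])^4$), with interpolation parameter $\theta=2|1/p-1/2|$, gives $\e^{1-\theta}\cdot(\e(\ln[\e^{-1}R_0+2])^4)^\theta=\e(\ln[\e^{-1}R_0+2])^{8|1/2-1/p|}$, as claimed.

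The main obstacle is Step 2: controlling the non-divergence residuals $\min\{1,\e/\delta\}|\nabla^2 u_0|$ in $H^{-1}$ with norm $O(\e)\|F\|_{L^2}$ and \emph{no} log loss, since any extra factor of $\ln$ there would propagate through interpolation and pollute the sharp exponent $8|1/2-1/p|$ for every $p\ne 2$. A subsidiary technicality in Step 3 is the bookkeeping of the log powers, which makes the pressure kernel ($j=3$) the limiting estimate and forces the final exponent to be $8$ rather than $6$.
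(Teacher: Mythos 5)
Your proposal is correct and matches the paper's own proof in all essentials: the endpoint cases $p=1,\infty$ via the kernel bounds coming from the expansions of $\nabla_x G_\e$ and $\Pi_\e$ (giving $\e(\ln[\e^{-1}R_0+2])^4$), the sharp $p=2$ case via Lemma \ref{lem.w-system} with $(\Phi_\e,\Lambda_\e)$, the energy estimate and Hardy's inequality for the non-divergence residuals bounded by $C\min\{1,\e/\delta\}|\nabla^2 u_0|$, and Riesz--Thorin interpolation producing the exponent $8|\tfrac12-\tfrac1p|$. Your splitting of the Hardy argument at $\delta=\e$ and the use of Young's inequality for all $p$ (rather than only the endpoints) are only cosmetic variations on the paper's argument.
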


\begin{proof}
Based on Lemma \ref{lem.Dirichlet-corrector.estimate} and the fact that $\|\nabla^2 u_0\|_{L^p(\Omega)}\le C\|F\|_{L^p(\Omega)}$ for $1< p < \infty$, it is sufficient to show for $1\le p\le \infty$ that
\begin{equation}\label{ineq.convergence-rates.dLp}
\begin{aligned}
\Big\|\frac{\partial u_\varepsilon}{\partial x_i}-\frac{\partial}{\partial x_i}\Phi_{\varepsilon,j}^\beta \frac{\partial u_0^\beta}{\partial x_j}\Big\|_{L^{p}(\Omega)}&+\Big\|p_\varepsilon-p_0-\Big[\Lambda_{\varepsilon,j}^\beta\frac{\partial u_0^\beta}{\partial x_j}-\average_\Omega \Lambda_{\varepsilon,j}^\beta(z)\frac{\partial u_0^\beta}{\partial z_j}dz \Big]\Big\|_{L^p(\Omega)}\\
&\qquad\le C\varepsilon(\ln[\varepsilon^{-1}R_0+2])^{8|\frac{1}{2}-\frac{1}{p}|}\|F\|_{L^p(\Omega)}.
\end{aligned}
\end{equation}
In fact, following by the integral representation (\ref{def.intRep.onlyF}) and Corollary \ref{coro.PiExp}, we obtain that
\begin{equation*}
\begin{aligned}
&\Big|\frac{\partial u_\varepsilon}{\partial x_i}-\frac{\partial}{\partial x_i} \Phi_{\varepsilon,j}^\beta(x) \frac{\partial u_0^\beta}{\partial x_j}\Big|+\Big|p_\varepsilon(x)-p_0(x)-\Big[\Lambda_{\varepsilon,j}^\beta(x)\frac{\partial u_0^\beta}{\partial x_j}-\average_\Omega \Lambda_{\varepsilon,j}^\beta(z)\frac{\partial u_0^\beta(z)}{\partial z_j}dz \Big]\Big|\\
&\le C\int_{D_\varepsilon(x)} \frac{|F(y)|}{|x-y|^{d-1}} dy + C\varepsilon\int_{\Omega \backslash D_\varepsilon(x)} \frac{(\ln[\varepsilon^{-1}|x-y|+2])^3|F(y)|}{|x-y|^{d}} dy.
\end{aligned}
\end{equation*}
This provides that left-hand side of (\ref{ineq.convergence-rates.dLp}) is bounded by 
$C\varepsilon(\ln[\varepsilon^{-1}R_0+2])^4\|F\|_{L^p(\Omega)}$ for the case $p=1$ and $\infty$. Thus, by the Riesz interpolation theorem, it remains to prove (\ref{ineq.convergence-rates.dLp}) for the case $p=2$. In order to do so, we let $(w_\varepsilon,\tau_\varepsilon)$ be defined the same as in (\ref{def.w.Green}).
By Lemma \ref{lem.w-system} and the energy estimate (\ref{ineq.energy}), we have
\begin{equation}\label{ineq.energy.wtau}
\begin{aligned}
\|\nabla w_\varepsilon\|_{L^2(\Omega)}+\|\tau_\varepsilon\|_{L^2_0(\Omega)}&\le C\|\nabla^2 u_0\|_{L^2(\Omega)}+C\varepsilon\|F_\varepsilon\|_{H^{-1}(\Omega)}\\
\end{aligned}
\end{equation}
where $F_\varepsilon$ is defined by (\ref{def.Fgh.e}). 

For any $\varphi\in H^1_0(\Omega)$, by Lemma \ref{lem.Dirichlet-corrector.estimate}, we have
$$
\begin{aligned}
\left|\int_\Omega F_\varepsilon(x) \varphi(x)dx\right|&\le C\e \int_\Omega \delta(x)^{-1}|\nabla^2 u_0(x)||\varphi(x)|dx\\
&\le C\e \|\delta(x)^{-1}\varphi\|_{L^2(\Omega)}\|\nabla^2 u_0\|_{L^2(\Omega)}\\
&\le C\e \|\nabla \varphi\|_{L^2(\Omega)}\|\nabla^2 u_0\|_{L^2(\Omega)},\\
\end{aligned}
$$
where we have used the Hardy's inequality in the second inequality. This implies that $\|F_\varepsilon\|_{H^{-1}(\Omega)}\le C\e \|\nabla^2 u_0\|_{L^2(\Omega)}$. Thus (\ref{ineq.energy.wtau}) leads to
\begin{equation*}
\begin{aligned}
\|\nabla w_\varepsilon\|_{L^2(\Omega)}+\|\tau_\varepsilon\|_{L^2_0(\Omega)}\le C\varepsilon\|\nabla^2 u_0\|_{L^2(\Omega)},
\end{aligned}
\end{equation*}
and hence the case $p=2$ now follows. This completes the proof.
\end{proof}

\section{Asymptotic Expansions of $\nabla_x \nabla_y G_\e, \nabla_y \Pi_\e$ and $(\Gamma_\e,Q_\e)$}
In this section, we will show the asymptotic expansions of $\nabla_x\nabla_y G_\varepsilon$ and $\nabla_y \Pi_\varepsilon$ in a bounded $C^{3,\eta}$ domain. We also state the corresponding results for the fundamental solutions without a concrete proof.

\begin{theorem}\label{thm.asymp-expansion.ddGe}
Let $\Omega$ be a bounded $C^{3,\eta}$ domain and $A$ satisfy (\ref{cond.ellipticity}), (\ref{cond.periodicity}) and (\ref{cond.holder}). Then

(i). Asymptotic expansion for $\nabla_x\nabla_y G_\varepsilon(x,y)$,
\begin{equation}\label{ineq.expansion.ddGe}
\begin{aligned}
\bigg|\frac{\partial^2}{\partial x_i\partial y_j}\{G_\varepsilon^{\alpha\beta}(x,y)\}-\frac{\partial}{\partial x_i}\{\Phi_{\varepsilon,k}^{\alpha\gamma}(x)\}\cdot&\frac{\partial^2}{\partial x_k\partial y_\ell}\{G_0^{\gamma\sigma}(x,y)\}\cdot\frac{\partial}{\partial y_j}\{\Phi_{\varepsilon,\ell}^{*\beta\sigma}(y)\}\bigg|\\
&\le \frac{C\varepsilon(\ln[\varepsilon^{-1}|x-y|+2])^2}{|x-y|^{d+1}},
\end{aligned}
\end{equation}

(ii). Asymptotic expansion for $\nabla_y \Pi_\varepsilon(x,y)$,
\begin{equation}\label{ineq.expansion.dPie}
\begin{aligned}
&\Bigg|\bigg[\frac{\partial}{\partial y_j}\{\Pi^\beta_\varepsilon(x,y)\}-\frac{\partial}{\partial y_j}\{\Phi_{\varepsilon,\ell}^{*\beta\sigma}(y)\}\frac{\partial}{\partial y_\ell}\{\Pi^\sigma_0(x,y)\}\\
&\qquad\qquad\qquad-\Lambda_{\varepsilon,k}^\gamma(x)\frac{\partial^2}{\partial x_k\partial y_\ell}\{G_0^{\gamma\sigma}(x,y)\}\cdot\frac{\partial}{\partial y_j}\{\Phi_{\varepsilon,\ell}^{*\beta\sigma}(y)\}\bigg]\\
&\quad-\bigg[\frac{\partial}{\partial y_j}\{\Pi^\beta_\varepsilon(z,y)\}-\frac{\partial}{\partial y_j}\{\Phi_{\varepsilon,\ell}^{*\beta\sigma}(y)\}\frac{\partial}{\partial y_\ell}\{\Pi^\sigma_0(z,y)\}\\
&\quad\qquad\qquad\qquad-\Lambda_{\varepsilon,k}^\gamma(z)\frac{\partial^2}{\partial z_k\partial y_\ell}\{G_0^{\gamma\sigma}(z,y)\}\cdot\frac{\partial}{\partial y_j}\{\Phi_{\varepsilon,\ell}^{*\beta\sigma}(y)\}\bigg]\Bigg|\\
&\le \frac{C\varepsilon(\ln[\varepsilon^{-1}|x-y|+2])^2}{|x-y|^{d+1}}+\frac{C\varepsilon(\ln[\varepsilon^{-1}|z-y|+2])^2}{|z-y|^{d+1}},
\end{aligned}
\end{equation}
for any $x,\,y\in \Omega$ and $x\neq y$, where $C$ depends only on $d$, $\eta$, $A$ and $\Omega$.
\end{theorem}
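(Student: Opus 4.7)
The strategy for both (i) and (ii) is to differentiate the Green's function once in $y$ and then reapply the two-scale expansion scheme from the proof of part (ii) of Theorem \ref{thm.main}, but with the homogenized comparison now incorporating the adjoint Dirichlet correctors at $y$ as suggested by (\ref{ineq.expansion.dGe*}). Fix $y\in\Omega$ and indices $\beta,j$, and set
\begin{align*}
u_\e^\alpha(x) &:= \tfrac{\partial}{\partial y_j} G_\e^{\alpha\beta}(x,y), \qquad p_\e(x) := \tfrac{\partial}{\partial y_j}\Pi_\e^\beta(x,y), \\
u_0^\alpha(x) &:= \tfrac{\partial}{\partial y_j}\{\Phi_{\e,\ell}^{*\beta\sigma}(y)\}\,\tfrac{\partial}{\partial y_\ell} G_0^{\alpha\sigma}(x,y),\\
p_0(x) &:= \tfrac{\partial}{\partial y_j}\{\Phi_{\e,\ell}^{*\beta\sigma}(y)\}\,\tfrac{\partial}{\partial y_\ell}\Pi_0^\sigma(x,y).
\end{align*}
By (\ref{eq.dyGreen}) the pair $(u_\e,p_\e)$ satisfies the homogeneous $\cL_\e$-Stokes system in $\Omega\setminus\{y\}$ with $u_\e=0$ on $\partial\Omega$; since $G_0(\cdot,y)=0$ on $\partial\Omega$, the pair $(u_0,p_0)$ likewise solves the homogeneous $\cL_0$-Stokes system in $\Omega\setminus\{y\}$ with $u_0=0$ on $\partial\Omega$. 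Most importantly, the already established adjoint expansion (\ref{ineq.expansion.dGe*}) furnishes the key input
\[
|u_\e(x) - u_0(x)| \;\le\; \frac{C\e(\ln[\e^{-1}|x-y|+2])^2}{|x-y|^d}.
\]

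Next, apply Lemma \ref{lem.w-system} with $V_{\e,k}=\Phi_{\e,k}$ and $T_{\e,k}=\Lambda_{\e,k}$ to this pair, producing $W_\e$ and $\tau_\e$ of the form (\ref{def.w.Green}) (with the new $u_0,p_0$). Then $(W_\e,\tau_\e)$ solves a Stokes system in $\Omega\setminus\{y\}$ with right-hand sides $F_\e,h_\e,g_\e$ of the form (\ref{def.Fgh.e}) and $W_\e=0$ on $\partial\Omega$. Fix $x_0\in\Omega$ and set $r=|x_0-y|/8$. Absorb the divergence datum $g_\e$ via a corrector supplied by Theorem \ref{thm.divergence.modifiedC1} and apply Theorem \ref{thm.modified-boundary-estimate} to the adjusted system in $D_{5r}(x_0)$ with adjustable parameter $t=\e$, exactly as in the proof of Theorem \ref{thm.main}(ii). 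The $L^1$ average of the adjusted $W_\e$ is controlled by the displayed estimate above together with $\e\|\nabla u_0\|_{L^\infty(D_{5r}(x_0))}$, both of order $\e(\ln[\e^{-1}r+2])^2/r^d$; and the source norms $\|F_\e\delta\|_{L^\infty}$, $\|h_\e\|_{L^\infty}$, $\|g_\e\|_{L^\infty}$ together with their H\"older semi-norms reduce, via Lemma \ref{lem.Dirichlet-corrector.estimate}, to bounds on $\|\nabla^2 u_0\|_{L^\infty(D_{5r}(x_0))}$ and $[\nabla^2 u_0]_{C^{0,\eta}(D_{5r}(x_0))}$. These are of order $1/r^{d+1}$ and $1/r^{d+1+\eta}$ respectively, thanks to the $C^{3,\eta}$ regularity of $\partial\Omega$ (which guarantees $C^{2,\eta}$ Schauder estimates for $\nabla_y G_0(\cdot,y)$ in $x$ up to the boundary).

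Putting the above into Theorem \ref{thm.modified-boundary-estimate} yields
\[
\|\nabla W_\e\|_{L^\infty(D_r(x_0))} + \osc{D_r(x_0)}{\tau_\e} \;\le\; \frac{C\e(\ln[\e^{-1}r+2])^2}{r^{d+1}},
\]
and unpacking $\nabla W_\e$ at $x_0$ gives (\ref{ineq.expansion.ddGe}). For (ii), observe that $\tau_\e$ differs from the bracketed expression in (\ref{ineq.expansion.dPie}) only by the oscillatory term $\e q_{ik}^\gamma(x/\e)\partial_i\partial_k u_0^\gamma(x)$, whose pointwise magnitude is already $O(\e/r^{d+1})$ and hence absorbable into the stated error. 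The local oscillation bound then upgrades to the full two-point difference estimate (\ref{ineq.expansion.dPie}) between arbitrary $x,z\in\Omega$ by the same annular covering argument used in the proof of (iv) of Theorem \ref{thm.green-function.pointwise}: cover $\Omega\setminus D_r(y)$ by finitely many balls whose diameters are comparable to their distances from $y$, apply the local oscillation bound on each, and sum the resulting geometric series. The principal technical difficulty is the careful bookkeeping of the various $\ln[\e^{-1}|x-y|+2]$ factors arising both from the input bound on $u_\e-u_0$ and from the logarithmic loss inherent in Theorem \ref{thm.modified-boundary-estimate} at $t=\e$, so as to arrive at exponent $2$ (and not higher) in the final error.
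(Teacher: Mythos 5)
Your proposal is correct and follows essentially the same route as the paper: differentiate the Green's function in $y$, take $u_0$, $p_0$ built from the adjoint Dirichlet corrector derivative at $y$, use the already-proved adjoint expansion (\ref{ineq.expansion.dGe*}) to control $\|u_\e-u_0\|_{L^\infty}$, and then rerun the adjustable-estimate machinery of Theorems \ref{thm.divergence.modifiedC1} and \ref{thm.modified-boundary-estimate} with $t=\e$ together with the $C^{3,\eta}$ Schauder bounds $\|\nabla^2 u_0\|_{L^\infty}\le Cr^{-d-1}$, $[\nabla^2 u_0]_{C^{0,\eta}}\le Cr^{-d-1-\eta}$, finishing (ii) by the same covering argument as in Theorem \ref{thm.green-function.pointwise}. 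The only difference is that you unpack the step the paper summarizes as ``a similar argument for (\ref{ineq.expansion.dGe.temp})'', which is a faithful expansion of the same argument.
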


\begin{proof}
Fix $x_0, \, y_0\in \Omega$ and $r=|x_0-y_0|/8$. Fix $1\le j,\beta\le d$ and let $(u_\varepsilon(x),p_\varepsilon(x))=\big(\frac{\partial }{\partial y_j}G_\varepsilon^\beta(x,y_0),\frac{\partial }{\partial y_j}\Pi_\varepsilon^{\beta}(x,y_0)\big)$ and
\begin{equation*}
\begin{aligned}
u_0^\alpha(x)&=\frac{\partial}{\partial y_j}\{\Phi_{\varepsilon,\ell}^{*\beta\sigma}\}(y_0)\cdot\frac{\partial }{\partial y_\ell}G_0^{\alpha\sigma}(x,y_0),\\
p_0(x)&=\frac{\partial}{\partial y_j}\{\Phi_{\varepsilon,\ell}^{*\beta\sigma}\}(y_0)\frac{\partial }{\partial y_\ell}\Pi_0^{\sigma}(x,y_0),
\end{aligned}
\end{equation*}
in $D_{6r}=D_{6r}(x_0)$. Therefore, $(u_\varepsilon, p_\varepsilon)$ and $(u_0,p_0)$ satisfy the Stokes systems (\ref{def.Dirichlet.Stokes}) and (\ref{def.Dirichlet.Stokes0}), respectively, with all data vanishing in $D_{6r}$. Let $(w_\varepsilon,\tau_\varepsilon)$ be defined the same as (\ref{def.w.Green}). By a similar argument for (\ref{ineq.expansion.dGe.temp}), it is not hard to see that
\begin{equation}\label{ineq.expansion.ddGe.temp}
\begin{aligned}
&\norm{\nabla w_\varepsilon}_{L^\infty(D_r)} +\osc{D_{r}}{\tau_\varepsilon}\\
& \le C \bigg\{ \frac{1}{r} \average_{D_{6r} } |u_\varepsilon-u_0| +\varepsilon\big(\ln[\varepsilon^{-1}r+2]\big)^2\|\nabla^2 u_0\|_{L^\infty(D_{6r})}+\varepsilon^{1+\eta}[\nabla^2 u_0]_{C^{0,\eta}(D_{6r})}\bigg\},\\
\end{aligned}
\end{equation}
By (\ref{ineq.expansion.dGe*}), we obtain that
\begin{equation*}
\|u_\varepsilon-u_0\|_{L^\infty(D_{6r})}\le C\varepsilon r^{-d}\big(\ln[\varepsilon^{-1}r+2]\big)^2.
\end{equation*}
By the $C^{3,\eta}$ estimate of $G_0$ in a bounded $C^{3,\eta}$ domain, we have $\|\nabla^2 u_0\|_{L^\infty(D_{6r})}\le Cr^{-d-1}$, and $[\nabla^2 u_0]_{C^{0,\eta}(D_{6r})}\le Cr^{-d-1-\eta}$. Then (\ref{ineq.expansion.ddGe.temp}) implies that
\begin{equation}\label{ineq.expansion.ddGe.temp2}
\begin{aligned}
&\Big\| \frac{\partial u_\varepsilon^\alpha}{\partial x_i}-\frac{\partial}{\partial x_i}\{\Phi_{\varepsilon,j}^{\alpha\beta}\}\cdot\frac{\partial u_0^\beta}{\partial x_j}\Big\|_{L^\infty(D_r)} +\osc{D_{r}}{p_\varepsilon-p_0-\Lambda_{\varepsilon,j}^\beta\frac{\partial u_0^\beta}{\partial x_j}}
\le  \frac{C\varepsilon\big(\ln[\varepsilon^{-1}r+2]\big)^2}{r^{d+1}}
.
\end{aligned}
\end{equation}
Now (\ref{ineq.expansion.ddGe}) is a direct consequence of (\ref{ineq.expansion.ddGe.temp2}), while (\ref{ineq.expansion.dPie}) follows by a covering argument used in Theorem \ref{thm.green-function.pointwise}.
\end{proof}

For asymptotic expansions of the fundamental solution $(\Gamma_\e,Q_\e)$, since we deal with $\Omega = \R^d$, the Dirichlet correctors $(\Phi_{\e,j}^\beta,\Lambda_{\e,j}^\beta)$ in the expansion can be simply replaced by the usual correctors $(\chi_j^\beta,\pi_j^\beta)$. Precisely, we have

\begin{theorem}\label{thm.Fund.Exp}
	Let $A$ satisfy (\ref{cond.ellipticity}), (\ref{cond.periodicity}) and (\ref{cond.holder}). Let $(\Gamma_\e,Q_\e)$ and $(\Gamma_0,Q_0)$ be the fundamental solutions of (\ref{def.FundSol}) and the corresponding homogenized system, respectively. Then
	\begin{enumerate}
		\item[(i)] For any $x,y\in \R^d$,
		\begin{equation}
			|\Gamma_\e(x,y) - \Gamma_0(x,y)| \le \frac{C\e}{|x-y|^{d-1}}.
		\end{equation}
		
		\item[(ii)] For any $x,y\in \R^d$,
		\begin{equation}
			\begin{aligned}
				\bigg|\frac{\partial}{\partial x_i}\{\Gamma_\varepsilon^{\alpha\beta}(x,y)\}-\frac{\partial}{\partial x_i}\{ P_j^{\alpha\gamma}(x) + \e \chi_j^{\alpha\gamma}(x/\e) \}\cdot\frac{\partial}{\partial x_j}\{\Gamma_0^{\gamma\beta}(x,y)\}\bigg| \qquad \quad \\
				\le \frac{C\varepsilon(\ln[\varepsilon^{-1}|x-y|+2])^2}{|x-y|^d},
			\end{aligned}
		\end{equation}
		and for any $x,z,y\in \R^d$,
		\begin{equation}
			\begin{aligned}
				&\bigg|\Big[Q^\beta_\varepsilon(x,y)-Q^\beta_0(x,y)-\pi_{j}^\gamma(x/\e)\frac{\partial}{\partial x_j}\{\Gamma^{\gamma\beta}_0(x,y)\}\Big]\\
				&\qquad\qquad-\Big[Q^\beta_\varepsilon(z,y)-Q^\beta_0(z,y)-\pi_{j}^\gamma(z/\e)\frac{\partial}{\partial z_j}\{\Gamma^{\gamma\beta}_0(z,y)\}\Big]\bigg|\\
				&\qquad\le \frac{C\varepsilon(\ln[\varepsilon^{-1}|x-y|+2])^2}{|x-y|^d}+\frac{C\varepsilon(\ln[\varepsilon^{-1}|z-y|+2])^2}{|z-y|^d}.
			\end{aligned}
		\end{equation}
		
		\item[(iii)] For any $x,y\in \R^d$,
		\begin{equation}
		\begin{aligned}
		&\bigg|\frac{\partial^2}{\partial x_i\partial y_j}\{\Gamma_\varepsilon^{\alpha\beta}(x,y)\}\\
		&\quad -\frac{\partial}{\partial x_i}\{ P_k^{\alpha\gamma}(x) + \e\chi_k^{\alpha\gamma}(x/\e) \}\cdot\frac{\partial^2}{\partial x_k\partial y_\ell}\{\Gamma^{\gamma\sigma}(x,y)\}\cdot\frac{\partial}{\partial y_j}\{P_\ell^{\beta\sigma}(y) + \e\chi_\ell^{*\beta\sigma}(y/\e) \}\bigg|\\
		&\qquad \qquad \le \frac{C\varepsilon(\ln[\varepsilon^{-1}|x-y|+2])^2}{|x-y|^{d+1}},
		\end{aligned}
		\end{equation}
		and
		\begin{equation}
		\begin{aligned}
		&\Bigg|\bigg[\frac{\partial}{\partial y_j}\{Q^\beta_\varepsilon(x,y)\}-\frac{\partial}{\partial y_j}\{P_\ell^{\beta\sigma}(y) + \e\chi_\ell^{*\beta\sigma}(y/\e) \}\frac{\partial}{\partial y_\ell}\{Q^\sigma_0(x,y)\}\\
		&\qquad\qquad\qquad-\pi_{k}^\gamma(x/\e)\frac{\partial^2}{\partial x_k\partial y_\ell}\{\Gamma_0^{\gamma\sigma}(x,y)\}\cdot\frac{\partial}{\partial y_j}\{P_\ell^{\beta\sigma}(y) + \e\chi_\ell^{*\beta\sigma}(y/\e) \}\bigg]\\
		&\quad-\bigg[\frac{\partial}{\partial y_j}\{Q^\beta_\varepsilon(z,y)\}-\frac{\partial}{\partial y_j}\{P_\ell^{\beta\sigma}(y) + \e\chi_\ell^{*\beta\sigma}(y/\e)\}\frac{\partial}{\partial y_\ell}\{Q^\sigma_0(z,y)\}\\
		&\quad\qquad\qquad\qquad-\pi_{k}^\gamma(z/\e)\frac{\partial^2}{\partial z_k\partial y_\ell}\{\Gamma_0^{\gamma\sigma}(z,y)\}\cdot\frac{\partial}{\partial y_j}\{P_\ell^{\beta\sigma}(y) + \e\chi_\ell^{*\beta\sigma}(y/\e)\}\bigg]\Bigg|\\
		&\le \frac{C\varepsilon(\ln[\varepsilon^{-1}|x-y|+2])^2}{|x-y|^{d+1}}+\frac{C\varepsilon(\ln[\varepsilon^{-1}|z-y|+2])^2}{|z-y|^{d+1}}.
		\end{aligned}
		\end{equation}
	\end{enumerate}
	The constant $C$ depends only on $d$ and $A$.
\end{theorem}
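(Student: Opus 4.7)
The plan is to transplant the arguments of Theorems \ref{thm.main} and \ref{thm.asymp-expansion.ddGe} from the bounded $C^{1,\eta}$ (resp.\ $C^{2,\eta}$, $C^{3,\eta}$) setting to $\Omega=\R^d$, with one key simplification: since there is no boundary, the Dirichlet correctors $(\Phi_{\e,j}^\beta,\Lambda_{\e,j}^\beta)$ are replaced throughout by the canonical periodic correctors $(P_j^\beta+\e\chi_j^\beta(\cdot/\e),\,\pi_j^\beta(\cdot/\e))$. All of the ingredients we need — Lemma \ref{lem.w-system}, the interior version of Theorem \ref{thm.modified-boundary-estimate}, and the pointwise Green's function bounds of Theorem \ref{thm.green-function.pointwise} and \ref{thm.FundSol} — are already available in this interior form (cf.\ the remark after Theorem \ref{thm.boundary-estimate}), so we just need to assemble them correctly.

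For part~(i), I would fix $x_0,y_0\in\R^d$, set $r=|x_0-y_0|/8$, and reproduce the duality step of Theorem \ref{thm.main}(i): test against $F\in C_0^\infty(B_r(y_0);\R^d)$, represent $u_\e$ and $u_0$ via (\ref{def.intRep.onlyF}), and apply the interior analog of Lemma \ref{lem.ue-u0.infty} to $w_\e=u_\e-u_0-\e\chi^\e\nabla u_0$. Here the simplification is substantial: in $\R^d$ the auxiliary boundary-layer piece $w_\e^{(2)}$ from the bounded-domain proof disappears entirely (no Dirichlet data to cancel), so only the $H^1$ energy estimate for $w_\e^{(1)}$ and the standard interior $W^{2,p}$ estimates for $\Gamma_0(x_0,\cdot)$ at $y_0$ enter, yielding the $C\e/|x-y|^{d-1}$ rate by duality.

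For part~(ii), I would set $(u_\e,p_\e)=(\Gamma_\e^\beta(\cdot,y_0),Q_\e^\beta(\cdot,y_0))$, $(u_0,p_0)=(\Gamma_0^\beta(\cdot,y_0),Q_0^\beta(\cdot,y_0))$, and apply Lemma \ref{lem.w-system} with the choice $V_{\e,j}^\beta=P_j^\beta+\e\chi_j^\beta(x/\e)$, $T_{\e,j}^\beta=\pi_j^\beta(x/\e)$. With this choice the last two terms on the right-hand side of (\ref{eq.w-system}) vanish identically, so $w_\e=u_\e-u_0-\e\chi^\e\nabla u_0$ and $\tau_\e=p_\e-p_0-\pi^\e\nabla u_0-\e q^\e\nabla^2 u_0$ solve a Stokes system on $B_{5r}(x_0)$ with only the $\phi^\e,q^\e$-type divergence data (of size $O(\e)$ in $L^\infty$, $O(\e^{1-\eta})$ in $C^{0,\eta}$) and a small divergence of $w_\e$ that can be corrected via Theorem \ref{thm.divergence.modifiedC1} exactly as in the bounded case. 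Then the interior adjustable estimate of Theorem \ref{thm.modified-boundary-estimate} with the scale $t=\e$, combined with part~(i) to bound $\frac{1}{r}\average_{B_{5r}(x_0)}|u_\e-u_0|$ and the standard decay rates $\|\nabla^2\Gamma_0\|_{L^\infty(B_{5r}(x_0))}\le Cr^{-d}$, $[\nabla^2\Gamma_0]_{C^{0,\eta}}\le Cr^{-d-\eta}$, gives the pointwise expansion at scale $r^{-d}(\ln[\e^{-1}r+2])^2$. The difference-form statement for $Q_\e$ on $\R^d\times\R^d$ then comes out of the covering/Harnack-chain argument already used in Theorem \ref{thm.green-function.pointwise}(iv), applied in $\R^d\setminus B_r(y_0)$ rather than $\Omega\setminus D_r(y_0)$.

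Part~(iii) proceeds in parallel with Theorem \ref{thm.asymp-expansion.ddGe}: fix $j,\beta$ and let $(u_\e,p_\e)=(\partial_{y_j}\Gamma_\e^\beta(\cdot,y_0),\,\partial_{y_j}Q_\e^\beta(\cdot,y_0))$, with $u_0,p_0$ given by multiplying the corresponding homogenized derivatives by the adjoint-corrector factor $\partial_{y_j}\{P_\ell^{\beta\sigma}(y_0)+\e\chi_\ell^{*\beta\sigma}(y_0/\e)\}$; then rerun the part~(ii) argument verbatim, using part~(ii) (applied on the $y$-side, through the adjoint relation $\Gamma_\e^*(x,y)=\Gamma_\e(y,x)^T$) to supply the input bound $\|u_\e-u_0\|_{L^\infty(B_{6r}(x_0))}\le C\e r^{-d}(\ln[\e^{-1}r+2])^2$. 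Together with $\|\nabla^2 u_0\|_{L^\infty}\le Cr^{-d-1}$ this produces the sharp $r^{-d-1}$ rate. The main technical obstacle will be the pressure component $Q_\e$: unlike $\Gamma_\e$, it is only defined modulo an additive constant and does not decay at infinity, so one has to argue at the level of \emph{differences} $Q_\e(x,y)-Q_\e(z,y)$ using the oscillation covering argument, rather than by direct integral representation as in the bounded-domain Corollary \ref{coro.PiExp}; the compensating constant $\overline{Q}$ of Theorem \ref{thm.FundSol} plays no role in the difference form, which is precisely why the theorem is stated that way.
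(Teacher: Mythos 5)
Your proposal is correct and is essentially the approach the paper intends: the paper states Theorem \ref{thm.Fund.Exp} without a concrete proof, asserting only that the arguments of Theorems \ref{thm.main} and \ref{thm.asymp-expansion.ddGe} carry over to $\Omega=\R^d$ with the Dirichlet correctors $(\Phi_{\e,j}^\beta,\Lambda_{\e,j}^\beta)$ replaced by the periodic correctors $(P_j^\beta+\e\chi_j^\beta(\cdot/\e),\pi_j^\beta(\cdot/\e))$, which is exactly the transplantation you carry out. Your observations that with this choice of $(V_{\e,j}^\beta,T_{\e,j}^\beta)$ the last two terms of (\ref{eq.w-system}) vanish, that the boundary-layer piece $w_\e^{(2)}$ disappears in part (i), and that the pressure expansion must be handled in difference form via the oscillation/covering argument (rather than via the normalization $\int_\Omega\Pi_\e=0$ used in Corollary \ref{coro.PiExp}) are all accurate and consistent with the paper's bounded-domain proofs.
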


\section*{Acknowledgement}

Both authors would like to thank Professor Zhongwei Shen for insightful conversations and discussions regarding this work. The second author is supported in part by National Science Foundation grant DMS-1600520.


\appendix
\section{Proof of Theorem \ref{thm.convergence-rates}}
Theorem \ref{thm.convergence-rates} is a straightforward corollary of \cite[Theorem 3.1]{GuXu17} and an argument of interpolation. We provide a proof here for readers' convenience. Since we assume $A$ is H\"older continuous, we have $\|\chi\|_{L^\infty(Y)}\le C$. Then \cite[Theorem 3.1]{GuXu17} reads as follows.
\begin{theorem}\label{thm.C1Rate}
	Suppose $\Omega$ is a bounded $C^1$ domain and $A$ satisfies (\ref{cond.ellipticity}), (\ref{cond.periodicity}) and (\ref{cond.holder}). Assume $F \in L^p(\Omega;\R^d)$, $g\in W^{1,p}(\Omega)$ for some $p>d$ and $f\in C^{0,1}(\partial\Omega;\R^d)$ satisfying the compatibility (\ref{cond.compatibility}). Let $(u_\e,p_\e)$ and $(u_0,p_0)$ be the solutions of (\ref{def.Dirichlet.Stokes}) and (\ref{def.Dirichlet.Stokes0}), respectively. Then for any $\sigma \in (0,1/2)$,
	\begin{align}\label{est.C1rate}
	\begin{aligned}
	\norm{u_\e - u_0}_{L^2(\Omega)} &+ \norm{p_\e - p_0 - \pi^\e \nabla u_0}_{L^2_0(\Omega)} \\
	&\qquad \le C\e^\sigma \Big(\norm{F}_{L^p(\Omega)} + \norm{g}_{W^{1,p}(\Omega)} + \norm{f}_{C^{0,1}(\partial\Omega)} \Big),
	\end{aligned}
	\end{align}
	where $C$ depends only on $d,p, \sigma, A$ and $\Omega$.
\end{theorem}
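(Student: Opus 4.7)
The strategy is the classical two-scale asymptotic expansion, tailored to the Stokes setting. Taking the canonical choice $V_{\e,j}^\beta=P_j^\beta+\e\chi_j^\beta(\cdot/\e)$ and $T_{\e,j}^\beta=\pi_j^\beta(\cdot/\e)$, Lemma \ref{lem.w-system} already identifies the first-order remainder system; what is missing is a rate. The two obstructions specific to this setting are: (a) the raw corrector $\e\chi^\e\nabla u_0$ does not vanish on $\partial\Omega$, producing a boundary layer; and (b) in a mere $C^1$ domain $u_0$ is not globally in $W^{2,p}$, so $\nabla^2 u_0$ is only integrable with a blow-up near $\partial\Omega$. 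Both are handled simultaneously by combining a cutoff with a smoothing of $\nabla u_0$.

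\textbf{Construction.} Fix a cutoff $\psi_\e\in C_c^\infty(\Omega)$ with $\psi_\e\equiv 1$ on $\Omega_{2\e}:=\{x\in\Omega:\text{dist}(x,\partial\Omega)>2\e\}$, $\psi_\e\equiv 0$ outside $\Omega_\e$, and $|\nabla\psi_\e|\le C\e^{-1}$. Let $S_\e$ denote a Steklov-type smoothing at scale $\e$ compatible with the divergence structure. Define
\[
w_\e := u_\e - u_0 - \e\chi^\e\, S_\e(\psi_\e\nabla u_0), \qquad \tau_\e := p_\e - p_0 - \pi^\e\, S_\e(\psi_\e\nabla u_0) - \e q^\e \nabla S_\e(\psi_\e\nabla u_0).
\]
Since the subtracted term is supported in $\Omega_\e$, we have $w_\e = 0$ on $\partial\Omega$. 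Repeating the computation of Lemma \ref{lem.w-system} with $\psi_\e$ and $S_\e$ inserted at every occurrence of $\nabla u_0$ shows that $(w_\e,\tau_\e)$ solves a Stokes system whose right-hand side decomposes into: (I) an interior divergence-form term $\e\,\text{div}(\Psi^\e\,\psi_\e\nabla^2 u_0)$ for bounded $1$-periodic tensors $\Psi^\e$ built from $\phi, q, \chi, A$; (II) boundary-layer terms supported in $\Omega_\e\setminus\Omega_{2\e}$, proportional to $\nabla\psi_\e\otimes\nabla u_0$ and $(1-\psi_\e)\nabla u_0$; and (III) a smoothing commutator $(I-S_\e)\nabla u_0$ in divergence form. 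The divergence constraint $\text{div}(w_\e)$ reduces to a similar small collection using $\int_Y\chi=0$ together with the sum $\text{div}(\chi_j)=0$.

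\textbf{Energy estimate and main obstacle.} Applying Theorem \ref{thm.energy-estimate} to $(w_\e,\tau_\e)$, and using $\|\chi^\e\|_{L^\infty}+\|\phi^\e\|_{L^\infty}+\|q^\e\|_{L^\infty}\le C$, we get
\[
\|w_\e\|_{H^1(\Omega)}+\|\tau_\e\|_{L^2_0(\Omega)} \le C\big(\e\|\psi_\e\nabla^2 u_0\|_{L^2(\Omega)} + \|\nabla u_0\|_{L^2(\Omega_\e\setminus\Omega_{2\e})} + \e\|\nabla(\psi_\e\nabla u_0)\|_{L^2}\big).
\]
To extract $\e^\sigma$ for any $\sigma<1/2$, the key inputs are: (i) the Meyers higher integrability $\nabla u_0\in L^{2+\delta}(\Omega)$ for some $\delta>0$ (available in $C^1$ domains), which yields $\|\nabla u_0\|_{L^2(\Omega_\e\setminus\Omega_{2\e})}\lesssim\e^{\delta/(2(2+\delta))}$ via Hölder and $|\Omega_\e\setminus\Omega_{2\e}|\lesssim\e$; and (ii) a weighted interior bound of the form $\|\text{dist}(\cdot,\partial\Omega)^{1/2}\nabla^2 u_0\|_{L^2}\le C$, which gives $\|\psi_\e\nabla^2 u_0\|_{L^2}\lesssim \e^{-1/2}$. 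Combining these, all three error contributions are bounded by $C\e^\sigma$ times the data norms $\|F\|_{L^p}+\|g\|_{W^{1,p}}+\|f\|_{C^{0,1}}$. The hardest step is precisely (ii): quantifying the boundary blow-up of $\nabla^2 u_0$ under the merely $C^1$ regularity of $\partial\Omega$, which is why we lose from $\e^{1/2}$ to $\e^\sigma$ for every $\sigma<1/2$. Finally, the desired $L^2$-bound on $u_\e-u_0$ follows from the triangle inequality against $\|\e\chi^\e S_\e(\psi_\e\nabla u_0)\|_{L^2}\le C\e\|\nabla u_0\|_{L^2}$, and the pressure conclusion is recovered by subtracting the mean from $\tau_\e$ and observing that $\pi^\e \nabla u_0 - \pi^\e S_\e(\psi_\e\nabla u_0)$ is small in $L^2_0$ by the same estimates.
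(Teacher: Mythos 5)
First, a point of comparison: the paper does not prove this statement at all --- Theorem \ref{thm.C1Rate} is quoted verbatim from \cite[Theorem 3.1]{GuXu17}, and the appendix only uses it (after mollifying $h,g,f$) to deduce Theorem \ref{thm.convergence-rates}. So your proposal can only be judged on its own merits. Its skeleton --- first-order two-scale approximation, boundary cutoff $\psi_\e$, Steklov smoothing $S_\e$, then the energy estimate of Theorem \ref{thm.energy-estimate} applied to the remainder system of (a modified) Lemma \ref{lem.w-system} --- is the standard route for such rates and is presumably close in spirit to the cited proof; the algebraic part (the modified remainder system, the treatment of $\text{div}(w_\e)$ via $\text{div}(\chi_j^\beta)=0$, and the recovery of the pressure term from $\tau_\e$) is sound.

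The genuine gap is that the entire quantitative content of the theorem is carried by your two ``key inputs,'' which are asserted rather than established, and one of them is too weak even if granted. (a) Meyers higher integrability gives $\nabla u_0\in L^{2+\delta}(\Omega)$ only for some fixed small $\delta>0$, so your layer bound is $\|\nabla u_0\|_{L^2(\Omega\setminus\Omega_{2\e})}\lesssim \e^{\delta/(2(2+\delta))}$, a fixed small power of $\e$; this cannot yield $\e^\sigma$ for \emph{every} $\sigma<1/2$ as claimed. What is actually needed is $\|\nabla u_0\|_{L^q(\Omega)}\le C_q\big(\norm{F}_{L^p}+\norm{g}_{W^{1,p}}+\norm{f}_{C^{0,1}}\big)$ for arbitrarily large $q$ (via $W^{1,q}$ solvability of the constant-coefficient Stokes system in $C^1$ domains), or the sharper layer estimate $\int_{\Omega\setminus\Omega_{c\e}}|\nabla u_0|^2\le C\e\,\|(\nabla u_0)^*\|_{L^2(\partial\Omega)}^2$ from solvability of the $L^2$ regularity problem in $C^1$ domains --- nontrivial facts that must be invoked with justification. (b) The weighted bound $\int_\Omega \delta\,|\nabla^2 u_0|^2\le C$ is a square-function-type estimate for Stokes in a merely $C^1$ domain; you call it ``the hardest step'' but give no proof or reference, and without it the only generally available substitute (interior estimates on Whitney balls, giving $\|\delta\,\nabla^2 u_0\|_{L^2}\lesssim\|\nabla u_0\|_{L^2}+\dots$) yields $\|\psi_\e\nabla^2 u_0\|_{L^2}\lesssim\e^{-1}$ and hence an $O(1)$ error, so the argument collapses. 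Relatedly, your accounting of the loss is misplaced: if (b) held exactly, that term would contribute a clean $\e^{1/2}$; in a $C^1$ domain the passage from $\e^{1/2}$ to $\e^\sigma$ comes from the near-boundary control of $\nabla u_0$ and $\nabla^2 u_0$ in terms of the stated data, which is precisely what remains unproved here.
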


\begin{proof}[Proof of Theorem \ref{thm.convergence-rates}]
	Since $\Omega$ is $C^1$, $g$ and $h$ can be extended to the whole space $\R^d$ by $\widetilde{g}$ and $\widetilde{h}$, respectively, such that $\widetilde{g} = g,\  \widetilde{h} = g$ and $\norm{\widetilde{g}}_{C^{0,\eta}(\R^d)} \le C \norm{g}_{C^{0,\eta}(\Omega)}, \ \norm{\widetilde{h}}_{C^{0,\eta}(\R^d)} \le C \norm{h}_{C^{0,\eta}(\Omega)}$. Also, $f$ can be extended to a function $\tilde{f} \in H^{3/2}(\R^d;\R^d)$ so that $\widetilde{f} = f$ on $\partial\Omega$ and $\norm{\widetilde{f}}_{H^{3/2}(\R^d)} \le C\norm{f}_{H^1(\partial\Omega)}$. Let $\varphi \in C_0^\infty(\R^d)$ be a smooth cut-off function with $\int_{\R^d} \varphi = 1$. Let $\varphi_r(x) = r^{-d}\varphi(x/r)$, where $r\in (0,1)$ is to be determined. Define $\widetilde{h}_r = \varphi_r* \widetilde{h},\  \widetilde{g}_r = \varphi_r* \widetilde{g}$ and $\widetilde{f}_r = \varphi_r* \widetilde{f} $. Then it is not hard to verify that
	\begin{equation}\label{est.hr-h}
	\norm{\widetilde{h}_r - h}_{L^2(\Omega)} \le Cr^\eta \norm{h}_{C^{0,\eta}(\Omega)}, \quad \norm{\widetilde{g}_r - g}_{L^2(\Omega)} \le Cr^\eta \norm{g}_{C^{0,\eta}(\Omega)},
	\end{equation}
	and
	\begin{equation}\label{est.hr}
	\norm{\widetilde{h}_r}_{W^{1,p}(\Omega)} \le C r^{\eta-1} \norm{h}_{C^{0,\eta}(\Omega)}, \quad \norm{\widetilde{g}_r}_{W^{1,p}(\Omega)} \le C r^{\eta-1} \norm{g}_{C^{0,\eta}(\Omega)}.
	\end{equation}
	Also, we have
	\begin{equation}\label{est.fr}
	\norm{\widetilde{f}_r - f}_{H^{1/2}(\partial\Omega)} \le Cr^{1/2} \norm{f}_{H^1(\partial\Omega)}, \quad \norm{\widetilde{f}_r}_{C^{0,1}(\partial\Omega)} \le Cr^{(1-d)/2} \norm{f}_{H^1(\partial\Omega)}.
	\end{equation}
	Set
	\begin{equation*}
	C_r = \frac{1}{|\Omega|} \bigg( \int_{\Omega} \widetilde{g}_r - \int_{\partial\Omega} \widetilde{f}_r\cdot n\bigg) = \frac{1}{|\Omega|} \bigg( \int_{\Omega} \widetilde{g}_r - \int_{\Omega} g - \int_{\partial\Omega} \widetilde{f}_r\cdot n + \int_{\partial\Omega} f\cdot n \bigg).
	\end{equation*}
	Observe that (\ref{est.hr-h}) and (\ref{est.fr}) imply
	\begin{equation}\label{est.Cr}
	|C_r| \le Cr^{\eta} \Big(\norm{g}_{C^{0,\eta}(\Omega)} + \norm{f}_{H^1(\partial\Omega)} \Big).
	\end{equation}
	
	Now let $(u_{\e,r}, p_{\e,r})$ and $(u_{0,r}, p_{0,r})$ be solutions of
	\begin{equation*}
	\left\{
	\begin{aligned}
	\cL_\e(u_{\e,r})+\nabla p_{\e,r} &= F + \tdiv(\widetilde{h}_r) &\qquad \text{ in }\Omega,\\
	\tdiv(u_{\e,r}) &=\widetilde{g}_r - C_r &\qquad\text{ in }\Omega,\\
	u_{\e,r}& =\tilde{f}_r &\qquad\text{ on }\partial\Omega,
	\end{aligned}
	\right.	
	\end{equation*}
	and
	\begin{equation*}
	\left\{
	\begin{aligned}
	\cL_0(u_{0,r})+\nabla p_{0,r} &= F + \tdiv(\widetilde{h}_r) &\qquad \text{ in }\Omega,\\
	\tdiv(u_{0,r}) &=\widetilde{g}_r - C_r &\qquad\text{ in }\Omega,\\
	u_{0,r}& =\widetilde{f}_r &\qquad\text{ on }\partial\Omega.
	\end{aligned}
	\right.	
	\end{equation*}
	Then it follows from Theorem \ref{thm.C1Rate} that
	\begin{align}\label{est.ur-u0}
	\begin{aligned}
	&\norm{u_{\e,r} - u_{0,r}}_{L^2(\Omega)}  + \norm{p_{\e,r} - p_{0,r} - \pi^\e \nabla u_{0,r}}_{L^2_0(\Omega)} \\
	& \qquad \le C\e^\sigma \Big(\norm{F}_{L^p(\Omega)} + \norm{\widetilde{h}_r}_{W^{1,p}(\Omega)} +  \norm{\widetilde{g}_r - C_r}_{W^{1,p}(\Omega)} + \norm{f}_{H^1(\partial\Omega)} \Big) \\
	& \qquad \le C\e^\sigma r^{(1-d)/2} \Big(\norm{F}_{L^p(\Omega)} + \norm{h}_{C^{0,\eta}(\Omega)} + \norm{g}_{C^{0,\eta}(\Omega)} + \norm{f}_{H^1(\partial\Omega)} \Big).
	\end{aligned}
	\end{align}
	
	On the other hand, it is obvious to see $u_{\e,r} - u_\e$ satisfies
	\begin{equation*}
	\left\{
	\begin{aligned}
	\cL_\e(u_{\e,r} - u_\e)+\nabla (p_{\e,r} - p_\e) &= \tdiv(\widetilde{h}_r - h) &\qquad \text{ in }\Omega,\\
	\tdiv(u_{\e,r} - u_\e) &=\widetilde{g}_r - g - C_r &\qquad\text{ in }\Omega,\\
	u_{\e,r}-u_\e& = \widetilde{f}_r - f &\qquad\text{ on }\partial\Omega,
	\end{aligned}
	\right.	
	\end{equation*}
	Then applying the energy estimate (\ref{ineq.energy}), we have
	\begin{align}\label{est.uer-ue}
	\begin{aligned}
	&\norm{u_{\e,r} - u_\e}_{H^1(\Omega)} + \norm{p_{\e,r} - p_\e}_{L^2_0(\Omega)} \\
	&\qquad \le C\Big( \norm{\widetilde{h}_r - h}_{L^2(\Omega)} + C\norm{\widetilde{g}_r - g - C_r}_{L^2(\Omega)} + \norm{\widetilde{f}_r - f}_{H^{1/2}(\partial\Omega)} \Big) \\
	&\qquad \le Cr^\eta \Big( \norm{h}_{C^{0,\eta}(\Omega)} + \norm{g}_{C^{0,\eta}(\Omega)} + \norm{f}_{H^1(\partial\Omega)} \Big).
	\end{aligned}
	\end{align}
	Similarly, we also have
	\begin{equation}\label{est.u0r-u0}
	\norm{u_{0,r} - u_0}_{H^1(\Omega)} + \norm{p_{0,r} - p_0}_{L^2_0(\Omega)} \le Cr^\eta \Big( \norm{h}_{C^{0,\eta}(\Omega)} + \norm{g}_{C^{0,\eta}(\Omega)}\Big).
	\end{equation}
	Combining this with (\ref{est.ur-u0}), we obtain
	\begin{align*}
	&\norm{u_{\e} - u_{0}}_{L^2(\Omega)}  + \norm{p_{\e} - p_{0} - \pi^\e \nabla u_{0}}_{\dot{L}^2(\Omega)} \\
	& \qquad \le C\big(\e^\sigma r^{(1-d)/2} + r^\eta \big)  \Big(\norm{F}_{L^p(\Omega)} + \norm{h}_{C^{0,\eta}(\Omega)} + \norm{g}_{C^{0,\eta}(\Omega)} + \norm{f}_{H^1(\partial\Omega)} \Big) \\
	& \qquad \le C\e^\gamma  \Big(\norm{F}_{L^p(\Omega)} + \norm{h}_{C^{0,\eta}(\Omega)} + \norm{g}_{C^{0,\eta}(\Omega)} + \norm{f}_{H^1(\partial\Omega)} \Big),
	\end{align*}
	where we have chosen $r = \e^{2\sigma/d}$ in the last inequality and hence $\gamma = 2\sigma\eta/d$. Since $\sigma \in (0,1/2)$ is arbitrary, $\gamma$ can be specified arbitrarily close to $\eta/d$.
\end{proof}
\bibliographystyle{amsplain}
\bibliography{Lib}

\medskip

\begin{flushleft}
Shu Gu\\
Department of Mathematics,\\
The Florida State University,\\
Tallahassee, FL 32306,
USA.\\
E-mail: \email{gu@math.fsu.edu}
\end{flushleft}

\begin{flushleft}
Jinping Zhuge\\
Department of Mathematics,\\
University of Kentucky,\\
Lexington, KY 40506,
USA.\\
E-mail: \email{jinping.zhuge@uky.edu}
\end{flushleft}

\medskip

\noindent \today
\end{document}